\newcolumntype{C}{>{\centering\arraybackslash}p{19mm}}
\newcolumntype{G}{>{\centering\arraybackslash}p{4mm}}
\newcolumntype{S}{>{\centering\arraybackslash\scriptsize}p{4mm}}
\renewcommand*{\ALG@name}{Alg.}
\newtheoremstyle{break}
  {}%
  {}%
  {\itshape}%
  {}%
  {\bfseries}%
  {.}%
  {\newline}%
  {}%
\theoremstyle{break}
\theoremstyle{definition}
\newtheorem{definition}{Definition}[section]
\newcommand\revFirst[1]{#1} 
\newcommand\rev[1]{\textcolor{black}{#1}}
\newcommand\mydots{\hbox to 1em{.\hss.\hss.}}
\newcommand{\comp}{\mathsf{c}} %
\newcommand{\dd}{\textrm{d}} 
\newcommand{\dH}{d_H} 
\newcommand{\hull}{\textrm{H}} 
\newcommand{\codim}{\textrm{codim}} 
\newcommand{\Ker}{\textrm{Ker}} 
\newcommand{\Int}{\textrm{Int}} 
\newcommand{\reach}{\textrm{reach}} 
\newcommand{\Span}{\textrm{Span}} 
\newcommand{\M}{\mathcal{M}}
\newcommand{\Prob}{\mathbb{P}}
\newcommand{\K}{\mathcal{K}} 
\def\Inf{\operatornamewithlimits{inf\vphantom{p}}}
\newcommand{\sff}{\mathrm{I\hspace{0mm}I}}
\newcommand{\G}{\mathcal{G}}
\newcommand{\B}{\mathcal{B}}
\newcommand{\C}{\mathcal{C}}
\newcommand{\X}{\mathcal{X}}
\newcommand{\Y}{\mathcal{Y}}
\newcommand{\U}{U}
\newcommand{\UU}{\mathcal{U}}
\newcommand{\R}{\mathbb{R}}
\newcommand{\bN}{\mathbb{N}}
\newtheoremstyle{nolinebreakstyle}
  {}%
  {}%
  {} %
  {} %
  {\bfseries} %
  {.} %
  {.2em} %
  {} %
\theoremstyle{nolinebreakstyle}
\newtheoremstyle{exampstyle}
  {1em plus .2em minus .1em}%
  {1em plus .2em minus .1em}%
  {} %
  {} %
  {\bfseries} %
  {.} %
  {.5em} %
  {} %
\theoremstyle{exampstyle}
\newtheorem{remark}{Remark}[section]
\newtheorem{theorem}{Theorem}[section]
\newtheorem{lemma}[theorem]{Lemma}
\newtheorem{corollary}[theorem]{Corollary}
\newtheorem{example}{Example}[section]
\theoremstyle{exampstyle}
\newtheorem{property}{Property}[section]
\theoremstyle{exampstyle}
\newtheorem{assumption}{Assumption}[section]
\newmdtheoremenv{theo}{Theorem}
\newcommand\blfootnote[1]{%
  \begingroup
  \renewcommand\thefootnote{}\footnote{#1}%
  \addtocounter{footnote}{-1}%
  \endgroup
}
\let\@fnsymbol\@arabic
\title{\vspace{-3mm}
Estimating the Convex Hull of the Image of a Set with Smooth Boundary: %
Error Bounds and Applications}
\author{Thomas Lew\thanks{\rev{Toyota Research Institute.}} \thanks{Department of Aeronautics and Astronautics, Stanford University. 
}, \ 
Riccardo Bonalli\thanks{Laboratory of Signals and Systems, University of Paris-Saclay, CNRS, CentraleSup\'{e}lec.}, \ 
Lucas Janson\thanks{Department of Statistics, Harvard University.}, \  
Marco Pavone${}^2$}
\date{}
\begin{document}
\maketitle

\begin{abstract}
We study the problem of estimating the convex hull of the image $f(X)\subset\mathbb{R}^n$ of a compact set $X\subset\mathbb{R}^m$ with smooth boundary through a smooth function $f:\mathbb{R}^m\to\mathbb{R}^n$. Assuming that $f$ is a submersion, we derive a new bound on the Hausdorff distance between the convex hull of $f(X)$ and the convex hull of the images $f(x_i)$ of $M$ sampled inputs $x_i$ on the boundary of $X$. When applied to the problem of geometric inference from a random sample, our results give \revFirst{error bounds that are} tighter and more general \revFirst{than in previous work}. We present applications to the problems of robust optimization, of reachability analysis of dynamical systems, and of robust trajectory optimization under bounded uncertainty. 
\end{abstract}

{
\small
\hspace{3.5mm}\textbf{Keywords:}
convex hull, geometric inference, manifold reconstruction, reach
}%
\blfootnote{\textbf{Acknowledgements}: 
\rev{The authors thank the anynomous reviewers for their helpful feedback.} The NASA University Leadership Initiative
(grant \#80NSSC20M0163) provided funds to assist the authors with their research, but this article solely reflects the opinions and conclusions of its authors and not any NASA entity.  \revFirst{Toyota Research Institute provided funds to support this work.} L.J. was supported by the National Science Foundation via grant CBET-2112085. }

\section{Introduction}\label{sec:problem_formulation}
Let $\X$ be a compact subset of $\R^m$,   
$f:\R^m\rightarrow\R^n$ be a continuous map, 
$\Y=f(\X)$, and $\hull(\Y)$ be the convex hull of $\Y$. 
Given $M$ inputs $x_i$ sampled from $\X$, we study bounds on the Hausdorff distance 
$\dH(\hull(\Y),\hull(\{f(x_i)\}_{i{=}1}^M))$ 
between the convex hull of $\Y$ and the convex hull of the outputs $f(x_i)$.

Convex hull reconstructions from samples have shown to be surprisingly accurate in complicated settings (e.g., $f$ characterizes a dynamical system parameterized by a neural network \cite{LewJansonEtAl2022}). However, deriving tight error bounds that match empirical results remains an open problem. 
D\"{u}mbgen and Walther \cite{Dumbgen1996} showed that sets $\Y$ that are convex and have a smooth boundary can be accurately estimated using the convex hull of a sample on the boundary of $\Y$.  However, even if the boundary of $\X$ and the map $f$ are smooth, the boundary of $\Y=f(\X)$ may not be smooth, e.g., the boundary of $\Y$ may self-intersect (see Example \ref{ex:intersection}). %
Thus, it is reasonable to ask: 
Can we derive similar tight error bounds for the estimation of the convex hull of a non-convex set $\Y=f(\X)$ under suitable assumptions on $\X$ and $f$? 
\textbf{Applications}. 
Set reconstruction techniques have found a plethora of applications such as in ecology \cite{DeHaan1994,Cholaquidis2016}, geography \cite{Rodriguez2016}, 
anomaly detection \cite{Devroye1980}, 
data visualization \cite{RayChaudhuri2004}, and astronomy \cite{Jang2007}. 
In many applications, reconstructing the convex hull of the set of interest suffices. 
For instance, verifying that a dynamical system satisfies convex constraints (e.g., a drone avoids obstacles for any given payload $x\in\X$) amounts to estimating the convex hull of the  set $f(\X)$ of all reachable states of the system at a given time in the future \cite{LewEtAl2022,LewJansonEtAl2022,Everett21_journal}, with applications to robust model predictive control \cite{Schurmann2018,Sieber2022}. 
In robust optimization of programs with constraints that must be satisfied for a bounded range $\X$ of parameters \cite{Bertsimas2011}, many problems can be reformulated using the convex hull of the uncertain parameters \cite{BenTal1998,Leyffer2020} or of their image (see Section \ref{sec:applications:optimization}). %
When the map $f$ is complicated %
and directly computing $\Y=f(\X)$ is intractable, one may resort to an approximation from sampled outputs $f(x_i)$ instead. 
This approach has the advantages of being problem-agnostic, simple to implement, and computationally efficient for problems of relatively small dimensionality $m$. For instance, in reachability analysis of feedback control loops, this approach can be an order of magnitude faster and more accurate than alternative approaches \cite{LewJansonEtAl2022}. 
However, deriving tight error bounds matching empirical results remains an open problem.

\textbf{Related work}. 
The literature studies the accuracy of different set estimators including 
union\rev{s} of balls \cite{Devroye1980,Baillo2001}, 
$r$-convex hulls \cite{Rodriguez2016,AriasCastro2019},
Delaunay complexes \cite{Boissonnat2013,AamariPhD2017,Aamari2018,Boissonnat2018}, 
and kernel-based estimators \cite{DeVito2014,Rudi2017}. 
If the set to reconstruct is convex, taking the convex hull of a sample yields an estimator with accuracy guarantees \revFirst{in Hausdorff distance 
\cite{Schneider1987,Schneider1988,Dumbgen1996,Braker1998}, volume 
\cite{RipleyPoissonForest1977,Schneider1988}, and mean width \cite{Schneider1987}, see \cite[Chapter 8.2]{Schneider2008} for a review.} However, previous works do not study the problem of estimating the convex hull of non-convex sets. %
Deriving finite-sample error bounds requires making geometric regularity assumptions on the set of interest. 
One such assumption is that the reach \cite{Federer1959} of the set to reconstruct is strictly positive \cite{Cuevas2009,AamariPhD2017,Aamari2018,Aamari2019,Aamari2022}, see Section \ref{sec:smooth_sets:smoothness_defs}. 
Intuitively, a submanifold of reach $R>0$ has a curvature bounded by $1/R$ (see Lemma \ref{lem:2nd_fund_form}) and cannot curve too much onto itself, which limits the minimal size of bottleneck structures \cite{AamariPhD2017,Aamari2019,Berenfeld2021} and guarantees the absence of self-intersections. 
Manifolds of positive reach admit tight bounds on the variation of tangent spaces at different points \cite{Niyogi2008,Boissonnat2019}, which allows deriving tight error bounds for sample-based reconstructions \cite{AamariPhD2017}.

A challenge in applying previous analysis techniques to the estimation of the convex hull of $\Y=f(\X)$ is that the reach of $\Y$ may be zero in many problems of interest, including in problems where both $f$ and the boundary of $\X$ are very regular. %
For instance, in Example \ref{ex:intersection}, the reach of $\Y$ is zero ($\Y$ self-intersects) although the reach of $\X$ is strictly positive and $f$ is a local diffeomorphism. 
Requiring that $f$ is a diffeomorphism over $\X$ suffices to ensure that the reach of $\Y$ is strictly positive (see Lemma \ref{lemma:reach:diffeo} \cite{Federer1959}), but is an unnecessarily strong assumption that does not allow considering interesting problems with a larger number of inputs than outputs ($f:\R^m\to\R^n$ with $m>n$) such as in the case of reachability analysis of uncertain dynamical systems, see Section \ref{sec:applications}.    
Instead of relying on additional assumptions on $\Y$ or on its convex hull, we seek error bounds that are broadly applicable and that only depend on assumptions on $f$ and $\X$ that can be verified. 

\textbf{Contributions}. We derive new error bounds for reconstructing the convex hull of the image $\Y=f(\X)$ of a set $\X$. The set $\Y$ may be non-convex and its boundary may self-intersect. 
Our results rely on the smoothness of $f$ and of the boundary of $\X$, and on the surjectivity of the differential of $f$, denoted by $\dd f$. 
Our main result is stated below. 

\vspace{-3mm}

\begin{theorem}[Estimation error for the convex hull of $f(\X)$]
\label{thm:error_bound:smooth_boundary}
Let $r,\delta>0$, $\X\subset\R^m$ be a non-empty path-connected compact set that is $r$-smooth (see Definition \ref{def:rsmooth}), 
$f:\R^m\to\R^n$, 
$\Y=f(\X)$, 
and $Z_\delta=\{x_i\}_{i=1}^M\subset\partial\X$ be a $\delta$-cover of the boundary $\partial\X$.  
If $f$ is a $C^1$ submersion such that $(f,\dd f)$ are $(\bar{L},\bar{H})$-Lipschitz, then
\vspace{-3mm}
\begin{equation}\label{eq:thm:error_bound}
\dH(\hull(\Y),\hull(f(Z_\delta)))\leq \frac{1}{2}\left(\frac{\bar{L}}{r}+\bar{H}\right)\delta^2.
\end{equation}
\end{theorem}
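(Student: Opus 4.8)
The plan is to reduce the Hausdorff distance to a uniform bound over supporting directions and then exploit first-order optimality together with the $r$-smoothness of $\X$. Since $f(Z_\delta)\subseteq\Y$ we have $\hull(f(Z_\delta))\subseteq\hull(\Y)$, so only the one-sided distance survives, and writing support functions $h_K(u)=\sup_{x\in K}\langle u,x\rangle$ the Hausdorff distance equals $\sup_{\|u\|=1}\big(h_{\hull(\Y)}(u)-h_{\hull(f(Z_\delta))}(u)\big)$. Because $h_{\hull(\Y)}(u)=\sup_{x\in\X}\langle u,f(x)\rangle$ and $h_{\hull(f(Z_\delta))}(u)=\max_i\langle u,f(x_i)\rangle$, it suffices to fix a unit vector $u$, set $g_u(x)=\langle u,f(x)\rangle$, let $x^*$ maximize $g_u$ over $\X$, and bound $g_u(x^*)-g_u(x_i)$ for a nearby sample $x_i$.

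Next I would show $x^*\in\partial\X$. The gradient is $\nabla g_u(x)=\dd f(x)^{\!\top}u$; since $f$ is a submersion, $\dd f(x)$ has full row rank, so $\dd f(x)^{\!\top}$ is injective and $\nabla g_u(x)\neq 0$ for every $x$ (as $\|u\|=1$). Hence $g_u$ has no interior critical point, and its maximum over the compact set $\X$ is attained on $\partial\X$. As $Z_\delta$ is a $\delta$-cover of $\partial\X$, there is a sample $x_i$ with $\|x^*-x_i\|\le\delta$, and the target reduces to showing $g_u(x^*)-g_u(x_i)\le\frac12(\bar{L}/r+\bar{H})\delta^2$.

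I would then Taylor-expand $f$ about $x^*$. Writing $f(x_i)=f(x^*)+\dd f(x^*)(x_i-x^*)+R$ with $\|R\|\le\frac12\bar{H}\|x_i-x^*\|^2$ (from the integral remainder and the $\bar{H}$-Lipschitz continuity of $\dd f$), one obtains
\begin{equation}
g_u(x^*)-g_u(x_i)=\langle\nabla g_u(x^*),\,x^*-x_i\rangle-\langle u,R\rangle ,
\end{equation}
where $|\langle u,R\rangle|\le\frac12\bar{H}\delta^2$ disposes of the quadratic term. For the linear term, first-order optimality of $x^*$ on the boundary $\partial\X$ (which is $C^{1,1}$ by $r$-smoothness) forces $\nabla g_u(x^*)=\lambda\nu$ with $\nu$ the outward unit normal and $\lambda\ge 0$; moreover $\lambda=\|\dd f(x^*)^{\!\top}u\|\le\|\dd f(x^*)\|_{\mathrm{op}}\le\bar{L}$, using that an $\bar{L}$-Lipschitz $C^1$ map has $\|\dd f\|_{\mathrm{op}}\le\bar{L}$. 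The crux is the $r$-smoothness of $\X$ (Definition \ref{def:rsmooth}): the interior ball $B(x^*-r\nu,r)\subseteq\X$ touches $\partial\X$ at $x^*$, and since $x_i\in\partial\X$ lies outside its interior, expanding $\|x_i-(x^*-r\nu)\|\ge r$ yields $\langle\nu,x_i-x^*\rangle\ge-\|x_i-x^*\|^2/(2r)\ge-\delta^2/(2r)$. Therefore $\langle\nabla g_u(x^*),x^*-x_i\rangle=-\lambda\langle\nu,x_i-x^*\rangle\le\lambda\,\delta^2/(2r)\le\bar{L}\,\delta^2/(2r)$, and adding the two contributions gives $g_u(x^*)-g_u(x_i)\le\frac12(\bar{L}/r+\bar{H})\delta^2$; taking the supremum over $u$ closes the argument.

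I expect the main obstacle to be the rigorous justification of the first-order optimality/normal decomposition together with the geometric estimate on $\langle\nu,x_i-x^*\rangle$: everything rests on converting optimality of $x^*$ (a tangential condition) plus the rolling-ball condition encoded in $r$-smoothness into the normal displacement bound $\delta^2/(2r)$, which is precisely what upgrades the naive $O(\delta)$ Lipschitz estimate to the $O(\delta^2)$ rate. Some care is also needed to ensure $\partial\X$ has a well-defined normal at $x^*$ (guaranteed by $r$-smoothness) and that the sign $\lambda\ge 0$ is correct for a maximizer.
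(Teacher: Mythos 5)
Your proof is correct, and it reaches the bound by a genuinely different route from the paper's. Both arguments reduce the Hausdorff distance to support functions and both balance the same two error terms (a normal displacement of order $\delta^2/(2r)$ on $\partial\X$ multiplied by $\bar{L}$, plus the Taylor remainder $\bar{H}\delta^2/2$), but the cancellation of the first-order term is justified very differently. The paper first proves that $\partial\hull(\Y)$ is an $(n-1)$-dimensional submanifold (Theorem \ref{thm:submersion_rsmooth}, Corollary \ref{cor:sub_hull:boundary_manifold}), reduces the Hausdorff distance to distances to its tangent spaces (Lemma \ref{lem:tangent_implies_hausdorff}), and then invokes the rank theorem to show $\dd f_x(T_x\partial\X)=T_y\partial\hull(\Y)$ (Lemma \ref{lem:tangent_mapped_to_tangent_hull}), so that the tangential component of $z-x$ is annihilated by the projection onto $N_y\partial\hull(\Y)$. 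You instead fix a supporting direction $u$, take $x^*$ maximizing $\langle u,f(\cdot)\rangle$ over $\X$, and obtain the same annihilation from first-order optimality: $\dd f(x^*)^{\top}u=\lambda\nu$ with $0\le\lambda\le\bar{L}$, which is precisely the one inclusion $\dd f_{x^*}(T_{x^*}\partial\X)\subseteq u^{\perp}$ that the bound actually needs, obtained without the rank theorem and without ever establishing that $\partial\hull(\Y)$ is a manifold; your interior-ball computation likewise replaces the appeal to Federer's Theorem \ref{thm:reach} with the one-sided estimate $\langle\nu,x_i-x^*\rangle\ge-\delta^2/(2r)$, which suffices. The trade-off is that your argument is shorter and more elementary for Theorem \ref{thm:error_bound:smooth_boundary} itself, while the paper's route yields structural results ($R$-smoothness of $\hull(\Y)$, the tangent-space identity) that are reused elsewhere. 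In a final write-up you should make two points explicit, both supplied by Theorem \ref{thm:walther1999}: the outward unit normal $\nu$ at $x^*$ exists and is unique, and the interior tangent ball necessarily has center $x^*-r\nu$; the sign $\lambda\ge 0$ then follows by differentiating along the feasible direction $-\nu$, which stays inside that ball.
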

Submersions form a large class of functions, see Section \ref{sec:applications} for examples.  
If we apply this result to the special case where $\X$ is convex and $f(x)=x$ (so that $\Y=\X$ is convex and $\bar{L}/r+\bar{H}=1/r$), we obtain $\dH(\X,\hull(Z_\delta))\leq \delta^2/(2r)$, which is $2\times$ tighter than the bound in \cite[Theorem 1]{Dumbgen1996}, see Section \ref{sec:error_bounds_dumbgen_convex}. 
\rev{Thus, Theorem \ref{thm:error_bound:smooth_boundary} tightens the bound in \cite[Theorem 1]{Dumbgen1996} by a factor of $2$ and extends it to the reconstruction of convex hulls of images of non-convex sets with smooth boundary under submersions.} 
\rev{Further discussion on the error bound is provided} in Section \ref{sec:thm:error_bound:smooth_boundary:discussion}.

\textbf{Consequences of Theorem \ref{thm:error_bound:smooth_boundary}}. The derivation of this result is motivated by applications:
\begin{enumerate}
    \item \textit{Geometric inference} (Section \ref{sec:applications:random_samples}): The convex hull of the image $f(\X)$ of sets $\X$ with smooth boundary can be accurately approximated using inputs $x_i$ sampled from a distribution supported on the boundary of $\X$. Theorem \ref{thm:error_bound:smooth_boundary} gives tighter and more general high-probability error bounds (Corollar\revFirst{ies \ref{cor:conservative_finite_sample} and \ref{cor:conservative_asymptotic}}) than prior work \cite{Dumbgen1996,LewJansonEtAl2022}.
    
    \item \textit{Robustness analysis of dynamical systems} (Section \ref{sec:applications:reachability}): Theorem \ref{thm:error_bound:smooth_boundary} justifies approximating convex hulls of reachable sets of dynamical systems from a finite sample (Corollary \ref{cor:reachability_analysis}).  
    Such sampling-based approaches can be used to quickly verify properties of complex systems (e.g., checking that a dynamical system controlled by a neural network satisfies constraints) but previous error bounds do not explain promising empirical results \cite{LewJansonEtAl2022}. 
    As Theorem \ref{thm:error_bound:smooth_boundary} applies to submersions, it applies to systems with a larger number of uncertain parameters than reachable states 
(e.g., characterizing the reachable set of a drone transporting a payload of uncertain mass for a given set of initial states).
       
    \item \textit{Robust programming} (Section \ref{sec:applications:optimization}), \textit{planning, and control} (Section \ref{sec:applications:robust_planning}): The numerical resolution of non-convex optimization problems with constraints that should be satisfied for a range of parameters (e.g., for all parameters in a ball of radius $r$) remains challenging. Theorem \ref{thm:error_bound:smooth_boundary} implies that sampling constraints can yield feasible relaxations %
    of a class of robust programs (Corollary \ref{cor:robust_programming}), with applications to robust planning and controller design (Section \ref{sec:applications:robust_planning}). 
\end{enumerate}

\textbf{Sketch of proof}. 
To prove Theorem 1.1, we express the approximation error as a function of distances between sampled outputs $f(x_i)$ and tangent spaces of the boundary of $\hull(\Y)$, and exploit the smoothness of $f$ and of the boundary of $\X$. 
Deriving this result is complicated by the absence of a smooth manifold structure for the output set boundary $\partial\Y$ that precludes the direct application of tools from differential
geometry to $\Y$, since its boundary $\partial\Y$ may self-intersect, %
see Example \ref{ex:intersection}. Our analysis relies on three steps:
\begin{enumerate}
\item Proving that the boundary of the convex hull $\hull(\Y)$ is smooth  under suitable assumptions.

\begin{theorem}[\rev{The convex hull of $f(\X)$ has a smooth boundary}]\label{thm:submersion_rsmooth}
\rev{Let $r>0$, $\X\subset\R^m$ be a non-empty compact set \revFirst{such that a ball of radius $r$ rolls freely in $\X$} (see Definition \ref{def:rolling_ball}), 
$f:\R^m\to\R^n$ be a $C^1$ submersion such that $(f,\dd f)$ are Lipschitz, and $\Y=f(\X)$. 
Then, for some $R>0$, the convex hull $\hull(\Y)$ is $R$-smooth.}
\end{theorem}
In particular, the boundary $\partial\hull(\Y)$ is an $(n-1)$-dimensional submanifold \rev{(Corollary \ref{cor:sub_hull:boundary_manifold})}
and the tangent spaces $T_y\partial\hull(\Y)$ at points $y\in\partial\hull(\Y)$ are well-defined. %
\item Relating bounds on distances to the convex hull tangent spaces $T_y\partial\hull(\Y)$ %
at boundary outputs $y$ %
to bounds on the Hausdorff distance error of convex hull approximations (Lemma \ref{lem:tangent_implies_hausdorff}).  
%
%
%
\item Deriving a bound %
on distances to the tangent spaces %
$T_y\partial\hull(\Y)$ %
(Lemma \ref{lem:bound_delta_Tphull}). 
This bound relies on showing that tangent spaces are mapped to tangent spaces ($\dd f_x(T_x\partial\X)=T_y\partial\hull(\Y)$) for particular choices of inputs $x$ and outputs $y$ using the rank theorem (Lemma  \ref{lem:tangent_mapped_to_tangent_hull}), and subsequently studying images of inputs using the smoothness of $f$ and $\partial\X$ and by decomposing %
components that are tangential and normal to the tangent spaces $T_y\partial\hull(\Y)$.  
\end{enumerate}
This approach gives an error bound (Theorem \ref{thm:error_bound:smooth_boundary}) that is tighter than a bound more easily derived by assuming that $f$ is a diffeomorphism (Lemma \ref{lem:bound_delta_Tphull:diffeo}), which allows exploiting the smoothness of the boundary of $\Y$ (Corollary \ref{cor:rsmooth:diffeo}) but is a more restrictive assumption, %
see Section \ref{sec:error_bounds:diffeo}. It is also tighter than %
a bound of the form 
$\dH(\hull(\Y),\hull(f(Z_\delta)))\leq\bar{L}\delta$ from a naive covering argument (Lemma \ref{lem:error_bound:covering_lipschitz}).

\textbf{Outline}. In \textbf{Section \ref{sec:notations_background}}, we \rev{introduce} notations, different notions of geometric regularity, and review connections between these concepts. 
In \textbf{Section \ref{sec:smoothness_output}}, we study the structure of the convex hull $\hull(\Y)$ \rev{and prove Theorem \ref{thm:submersion_rsmooth}}. 
In \textbf{Section \ref{sec:error_bounds}}, we derive error bounds 
and prove Theorem \ref{thm:error_bound:smooth_boundary}. 
In \textbf{Section \ref{sec:applications}}, we provide  applications. 
In \textbf{Section \ref{sec:conclusion}}, we conclude and discuss future research directions. 
For conciseness, the proofs of various intermediate results are provided in the appendix.

\section{Notations and background}\label{sec:notations_background}
\textbf{Notations}. 
We denote by $a^\top b$ the Euclidean inner product of $a,b\in\R^n$, 
by $\|a\|$ the Euclidean norm of $a\in\R^n$, 
by $\K$ the family of non-empty compact subsets of $\R^n$, 
by $\B(\R^n)$ the Borel $\sigma$-algebra for the Euclidean topology on $\R^n$ associated to $\|\cdot\|$, 
%
%
by $B(x,r)=\{y\in\R^n: \|y-x\|\leq r\}$ the closed ball of center $x\in\R^n$ and radius $r\geq 0$, and 
by $\mathring{B}(x,r)$ the open ball. 
Given $A,B\subset\R^n$ and $c\in\R$, we denote by 
$\Int(A)$, $\overline{A}$, $\partial A=\overline{A}\setminus\Int(A)$, and $A^\comp=\R^n\setminus A$ the interior, closure, boundary, and complement of $A$, 
$cA=\{c a: a\in A\}$, 
by $A+B=\{a+b: a\in A, b\in B\}$ and $A-B=(A^\comp+(-B))^\comp$ the Minkowski addition and difference, 
by $\hull(A)$ the convex hull of $A$, 
by $d_A(x)=\inf_{a\in A} \|x-a\|$ the distance from $x\in\R^n$ to $A$, and by $\dH(A,B)
=
\max(
\sup_{x\in A}
d_B(x), 
\sup_{y\in B}
d_A(y)
)$ the Hausdorff distance between $A,B\in\K$.

\textbf{Differential geometry}. 
Let $\M\subseteq\R^n$ be a $k$-dimensional submanifold without boundary. 
Equipped with the induced metric from the ambient Euclidean norm $\|\cdot\|$, $\M$ is a Riemannian submanifold. The geodesic distance on $\M$ between $x,y\in\M$ is denoted by 
$d^{\M}(x,y)$. For any $p\in\M$, $T_p\M$ and $N_p\M$ denote the tangent and normal spaces of $\M$ \cite{Lee2012}, respectively, which we view as linear subspaces of $\R^n$.  
For any $p\in\M$, the second fundamental form of $\M$ is denoted as $\sff^{\M}_p:T_p\M\times T_p\M\to N_p\M$ and describes the curvature of $\M$ \cite{Lee2018}.

Given a map $f:\R^m\to\R^n$ and $x,v,w\in\R^n$, $\dd f_x$ and $\dd^2 f_x$ denote the first- and second-order differentials of $f$ at $x$, with $\dd f_x(v)=\sum_{i=1}^m\frac{\partial f}{\partial x_i}(x)v_i$ and $\dd^2 f_x(v,w)=\sum_{i=1}^m\sum_{j=1}^m\frac{\partial^2 f}{\partial x_i \partial x_j}(x)v_iw_j$, and 
 $\Ker(\dd f_x)=\{v\in T_x\R^m:\dd f_x(v)=0\}$ denotes the kernel of $\dd f_x$. A differentiable map $f$ is a submersion if $\dd f_x:T_x\R^m\to T_{f(x)}\R^n$ is surjective for all $x\in\R^m$, and a diffeomorphism if it is a bijection and its inverse is differentiable.

\subsection{Geometric regularity: reach, $R$-convexity, rolling balls, and $R$-smoothness}\label{sec:smooth_sets:smoothness_defs}
We introduce different important notions of geometric smoothness.


\begin{definition}[Reach]
\rev{Let $\X\subseteq\R^n$ be a closed set.} The \textit{reach} of $\X$ is defined as 
$\reach(\X)=\inf_{x\in\X} d(x,\text{Med}(\X))$, 
where the medial axis of $\X$, denoted as $\text{Med}(\X)$, is the set of points that have at least two nearest neighbors on $\X$. %
\end{definition}
For a convex set $\X$, $\text{Med}(\X)=\emptyset$ and $\reach(\X)=\infty$.
\begin{definition}[$R$-convexity] 
Let $R>0$ and $\X\subset\R^n$. We say that $\X$ is \textit{$R$-convex} %
if  
$\X=\bigcap_{\{\rev{x:\,\mathring{B}(x,R)\cap\X=\emptyset\}}}\left(\mathring{B}(x,R))\right)^\comp$. 
\end{definition}
\begin{definition}[Rolling ball]\label{def:rolling_ball}
Let $R>0$ and $\X\subset\R^n$ be a non-empty closed set. We say that \textit{a ball of radius $R$ rolls freely in $\X$} if for any $x\in\partial\X$, there exists $a\in\X$ such that $x\in B(a,R)\subseteq\X$.
\end{definition}
%

\begin{definition}[$R$-smooth set]\label{def:rsmooth}
Let $R\geq 0$ and $\X\subset\R^n$ be a non-empty closed set. We say that $\X$ is $R$-\textit{smooth} if a ball of radius $R$ rolls freely in $\X$ and in  $\overline{\X^{\comp}}$.  
Specifically, for any $x\in\partial\X$, there exists $a\in\X$ and $\bar{a}\in\overline{\X^{\comp}}$ such that $x\in B(a,R)\subseteq\X$ and $x\in B(\bar{a},R)\subseteq\overline{\X^{\comp}}$
\end{definition}

\begin{figure}[t]
  \hspace{0.03\linewidth}
  \begin{subfigure}{0.45\linewidth}
  \centering
    \includegraphics[width=0.7\linewidth]{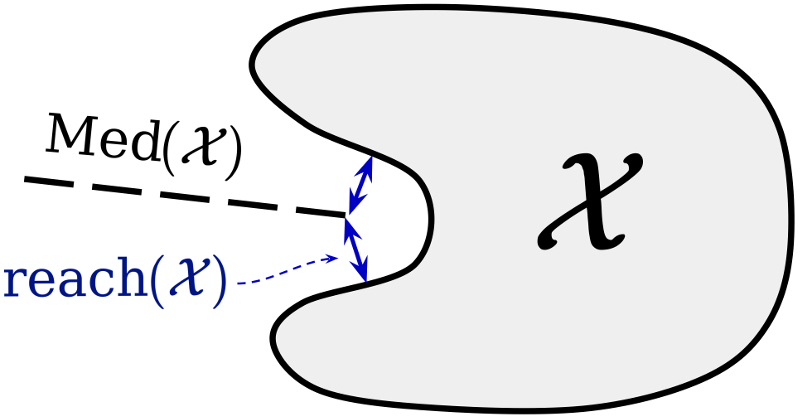}
    \caption{}
  \end{subfigure}
  \begin{subfigure}{0.45\linewidth}
  \centering
    \includegraphics[width=0.63\linewidth]{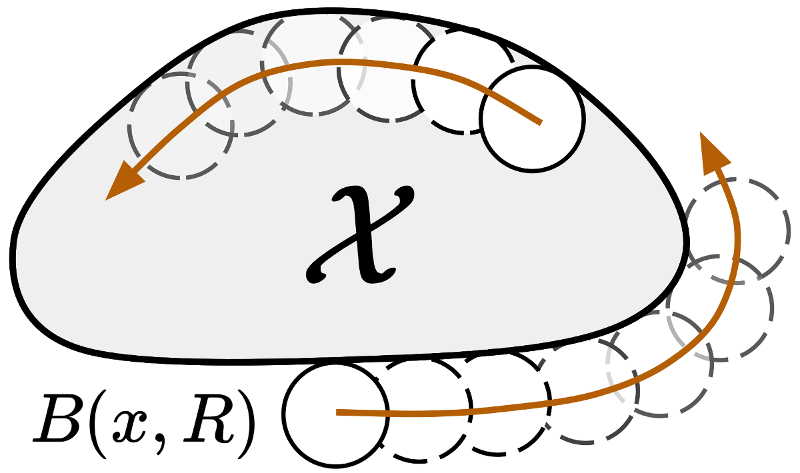}
      \caption{}
  \end{subfigure}
  \caption[Images]{(a) Reach and medial axis. (b) A ball of radius $R$ rolls freely in $\X$ and in $\overline{\X^{\comp}}$, so $\X$ is $R$-smooth. }
  \label{fig:geometric_smoothness}
\end{figure}

A submanifold $\M\subset\R^n$ of reach $R>0$ has a curvature bounded by $1/R$ (Lemma \ref{lem:2nd_fund_form}) and a tubular neighborhood \cite{Lee2018} of radius $R$. A set $\X$ is $R$-convex if it is the intersection of complements of balls of radius $R$ \cite{Cuevas2009}. Thus, $R$-convexity generalizes the notion of convexity, since convex sets $\X$ can be expressed as intersections of halfspaces containing $\X$. %
The rolling ball condition \cite{Walther1997,Walther1999} is an intuitive notion that we use to define $R$-smooth sets $\X$, for which it is possible to roll a ball of radius $R$ both inside and outside $\X$. 
We will use these four different concepts to derive error bounds. %

\subsection{Equivalences and connections between definitions}\label{sec:smooth_sets:blaschke}

A key result is the following generalization of Blaschke's Rolling Theorem \cite{Walther1999}. 
\begin{theorem} \label{thm:walther1999}\cite{Walther1999}
Let $\X\subset\R^n$ be a non-empty path-connected compact set and $R>0$. Then, the following are equivalent:
\begin{enumerate}
\item $\X=(\X+ B(0,\lambda))- B(0,\lambda)$ for all $\lambda\in [0,R)$ 
and 
$\X=(\X- B(0,\lambda))+ B(0,\lambda)$ 
 for all $\lambda\in [0,R]$.
\item $\X$ and $\overline{\X^{\comp}}$ are $R$-convex and $\Int(\X)\neq\emptyset$.
\item $\X$ is $\lambda$-smooth for all $0\leq\lambda\leq R$. 
\item $\partial\X$ is an $(n-1)$-dimensional submanifold in $\R^n$ with the outward-pointing unit-norm normal $n(x)$ at $x\in\partial\X$ satisfying 
$
\|n(x)-n(y)\|\leq\frac{1}{R}\|x-y\|$ for all $x,y\in\partial\X
$. 
\end{enumerate}
\end{theorem}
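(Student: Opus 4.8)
The plan is to prove the four statements equivalent through a cycle of implications centered on (3), using the morphological (Minkowski) opening and closing operations as the connective tissue, since they encode the rolling-ball condition algebraically. The cleanest route is to establish $(1)\Leftrightarrow(3)$ and $(2)\Leftrightarrow(3)$ directly from the definitions, and then to treat the harder equivalence $(3)\Leftrightarrow(4)$ by a local differential-geometric argument that is promoted to a global statement via compactness and path-connectedness.

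First I would dispatch $(1)\Leftrightarrow(3)$. The key observation is that the opening $(\X-B(0,\lambda))+B(0,\lambda)$ equals the union of all closed balls of radius $\lambda$ contained in $\X$, so the identity $\X=(\X-B(0,\lambda))+B(0,\lambda)$ holds exactly when every point of $\X$—in particular every boundary point—lies in some inscribed ball of radius $\lambda$, i.e., when a ball of radius $\lambda$ rolls freely in $\X$. Dually, the closing $(\X+B(0,\lambda))-B(0,\lambda)$ is the complement of the union of all radius-$\lambda$ balls in $\overline{\X^\comp}$, so the other identity in (1) encodes the rolling-ball condition in $\overline{\X^\comp}$; running this over all admissible $\lambda$ yields $\lambda$-smoothness for all $\lambda\le R$, which is (3). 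For $(2)\Leftrightarrow(3)$ I would use that $R$-convexity of $\X$ is equivalent to the existence, at each boundary point, of an outscribed ball of radius $R$ lying in $\overline{\X^\comp}$—the defining balls in the $R$-convexity intersection are precisely these—hence to the rolling-ball condition in $\overline{\X^\comp}$; symmetrically, $R$-convexity of $\overline{\X^\comp}$ is the rolling-ball condition in $\X$. The hypothesis $\Int(\X)\neq\emptyset$ rules out the degenerate case of empty interior, ensuring the boundary is genuinely $(n-1)$-dimensional.

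The substantive step is $(3)\Leftrightarrow(4)$, a non-convex version of Blaschke's rolling theorem. For $(3)\Rightarrow(4)$, I would fix $x\in\partial\X$ and use the inscribed and outscribed balls $B(a,R)\subseteq\X$ and $B(\bar a,R)\subseteq\overline{\X^\comp}$ tangent at $x$: these two spheres are internally tangent at $x$ and squeeze $\partial\X$ into the thin region between them, which forces a unique supporting hyperplane and hence a well-defined unit normal $n(x)=(x-a)/R$. A local graph representation then follows from the implicit function theorem once the boundary is shown to be locally Lipschitz between the two spheres, giving the $(n-1)$-submanifold structure. The Lipschitz bound $\|n(x)-n(y)\|\le\frac1R\|x-y\|$ I would obtain from a direct geometric estimate: the centers $a=x-Rn(x)$ and $a'=y-Rn(y)$ of the inscribed balls at $x$ and $y$ satisfy $B(a,R),B(a',R)\subseteq\X$ with $x,y$ on their boundaries, and expanding $\|a-a'\|^2$ together with the constraint that neither ball contains the other's touching point yields the stated contraction of the Gauss map. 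Conversely, for $(4)\Rightarrow(3)$, the normal map being $\frac1R$-Lipschitz is equivalent to the second fundamental form being bounded by $1/R$ in norm (cf. Lemma \ref{lem:2nd_fund_form}), so the osculating ball of radius $R$ on each side is locally contained in the respective region; the difficulty is global, namely ruling out that an inscribed ball of radius exactly $R$ touches $\partial\X$ at a second, far-away point.

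I expect $(3)\Leftrightarrow(4)$ to be the main obstacle, and within it the global promotion in $(4)\Rightarrow(3)$: the local equivalence between two-sided rolling balls and a curvature-bounded $C^{1,1}$ boundary is routine, but controlling the radius-$R$ ball globally is where the quantitative constant $1/R$ must be used rather than merely qualitative smoothness. I would resolve it by a continuity argument, starting from a small radius $\rho$ where containment is clear and increasing $\rho$ to $R$: any first failure would produce a second contact point whose normal, combined with the Lipschitz bound on $n$, contradicts the geometry. Here path-connectedness of $\X$ and compactness of $\partial\X$ are essential to make the argument uniform and to transfer the local containment into the global rolling-ball condition.
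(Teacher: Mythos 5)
The paper does not prove this theorem: it is imported verbatim from \cite{Walther1999} (and the remark immediately after it defers extensions to \cite{Walther1997}), so there is no in-paper proof to compare against. Judged on its own, your outline follows the standard architecture of Walther's argument --- morphological opening/closing for $(1)\Leftrightarrow(3)$, tangent exterior/interior balls for $(2)\Leftrightarrow(3)$, and a squeeze-plus-Gauss-map estimate for $(3)\Leftrightarrow(4)$ with a continuity argument for the global step --- and the second-contact-point contradiction you invoke for $(4)\Rightarrow(3)$ does close cleanly: if $B(a,\rho)\subseteq\X$ with $\rho<R$ touches $\partial\X$ at two points $x\neq y$, then $n(x)=(x-a)/\rho$ and $n(y)=(y-a)/\rho$ force $\|n(x)-n(y)\|=\|x-y\|/\rho>\|x-y\|/R$, contradicting (4).

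That said, three steps are asserted rather than proved and would need real work. First, you equate ``$\X$ equals its opening'' with the rolling-ball condition, but the former says \emph{every} point of $\X$ lies in an inscribed ball of radius $\lambda$ while the latter only concerns boundary points; the implication from boundary points to interior points is not automatic (think of how an interior point near a thin neck is covered) and is one of the places where path-connectedness and the full range of radii enter. Relatedly, you never address the deliberate asymmetry in statement (1) --- the closing identity is required only for $\lambda\in[0,R)$ while the opening identity is required for $\lambda\in[0,R]$ --- and a symmetric treatment ``over all admissible $\lambda$'' either proves too much or too little at the endpoint. Second, your passage from $R$-convexity to tangent balls at boundary points needs a limiting argument (the defining balls in the $R$-convexity intersection separate \emph{exterior} points, and one must extract tangent balls at boundary points and, conversely, separate far-away exterior points using only boundary tangency plus a projection argument). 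Third, in $(3)\Rightarrow(4)$ the appeal to the implicit function theorem is misplaced: there is no ambient $C^1$ defining function available at that stage. The correct route is that the two tangent balls squeeze $\partial\X$ between two spheres osculating at $x$, so $\partial\X$ is locally a graph $u$ over $T_x\partial\X$ with $|u(h)|\leq\|h\|^2/(2R)+o(\|h\|^2)$, and differentiability of $u$ with Lipschitz gradient is then deduced from the uniqueness and continuity of the normal field --- the submanifold structure is a conclusion of the squeeze, not an input to it.
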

The path-connectedness assumption on $\X$ can be replaced by assumptions on path-connected components of $\X$, see \cite{Walther1997}; we do not study such extensions in this work.

The next lemmas gather known results in the literature that are used in our subsequent proofs. Results that are not cited are not explicitly stated in the literature and are proved in Section  \ref{sec:smooth_sets:proofs}.

\begin{lemma}
\label{lem:roll_implies_reach}\label{lem:roll_implies_reach_boundary}
\cite[Lemmas A.0.6 and A.0.7]{PateiroPhD2008}
Let $\X\subset\R^n$ be a non-empty closed set and $R>0$. Assume that $\X$ is $R$-smooth. %
Then, $\reach(\X)\geq R$, $\reach(\overline{\X^{\comp}})\geq R$, and $\reach(\partial\X)\geq R$.
\end{lemma}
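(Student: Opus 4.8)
The plan is to reduce all three inequalities to a single geometric fact: if a ball of radius $R$ rolls freely in $\overline{\X^{\comp}}$, then every point $y$ with $0<d(y,\X)<R$ has a \emph{unique} nearest point in $\X$. Granting this, $\reach(\X)\geq R$ follows directly from the medial-axis definition, since no point within distance $R$ of $\X$ then lies in $\text{Med}(\X)$, so every $m\in\text{Med}(\X)$ satisfies $\|x-m\|\geq d(m,\X)\geq R$ for all $x\in\X$, whence $\reach(\X)=\inf_{x\in\X}d(x,\text{Med}(\X))\geq R$. The bound $\reach(\overline{\X^{\comp}})\geq R$ follows by applying the same fact to $\overline{\X^{\comp}}$: the $R$-smoothness condition is symmetric in $\X$ and $\overline{\X^{\comp}}$ (and $\overline{(\overline{\X^{\comp}})^{\comp}}=\X$ by the regularity of $R$-smooth sets from Theorem \ref{thm:walther1999}), so a ball rolls freely in $\overline{(\overline{\X^{\comp}})^{\comp}}$ as required.

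The heart of the argument is the uniqueness claim. I would first invoke Theorem \ref{thm:walther1999} (condition 4) to obtain that $\partial\X$ is a $C^{1,1}$ submanifold with a well-defined outward unit normal $n(\cdot)$ and a one-dimensional proximal normal cone $N_{\X}(x)=\R_{\geq 0}\,n(x)$. Fix $y\notin\X$ with $t:=d(y,\X)\in(0,R)$ and a nearest point $x_1\in\partial\X$; the projection condition gives $y-x_1=t\,n(x_1)$. The rolling-ball hypothesis supplies an external tangent ball $B(\bar a_1,R)\subseteq\overline{\X^{\comp}}$ with $\bar a_1=x_1+R\,n(x_1)$ and $\mathring{B}(\bar a_1,R)\cap\X=\emptyset$. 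If a second nearest point $x_2\neq x_1$ existed, then likewise $y=x_2+t\,n(x_2)$, so $x_2-x_1=t\,(n(x_1)-n(x_2))$. Writing $n_i:=n(x_i)$ and expanding, using $x_2\notin\mathring{B}(\bar a_1,R)$, gives
\[
R^2\leq\|x_2-\bar a_1\|^2=R^2+2t\,(t-R)\,\bigl(1-n_1^\top n_2\bigr).
\]
Since $t>0$ and $t-R<0$, this forces $1-n_1^\top n_2\leq 0$, i.e.\ $n_1^\top n_2\geq 1$; as $n_1,n_2$ are unit vectors we get $n_1=n_2$ and hence $x_1=x_2$, a contradiction. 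This establishes uniqueness.

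For the last assertion $\reach(\partial\X)\geq R$, I would argue that a point $y$ with $d(y,\partial\X)<R$ falls into one of three cases. If $y\in\X^{\comp}$, its nearest point on $\partial\X$ coincides with its nearest point in $\X$, which is unique by the above; if $y\in\Int(\X)$, its nearest point on $\partial\X$ coincides with its nearest point in $\overline{\X^{\comp}}$, unique by the symmetric bound $\reach(\overline{\X^{\comp}})\geq R$; and if $y\in\partial\X$ the nearest point is $y$ itself. Hence $\text{Med}(\partial\X)$ avoids the open $R$-neighborhood of $\partial\X$, giving $\reach(\partial\X)\geq R$.

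The main obstacle, and the only place requiring care, is connecting the purely geometric rolling-ball hypothesis to the projection/normal-cone structure: specifically, that \emph{both} the nearest-point direction $y-x_1$ and the external ball's center $\bar a_1$ lie along the same well-defined outward normal $n(x_1)$. This is precisely where the $C^{1,1}$ regularity of $\partial\X$ from Theorem \ref{thm:walther1999} is essential; without it one would instead have to show directly that the external tangent ball is centered on the proximal normal ray and that this ray is one-dimensional, which is the substance of the cited results in \cite{PateiroPhD2008}.
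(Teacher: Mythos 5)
The paper does not actually prove this lemma; it is imported verbatim from \cite{PateiroPhD2008}, so there is no in-paper argument to compare against. Your proof is a sound reconstruction of the cited result: the reduction of $\reach(\X)\geq R$ to uniqueness of nearest points within distance $R$ of $\X$, the identity $\|x_2-\bar a_1\|^2=R^2+2t(t-R)\bigl(1-n_1^\top n_2\bigr)$ (which checks out), the sign argument forcing $n_1=n_2$, and the three-case treatment of $\reach(\partial\X)$ are all correct. Two remarks. First, your appeal to Theorem \ref{thm:walther1999} is technically out of scope: that theorem assumes $\X$ is path-connected and compact, whereas the lemma is stated for arbitrary non-empty closed $R$-smooth sets, so as written your argument only covers that restricted case. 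Second, the step you flag as the main obstacle --- that the nearest-point direction $y-x_1$ and the exterior ball centre $\bar a_1$ lie on the same ray --- can be closed elementarily, with no manifold structure at all: the interior ball $B(a_1,R)\subseteq\X$ and the exterior ball $B(\bar a_1,R)\subseteq\overline{\X^{\comp}}$ have disjoint interiors and both contain $x_1$ on their boundaries, so $2R\leq\|a_1-\bar a_1\|\leq\|a_1-x_1\|+\|x_1-\bar a_1\|=2R$ is saturated and the centres are antipodal through $x_1$; the same disjoint-interiors argument applied to $B(a_1,R)$ and $B(y,t)$ (whose open interior misses $\X$) places $y$ on the ray from $a_1$ through $x_1$, i.e.\ $y=x_1+t\,n(x_1)$ with $n(x_1)=(\bar a_1-x_1)/R$. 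Substituting this for the appeal to Theorem \ref{thm:walther1999} makes your proof self-contained and valid in the lemma's stated generality.
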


\begin{lemma}\label{lem:reach_implies_rconvex}\rev{\cite[Theorem 2.6, (2)]{Cotsakis2024}} %
Let $\X\subset\R^n$ be a closed set %
with $\reach(\X)\geq R>0$. Then, $\X$ is $R$-convex. 
\end{lemma}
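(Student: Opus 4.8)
The plan is to show that the defining intersection in the $R$-convexity condition collapses exactly to $\X$. Writing $\X_R=\bigcap\{(\mathring{B}(x,R))^\comp:\mathring{B}(x,R)\cap\X=\emptyset\}$ for the right-hand side of the definition, I would first record the trivial inclusion $\X\subseteq\X_R$: if $y\in\X$ then $y$ lies outside every open $R$-ball disjoint from $\X$. Hence $R$-convexity is equivalent to the reverse inclusion $\X_R\subseteq\X$, and it suffices to prove the contrapositive, namely that every $y\notin\X$ is contained in some open ball of radius $R$ disjoint from $\X$ (so that $y\notin\X_R$).

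Fix $y\notin\X$ and set $\rho=d_\X(y)>0$; since $\X$ is closed the infimum is attained at a nearest point $p\in\X$. If $\rho\ge R$, the ball $\mathring{B}(y,R)$ contains $y$ and misses $\X$, so $y\notin\X_R$ immediately. The substantive case is $0<\rho<R$. Put $u=(y-p)/\rho$, a unit vector for which $p$ is a nearest point of $\X$ to $p+\rho u=y$. The key step is the supporting-ball property of sets of positive reach: for such an outward ``normal'' direction $u$ at $p$, the point $p+tu$ has $p$ as its unique nearest point, so $d_\X(p+tu)=t$ for all $0\le t<R$. I would then take the center $c=p+Ru$. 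Since $d_\X$ is $1$-Lipschitz, continuity yields $d_\X(c)=\lim_{t\to R^-}d_\X(p+tu)=R$, so every point of $\X$ lies at distance at least $R$ from $c$ and the open ball $\mathring{B}(c,R)$ is disjoint from $\X$. Finally $\|y-c\|=\|(\rho-R)u\|=R-\rho<R$, so $y\in\mathring{B}(c,R)$, which exhibits the required ball and gives $y\notin\X_R$.

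The only nontrivial ingredient is the supporting-ball/ray-projection property invoked above, which is exactly where the reach hypothesis enters, and it is the step I expect to be the main obstacle. I would justify it from the medial-axis definition: for $0\le t<R$ the point $c_t=p+tu$ satisfies $d_\X(c_t)\le\|c_t-p\|=t<R\le\reach(\X)$, so $c_t$ cannot belong to the medial axis $\text{Med}(\X)$ and therefore has a unique nearest point, with the metric projection single-valued and continuous there. Combined with $p$ being the nearest point at $t=\rho$ and Federer's structure theory for sets of positive reach \cite{Federer1959}, the nearest point remains equal to $p$ for the whole segment $t\in[0,R)$. The delicate point is precisely this \emph{constancy} of the projection (not merely its uniqueness) for $t\in(\rho,R)$, i.e.\ verifying that the ray from $p$ in the normal direction continues to project back to $p$ until the reach is reached; this is the statement that genuinely uses the global positive-reach structure rather than local smoothness, and I would state it as a cited consequence of \cite{Federer1959} rather than re-deriving it.
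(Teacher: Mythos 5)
Your proposal is correct and follows essentially the same route as the paper's proof: both reduce $R$-convexity to exhibiting, for each point outside $\X$, an open $R$-ball disjoint from $\X$, then push the nearest-point projection $p$ out along the normal direction and invoke Federer's Theorem 4.8 for the constancy of the projection along that ray up to the reach. The only cosmetic difference is the final limiting step (you use $1$-Lipschitz continuity of $d_\X$ to get $d_\X(p+Ru)=R$, whereas the paper shows directly that $\mathring{B}(y_r,r)$ is covered by the smaller balls $\mathring{B}(y_\lambda,\lambda)$), and you correctly isolate the constancy-of-projection claim as the step that genuinely requires the positive-reach hypothesis.
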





%
%
%
%
%
%
%

\begin{lemma}\label{lem:rsmooth_implies_lambdasmooth}
Let $R>0$ and $\X\subset\R^n$ be a non-empty closed set. \revFirst{Assume that a ball of radius $R$ rolls freely in $\X$. Then, for all $0\leq\lambda\leq R$, a ball of radius $\lambda$ rolls freely in} $\X$.
\end{lemma}

\begin{lemma}\label{lem:convex_rolling_outside}
Let $\X\subset\R^n$ be a non-empty convex closed set. Then, a ball rolls freely in $\overline{\X^{\comp}}$. 
\end{lemma}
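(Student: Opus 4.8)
The plan is to combine the supporting hyperplane theorem for convex sets with an explicit construction of a tangent ball lying on the outward side of the supporting hyperplane. Fix an arbitrary radius $R>0$; the argument will produce, for each boundary point of $\overline{\X^\comp}$, a ball of radius $R$ contained in $\overline{\X^\comp}$ and passing through that point, which is exactly the rolling-ball condition applied to the set $\overline{\X^\comp}$. (Since $R>0$ is arbitrary, the conclusion in fact holds for every radius, matching the intuition that the complement of a convex set is curved so gently that balls of any size roll freely outside it.)

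First I would reduce the boundary of $\overline{\X^\comp}$ to the boundary of $\X$. Since $\X$ is closed, $\X^\comp$ is open; if a point $x\in\partial(\overline{\X^\comp})$ satisfied $x\in\X^\comp$, then an entire neighborhood of $x$ would lie in $\X^\comp\subseteq\overline{\X^\comp}$, contradicting $x\notin\Int(\overline{\X^\comp})$. Hence $x\in\X$, and since $x$ is a limit of points of $\X^\comp$ it lies in $\partial\X$. This gives $\partial(\overline{\X^\comp})\subseteq\partial\X$, so it suffices to construct the rolling ball at each relevant $x\in\partial\X$. The degenerate case $\Int(\X)=\emptyset$ is handled automatically: there $\overline{\X^\comp}=\R^n$ and $\partial(\overline{\X^\comp})=\emptyset$, so the condition is vacuous.

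Next, for such an $x$, I would invoke the supporting hyperplane theorem: because $\X$ is convex and $x\in\partial\X$, there is a unit vector $u\in\R^n$ with $u^\top(y-x)\leq 0$ for all $y\in\X$. Passing to complements, the open halfspace $\{y: u^\top(y-x)>0\}$ is contained in $\X^\comp$, so its closure $H^+:=\{y: u^\top(y-x)\geq 0\}$ satisfies $H^+\subseteq\overline{\X^\comp}$. I would then set $a=x+Ru$ and verify the three required properties: $a\in\overline{\X^\comp}$ because $u^\top(a-x)=R>0$; $x\in B(a,R)$ because $\|x-a\|=R$; and $B(a,R)\subseteq H^+\subseteq\overline{\X^\comp}$, the last inclusion following from Cauchy--Schwarz, since any $y$ with $\|y-a\|\leq R$ satisfies $u^\top(y-x)=u^\top(y-a)+R\geq -\|y-a\|+R\geq 0$. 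This exhibits a ball of radius $R$ rolling freely in $\overline{\X^\comp}$.

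The computation is entirely elementary once the supporting hyperplane is in hand, so I expect no genuine obstacle; the only point requiring care is the boundary identification $\partial(\overline{\X^\comp})\subseteq\partial\X$, which guarantees the supporting hyperplane theorem is applicable at every point where the rolling-ball condition must be checked, together with the graceful treatment of the empty-interior case where the claim holds vacuously.
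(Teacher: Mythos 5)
Your proof is correct and takes the same route as the paper, whose entire proof is a one-line citation of the supporting hyperplane theorem; you have simply filled in the details that the paper leaves implicit (the boundary identification $\partial(\overline{\X^\comp})\subseteq\partial\X$, the placement of the ball at $x+Ru$, and the Cauchy--Schwarz verification).
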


\begin{lemma}\cite[Proposition 6.1]{Niyogi2008}\cite[Proposition III.22]{AamariPhD2017}\label{lem:2nd_fund_form}
Let $\M\subset\R^n$ be a submanifold with $\reach(\M)\geq R>0$. Then, $\|\sff_x^{\M}(v,v)\|\leq \frac{1}{R}$ for all $x\in\M$ and unit-norm $v\in T_x\M$.
\end{lemma}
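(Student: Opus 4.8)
The plan is to reduce the bound on the second fundamental form to a bound on the curvature of geodesics, and then to extract that curvature bound from the reach hypothesis via a supporting-ball argument. First I would fix $x\in\M$ and a unit-norm $v\in T_x\M$, and take $\gamma:(-\varepsilon,\varepsilon)\to\M$ to be the unit-speed geodesic of $\M$ with $\gamma(0)=x$ and $\gamma'(0)=v$. Since the ambient acceleration of a geodesic is purely normal (Gauss formula with vanishing tangential covariant derivative), one has $\gamma''(0)=\sff_x^{\M}(v,v)\in N_x\M$, so that $\|\sff_x^{\M}(v,v)\|=\|\gamma''(0)\|$. If $\gamma''(0)=0$ the claimed bound is immediate, so I would assume $\kappa:=\|\gamma''(0)\|>0$ and set $\nu:=\gamma''(0)/\kappa\in N_x\M$, a unit normal.

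Next I would exploit that $\reach(\M)\geq R$ supplies, at every point and in every normal direction, a ball tangent to $\M$ whose interior avoids $\M$. Concretely, for each $t\in(0,R)$ the point $c_t:=x+t\nu$ lies at distance exactly $t$ from $\M$, with $x$ as its \emph{unique} nearest point; this is precisely the statement that $t<\reach(\M)$ along the normal direction $\nu\in N_x\M$. Consequently $\|y-c_t\|\geq t$ for every $y\in\M$, i.e. $\M\cap\mathring{B}(c_t,t)=\emptyset$, so the smooth function $g_t(s):=\|\gamma(s)-c_t\|^2-t^2$ is nonnegative on $(-\varepsilon,\varepsilon)$ and attains its minimum value $g_t(0)=0$ at the interior point $s=0$.

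The bound would then follow from the second-derivative test at this minimum. Differentiating gives $g_t'(s)=2\langle\gamma'(s),\gamma(s)-c_t\rangle$ and $g_t''(s)=2\langle\gamma''(s),\gamma(s)-c_t\rangle+2\|\gamma'(s)\|^2$. Evaluating at $s=0$ with $\gamma(0)-c_t=-t\nu$, $\gamma'(0)=v\perp\nu$, $\gamma''(0)=\kappa\nu$, and $\|\gamma'(0)\|=1$ yields $g_t'(0)=0$ (consistent with the minimum) and
\[
g_t''(0)=2\langle\kappa\nu,-t\nu\rangle+2=2-2\kappa t.
\]
Since $s=0$ minimizes the smooth function $g_t$, we must have $g_t''(0)\geq 0$, i.e. $\kappa\leq 1/t$. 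Letting $t\uparrow R$ then gives $\|\sff_x^{\M}(v,v)\|=\kappa\leq 1/R$, as claimed.

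The main obstacle is the middle step: rigorously justifying that $x$ is the unique nearest point of $\M$ to $c_t=x+t\nu$ whenever $t<R$ and $\nu\in N_x\M$, and hence that $\M$ avoids $\mathring{B}(c_t,t)$. This is exactly where the medial-axis definition of reach enters — one must argue that a point of the form $x+t\nu$ cannot acquire a second nearest point on $\M$ while $t<\reach(\M)$, so that $d_\M(c_t)=t$ is realized only at $x$. Once this supporting-ball property is established, the remainder is the routine second-order Taylor estimate above. I note that one could instead take an arbitrary unit-speed curve with velocity $v$ at $x$ and work with the normal component of $\gamma''(0)$; using a geodesic is cleaner because it makes $\gamma''(0)$ purely normal and identifies it directly with $\sff_x^{\M}(v,v)$.
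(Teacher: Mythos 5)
The paper does not prove this lemma; it cites it from \cite{Niyogi2008} and \cite{AamariPhD2017}, and your argument is essentially the standard proof from those sources, so there is no internal proof to diverge from. Your computation is correct: for the unit-speed geodesic, $\gamma''(0)=\sff_x^{\M}(v,v)$ is purely normal, and the second-derivative test on $g_t(s)=\|\gamma(s)-c_t\|^2-t^2$ at its interior minimum $s=0$ gives $\kappa t\le 1$, hence $\kappa\le 1/R$ after letting $t\uparrow R$. The step you flag as the main obstacle --- that $x$ is the unique nearest point of $\M$ to $c_t=x+t\nu$ for $t<R$ and unit $\nu\in N_x\M$, so that $\M\cap\mathring{B}(c_t,t)=\emptyset$ --- is not literally the definition of reach (the paper defines reach via the medial axis), but it is exactly Federer's Theorem 4.8(12), a standard consequence of positive reach that this paper already invokes in the proof of Lemma \ref{lem:reach_implies_rconvex}; citing it closes the gap, and note that your argument only needs the empty-ball inclusion $\M\cap\mathring{B}(c_t,t)=\emptyset$, not uniqueness of the foot point. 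A shorter route that avoids Federer's Theorem 4.8 altogether is available from machinery the paper already states: apply Theorem \ref{thm:reach} with $p=x$ and $q=\gamma(s)$ to get $\|\pi_x^\perp(\gamma(s)-x)\|=d_{T_x\M}(\gamma(s)-x)\le\|\gamma(s)-x\|^2/(2R)$, then Taylor expand $\gamma(s)-x=sv+\tfrac{s^2}{2}\sff_x^{\M}(v,v)+o(s^2)$ and let $s\to 0$; this yields $\tfrac{s^2}{2}\|\sff_x^{\M}(v,v)\|+o(s^2)\le\tfrac{s^2+o(s^2)}{2R}$ and hence the claim, trading the supporting-ball lemma for the tangent-space characterization of reach.
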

The next important result is an alternative characterization of the reach of a set.%
\begin{theorem}\label{thm:reach}\cite[Theorem 4.18]{Federer1959}
Let $\M\subset\R^n$ be a submanifold and $0<r<\infty$. Then, $\reach(\M)\geq r$ if and only if $d_{T_p\M}(q-p)\leq\frac{\|q-p\|^2}{2r}$ for all $p,q\in\M$. 
Thus,
$$
\reach(\M)
=
\inf_{p\neq q\in \M}\frac{\|q-p\|^2}{2d_{T_p\M}(q-p)}.
$$ 
\end{theorem}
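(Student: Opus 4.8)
The plan is to prove the stated equivalence in both directions for a fixed $r\in(0,\infty)$, and then read off the infimum formula. Throughout, I would use that since $T_p\M$ is a linear subspace, $d_{T_p\M}(q-p)=\norm{P_{N_p\M}(q-p)}$ is the norm of the component of $q-p$ orthogonal to the tangent space, where $P_{N_p\M}$ is orthogonal projection onto the normal space $N_p\M$. I would also record the first-order optimality fact that, because $\M$ is a submanifold, any nearest point $p$ of an external point $x$ satisfies $x-p\in N_p\M$ (differentiate $\norm{x-\gamma(s)}^2$ along curves $\gamma$ in $\M$ through $p$).

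For the direction $\reach(\M)\ge r\Rightarrow$ inequality, I would use a supporting-ball argument. Fix $p,q\in\M$ and a unit normal $\nu\in N_p\M$, and set $c_t=p+t\nu$. The key step is the claim that $p$ is the unique nearest point of $c_t$ on $\M$, hence $\norm{q-c_t}\ge\norm{p-c_t}=t$, for every $t<r$. Expanding $\norm{q-c_t}^2=\norm{q-p}^2-2t\,\nu^\top(q-p)+t^2\ge t^2$ gives $\nu^\top(q-p)\le\norm{q-p}^2/(2t)$; letting $t\uparrow r$ yields $\nu^\top(q-p)\le\norm{q-p}^2/(2r)$. Choosing $\nu=P_{N_p\M}(q-p)/\norm{P_{N_p\M}(q-p)}$ when the normal component is nonzero (the bound being trivial otherwise) turns the left-hand side into $d_{T_p\M}(q-p)$, which is the desired inequality.

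For the converse, I would argue by contradiction using the medial-axis definition of reach. If $\reach(\M)<r$, there is a point $z$ with two distinct nearest points $p_1\ne p_2$ at common distance $\rho=d(z,\M)<r$. Writing $z=p_1+\rho\nu_1$ with $\nu_1=(z-p_1)/\rho\in N_{p_1}\M$ a unit normal, the equality $\norm{z-p_2}^2=\norm{z-p_1}^2=\rho^2$ expands to $\nu_1^\top(p_2-p_1)=\norm{p_2-p_1}^2/(2\rho)$. On the other hand, the hypothesized inequality with $(p,q)=(p_1,p_2)$ together with Cauchy--Schwarz gives $\nu_1^\top(p_2-p_1)\le d_{T_{p_1}\M}(p_2-p_1)\le\norm{p_2-p_1}^2/(2r)$. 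Combining and dividing by $\norm{p_1-p_2}^2>0$ forces $\rho\ge r$, contradicting $\rho<r$; hence $\reach(\M)\ge r$. Having shown, for every $r\in(0,\infty)$, that $\reach(\M)\ge r$ is equivalent to $d_{T_p\M}(q-p)\le\norm{q-p}^2/(2r)$ for all $p,q$ — equivalently to $r\le\norm{q-p}^2/(2d_{T_p\M}(q-p))$ for all $p\ne q$, terms with tangential $q-p$ being vacuous — the set $\{r:\reach(\M)\ge r\}$ coincides with $\{r:r\le r^\star\}$ for $r^\star=\inf_{p\ne q}\norm{q-p}^2/(2d_{T_p\M}(q-p))$, so $\reach(\M)=r^\star$, the claimed formula.

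The main obstacle is the supporting-ball claim underlying the forward direction, namely that $p$ remains the unique nearest point of $c_t=p+t\nu$ for all $t<r$; this is where the regularity packaged in $\reach(\M)\ge r$ must genuinely be used. For $t<r$ one has $d(c_t,\M)\le t<r$, so $c_t$ lies in the open $r$-tube $\{x:d(x,\M)<r\}$ on which the nearest-point projection $\xi$ is single-valued and continuous, and one must show $\xi(c_t)=p$ along the entire normal ray. I would establish this by a continuation argument — $\xi(c_t)=p$ holds for small $t$, and the set of such $t$ is both open and closed in $[0,r)$ — invoking the standard properties of $\xi$ on sets of positive reach~\cite{Federer1959}, which ensure that normal rays foliate the tube and that $\xi$ is constant along each ray up to distance $r$. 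Making this continuation rigorous, rather than the elementary algebra in the two directions, is the crux.
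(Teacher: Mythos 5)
The paper does not prove this statement: it is imported verbatim from Federer (\cite[Theorem~4.18]{Federer1959}) and used as a black box, so there is no in-paper argument to compare against. What you have written is essentially a reconstruction of Federer's own proof, and its logical skeleton is correct. The converse direction is complete as stated: the first-order condition $z-p_1\in N_{p_1}\M$, the expansion of $\norm{z-p_2}^2=\rho^2$ giving $\nu_1^\top(p_2-p_1)=\norm{p_2-p_1}^2/(2\rho)$, and the comparison with the hypothesized bound do force $\rho\geq r$ and hence the contradiction. The passage from the two-sided equivalence to the infimum formula is also fine, including the vacuous-term convention when $q-p$ is tangential.

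The only substantive issue is the one you flag yourself: the supporting-ball claim that $\xi(p+t\nu)=p$ for all $t<r$ along every unit normal $\nu\in N_p\M$ (Federer's 4.8(12)). Your ``open and closed in $[0,r)$'' continuation is the right shape, but be aware that the two halves are not symmetric in difficulty. Closedness is immediate from continuity of the nearest-point map on the open $r$-tube. Openness is where the reach hypothesis does real work: to push past a $t_0$ with $\xi(c_{t_0})=p$ you need a quantitative input such as the local Lipschitz estimate for $\xi$ on the sub-tube $\{x:d(x,\M)\leq\mu r\}$ (Federer's 4.8(8)), or the segment property $\xi(a+\lambda(x-a))=a$ for $\lambda\in[0,1]$ combined with a supremum argument; the phrase ``normal rays foliate the tube'' is itself a restatement of the claim being proved, so it cannot be invoked as the justification. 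Since the paper cites Federer for the entire theorem, citing Federer for this lemma is defensible, but if the proof is meant to be self-contained that single step is the one that still needs to be written out.
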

Theorem \ref{thm:reach} provides an alternative definition of the reach of a submanifold, as well as a bound on the distance to the tangent space. We note that \cite[Theorem 4.18]{Federer1959} applies to closed sets $\M$ that are not necessarily submanifolds, after appropriately defining the tangent space $T_p\M$. If $\M$ is a submanifold, the definition of the tangent space in \cite[Theorem 4.18]{Federer1959} matches the usual definition, see \cite[Remark 4.6]{Federer1959}. We only apply Theorem \ref{thm:reach} to submanifolds in this work.

\section{Smoothness of the convex hull $\hull(\Y)$}\label{sec:smoothness_output}

As we will see in Section \ref{sec:error_bounds}, 
a set that is $R$-smooth can be accurately reconstructed from a sample \rev{(Theorem \ref{thm:error_bound:smooth_boundary})}. 
Thus, in this section, we study the smoothness properties of $\Y$ \rev{and of its convex hull}. 
The main result of this section is that the convex hull of $\Y$ is always $R$-smooth if \rev{a ball rolls freely in $\X$ and} $f$ is a submersion (Theorem \ref{thm:submersion_rsmooth}), so the boundary of the convex hull is necessarily a submanifold (Corollary \ref{cor:sub_hull:boundary_manifold}).

\rev{The next example illustrates the main difficulty in obtaining smoothness properties of images $\Y=f(\X)$ of smooth sets $\X$. E}ven if $\X\subset\R^m$ is $r$-smooth and $f$ is a local diffeomorphism (in particular, a submersion), $\Y$ may not be $R$-smooth. Indeed, if $\Y$ is $R$-smooth, then $\partial\Y$ must be an $(n-1)$-submanifold by Theorem \ref{thm:walther1999}. However, this may not be the case due to self-intersections. 

\vspace{1mm}

\noindent\begin{minipage}{0.6\linewidth}
\begin{example}[\rev{Self-intersections and $R$-smoothness}]\label{ex:intersection}
Let $L=\{(r,\theta)\in\R^2: r=1.5, (-\pi+0.5)\leq\theta\leq (3\pi/2-0.5)\}$ and define the input set and map 
$$
\X = L+B(0,0.5), \  
f:\R^2\to\R^2: (r,\theta)\mapsto(r\cos(\theta),r\sin(\theta)).
$$
One can freely roll a ball of radius $0.5$ inside $\X$ and $\overline{\X^{\comp}}$, so $\X$ is $r$-smooth. 
Moreover, the map 
$f$ is a local diffeomorphism on $\X$ (hence a submersion). However, one cannot roll a ball outside $\Y$, so $\Y$ is not $R$-smooth for any $R>0$.  We represent $\Y$ in Figure \ref{fig:self_intersect}. 
\end{example}
\end{minipage}%
\hspace{2mm}
\begin{minipage}{0.39\linewidth}
    \centering	
    \includegraphics[width=0.95\linewidth]{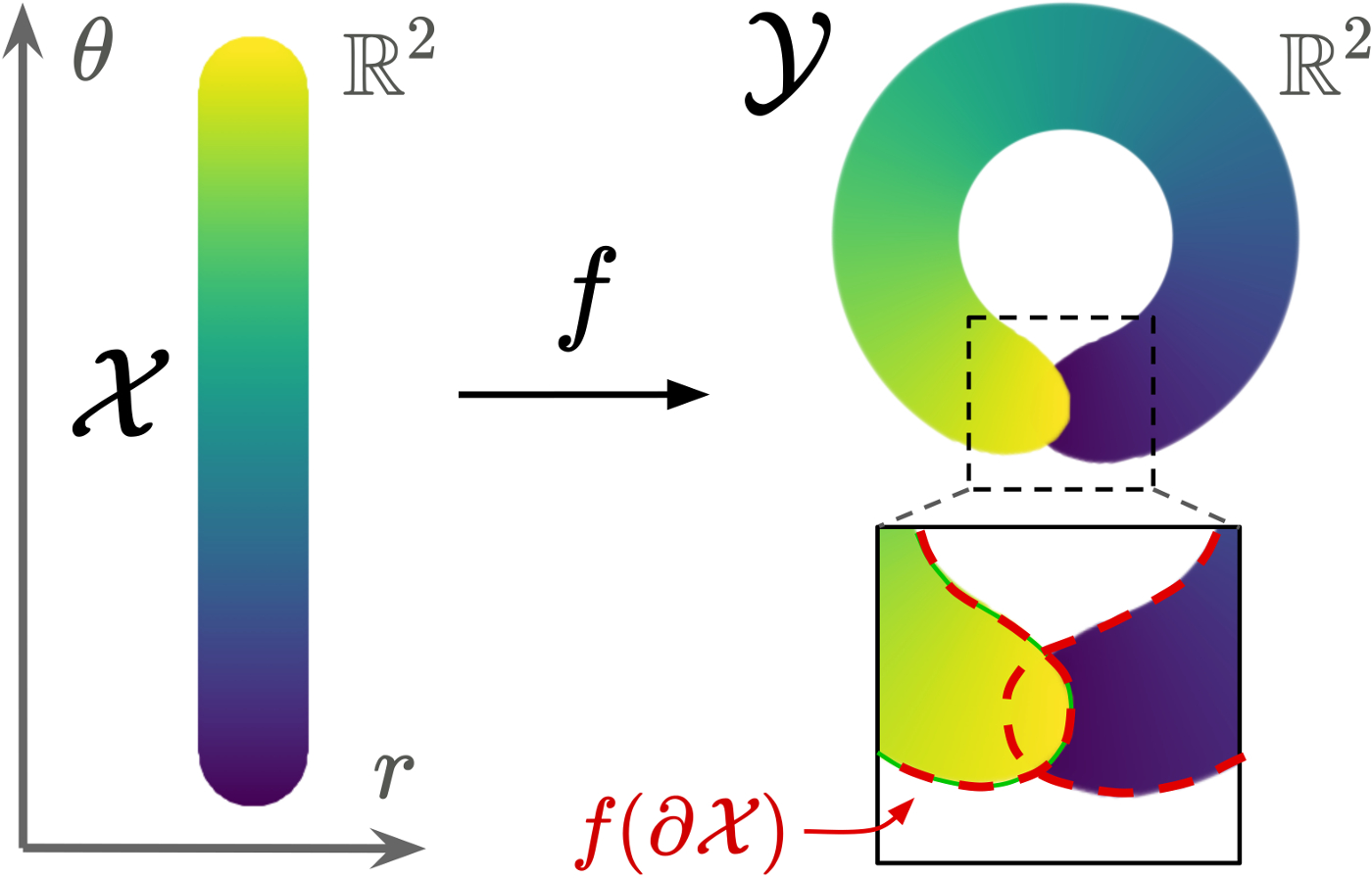}
    \captionof{figure}{The boundary of images of sets with smooth boundary \rev{may not be smooth due to self-intersections}.}
    \label{fig:self_intersect}
\end{minipage}%
\\

Theorem \ref{thm:submersion_rsmooth} implies that a ball of radius $R$ rolls freely in $\hull(\Y)$ and $\overline{\hull(\Y)^\comp}$ if $f$ is a submersion. 
We stress that 
Theorem \ref{thm:submersion_rsmooth} does not imply that $\Y$ is $R$-smooth, as \rev{$\partial\Y$ may not be a smooth submanifold due to self-intersections}. \rev{In general, $f(\X^\comp)\neq\Y^\comp$ (in Example \ref{ex:intersection}, $f(\X^\comp)=\R^n\neq\Y^\comp$), so assuming that a ball rolls freely inside $\X$ (or that $\X$ is $r$-smooth) and $f$ is a submersion is insufficient to ensure that $\Y$ is $R$-smooth, as a ball may not roll freely in $\overline{\Y^\comp}$.}
The main idea of the proof \rev{of Theorem \ref{thm:submersion_rsmooth}} is that intersections disappear after taking the convex hull. 

\rev{We prove Theorem \ref{thm:submersion_rsmooth} in four steps. First, we show that smoothness properties of $\X$ are preserved if $f$ is a diffeomorphism (Section \ref{sec:smoothness_output:diffeo}). Second, we show that rolling-ball properties of $\X$ are preserved if $f$ is a submersion (Section \ref{sec:smoothness_output:sub:rolling}). The proof uses the rank theorem and results in Section \ref{sec:smoothness_output:diffeo}. 
Third, we show that smooothness properties are preserved after taking the convex hull (Section \ref{sec:smoothness_output:hull}). Finally, we prove Theorem \ref{thm:submersion_rsmooth} by combining the previous results (Section \ref{sec:smoothness_output:conclusion}).} 

\subsection{\rev{Smoothness properties are preserved under diffeomorphisms}}\label{sec:smoothness_output:diffeo}

If $f$ is a diffeomorphism, the output set $\Y$ is always $R$-smooth if $\X$ is $r$-smooth (Corollary \ref{cor:rsmooth:diffeo}). This result almost immediately follows from the well-known result that the reach is conserved under diffeomorphisms, see \cite[Theorem 4.19]{Federer1959} and \cite[Lemma III.17]{AamariPhD2017}. 
\\[2mm]
\begin{minipage}{0.6\linewidth} 
\begin{lemma}[Stability of the reach under diffeomorphisms]\label{lemma:reach:diffeo}

\hspace{-1pt}\cite[Theorem 4.19]{Federer1959}
Let $\X\subset\R^n$ be a closed set, 
$f:\R^n\to\R^n$, and $\Y=f(\X)$. Let $r,s>0$, and assume that $\reach(\X)\geq r>0$ and that $f|_{\X+B(0,s)}$ is a $C^1$ diffeomorphism such that $(f,f^{-1},\dd f)$ are $(\bar{L},\underline{L},\bar{H})$-Lipschitz, respectively.  
Then,
$$
\reach(\Y)\geq
\min\left(
\frac{s}{\underline{L}},
\frac{1}{\left(\frac{\bar{L}}{r}+\bar{H}\right)\underline{L}^2}
\right).
$$ 
\end{lemma}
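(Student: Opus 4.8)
The plan is to derive the reach bound from the distance-to-tangent-space characterization in Theorem~\ref{thm:reach}. Writing $\rho=\min\big(s/\underline{L},\,1/((\bar{L}/r+\bar{H})\underline{L}^2)\big)$, it suffices to verify $d_{T_{p'}\Y}(q'-p')\leq \|q'-p'\|^2/(2\rho)$ for all $p',q'\in\Y$, where $T_{p'}\Y$ is read as Federer's tangent cone when $\Y$ fails to be a submanifold. Since $f|_{\X+B(0,s)}$ is a $C^1$ diffeomorphism, I would write $p'=f(p)$ and $q'=f(q)$ with $p,q\in\X$ and record the two facts to lean on: (i) the differential transports tangent data, $\dd f_p(T_p\X)=T_{p'}\Y$, so $\dd f_p$ sends tangential directions of $\X$ at $p$ to tangential directions of $\Y$ at $p'$; and (ii) $\|\dd f_p\|\leq\bar{L}$ together with $\|q-p\|=\|f^{-1}(q')-f^{-1}(p')\|\leq\underline{L}\|q'-p'\|$ from the Lipschitz hypotheses. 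I would then split according to the input separation $\|q-p\|$.

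In the near regime $\|q-p\|\leq 2s$, the whole segment $[p,q]$ lies in $\X+B(0,s)$, since every point $p+t(q-p)$ is within $\tfrac12\|q-p\|\leq s$ of an endpoint and hence of $\X$; consequently $\dd f$ is $\bar{H}$-Lipschitz along the segment and the Taylor remainder $\xi:=f(q)-f(p)-\dd f_p(q-p)$ obeys $\|\xi\|\leq\tfrac{\bar{H}}{2}\|q-p\|^2$. Using $\reach(\X)\geq r$ and Theorem~\ref{thm:reach}, I would decompose $q-p=u+w$ with $u\in T_p\X$ a closest tangent vector to $q-p$, so that $\|w\|=d_{T_p\X}(q-p)\leq\|q-p\|^2/(2r)$. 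Since $\dd f_p(u)\in T_{p'}\Y$ by (i) and $q'-p'=\dd f_p(u)+\dd f_p(w)+\xi$,
\[
d_{T_{p'}\Y}(q'-p')\leq \|\dd f_p(w)+\xi\|\leq \bar{L}\|w\|+\tfrac{\bar{H}}{2}\|q-p\|^2\leq \tfrac12\Big(\tfrac{\bar{L}}{r}+\bar{H}\Big)\|q-p\|^2.
\]
Converting the input separation via $\|q-p\|\leq\underline{L}\|q'-p'\|$ then gives $d_{T_{p'}\Y}(q'-p')\leq\tfrac12(\bar{L}/r+\bar{H})\underline{L}^2\|q'-p'\|^2\leq\|q'-p'\|^2/(2\rho)$.

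In the far regime $\|q-p\|>2s$, I would use only the trivial estimate $d_{T_{p'}\Y}(q'-p')\leq\|q'-p'\|$ (as $0$ lies in the tangent set) together with $\|q'-p'\|\geq\|q-p\|/\underline{L}>2s/\underline{L}\geq 2\rho$; since $\|q'-p'\|\geq 2\rho$ yields $\|q'-p'\|\leq\|q'-p'\|^2/(2\rho)$, the required inequality holds again. Combining the two regimes verifies the tangent-distance bound for every pair in $\Y$, and Theorem~\ref{thm:reach} delivers $\reach(\Y)\geq\rho$. I expect the main obstacle to be the near regime, and within it the need to run the Taylor expansion over a segment on which the hypotheses actually hold: this is precisely what forces the threshold $2s$ (via the observation that the segment stays within $\tfrac12\|q-p\|$ of $\X$) and produces the first term $s/\underline{L}$ with the correct constant. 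A secondary technical point is the transport of tangent data in (i) and the use of Federer's tangent cone rather than a genuine tangent space when $\Y$ is full-dimensional or $\partial\Y$ is non-smooth, for which the general closed-set form of Theorem~\ref{thm:reach} must be invoked; the remaining Lipschitz bookkeeping is routine.
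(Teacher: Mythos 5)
The paper gives no proof of this lemma: it is imported verbatim as \cite[Theorem 4.19]{Federer1959}, so there is no in-paper argument to compare against. Your derivation is correct and is, in substance, the standard proof of Federer's result: reduce to the distance-to-tangent-cone characterization (the closed-set form of Theorem~\ref{thm:reach}), split on whether the preimage separation $\|q-p\|$ exceeds $2s$, run a first-order Taylor expansion with the $\bar H$-Lipschitz remainder along the segment (which stays in $\X+B(0,s)$ precisely when $\|q-p\|\le 2s$), kill the tangential part using $\dd f_p(T_p\X)=T_{p'}\Y$, bound the normal part of $q-p$ by $\|q-p\|^2/(2r)$ via $\reach(\X)\ge r$, and convert back with $\|q-p\|\le \underline{L}\|q'-p'\|$; the far regime is handled by the trivial bound $d_{T_{p'}\Y}(q'-p')\le\|q'-p'\|$ once $\|q'-p'\|\ge 2\rho$. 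All constants come out matching the statement. Two small points you gloss over but should record: (a) $\Y=f(\X)$ must be shown closed before its reach is discussed — this follows in one line since $f^{-1}$ is $\underline{L}$-Lipschitz on $f(\X+B(0,s))$ and $\X$ is closed, so limits of sequences in $\Y$ pull back to Cauchy sequences in $\X$; and (b) the tangent-cone transport $\dd f_p(\mathrm{Tan}(\X,p))=\mathrm{Tan}(\Y,p')$ for general closed sets deserves a sentence (it follows from $f(x)-f(p)=\dd f_p(x-p)+o(\|x-p\|)$ and injectivity of $\dd f_p$, since tangent cones are defined by limits of secant directions and are stable under positive scaling). Neither is a real gap.
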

\end{minipage}%
\hspace{2mm}
\begin{minipage}{0.39\linewidth}
    \centering
    \vspace{2mm}	\includegraphics[width=0.99\linewidth]{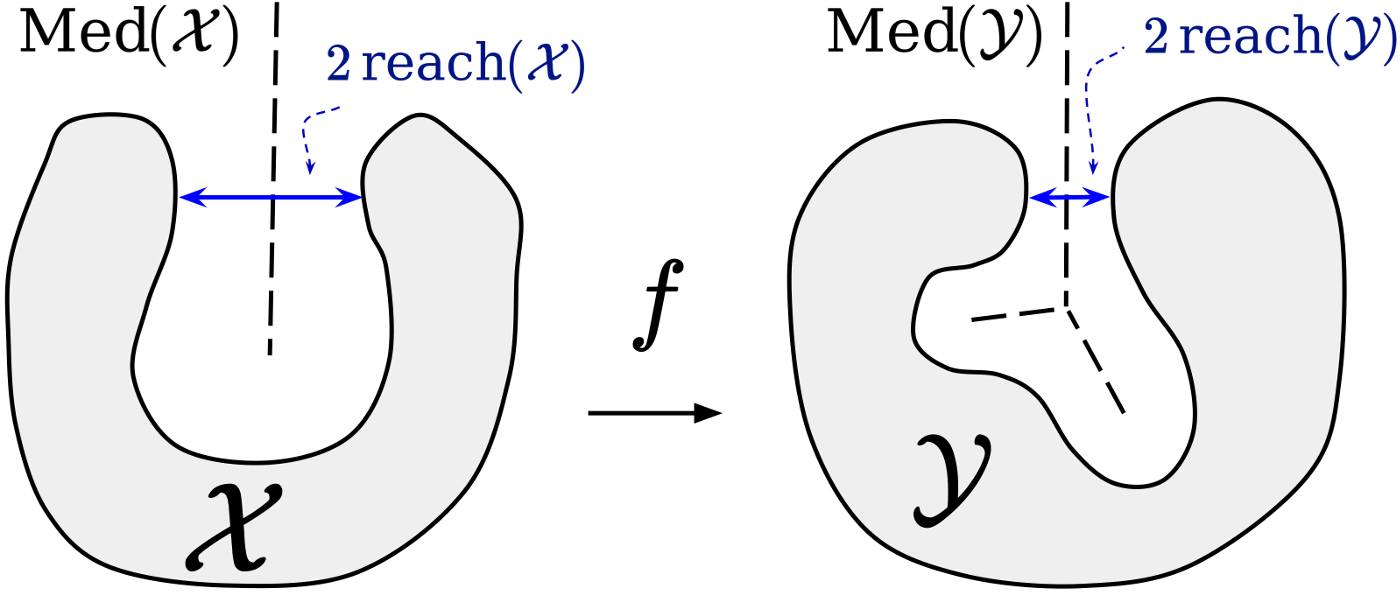}
    \captionof{figure}{Positive reach is conserved under diffeomorphisms.}
    \label{fig:reach_diffeo}
\end{minipage}%

\begin{corollary}[$r$-smoothness is preserved under diffeomorphisms]\label{cor:rsmooth:diffeo}
Let $r>0$, $\X\subset\R^n$ be a non-empty path-connected $r$-smooth compact set, and 
$f:\R^n\to\R^n$ be a $C^1$ diffeomorphism such that $(f,f^{-1},\dd f)$ are $(\bar{L},\underline{L},\bar{H})$-Lipschitz. Let $R^{-1}=
\left(\frac{\bar{L}}{r}+\bar{H}\right)\underline{L}^2$ and $\Y=f(\X)$. \rev{Then, $\Y$ is $R$-smooth.}
\end{corollary}
\begin{proof}
By Lemma \ref{lem:roll_implies_reach}, $\reach(\X)\geq r$ and $\reach(\overline{\X^{\comp}})\geq r$. By Lemma \ref{lemma:reach:diffeo} %
$\reach(\Y)\geq R$ and $\reach(\overline{\Y^\comp})\geq R$. By Lemma \ref{lem:reach_implies_rconvex}, $\Y$ and $\overline{\Y^\comp}$ are $R$-convex. Thus, $\Y$ is $R$-smooth by Theorem \ref{thm:walther1999} ($\Y$ is a non-empty path-connected compact set since $\X$ is one and $f$ is a diffeomorphism).  
\end{proof}

\begin{corollary}[\revFirst{The rolling ball condition is preserved under diffeomorphisms}]\label{cor:rsmooth:diffeo:rolling}
\revFirst{Let $\X\subset\R^n$ be a non-empty closed set such that a ball of radius $r>0$ rolls freely in $\X$, and 
$f:\R^n\to\R^n$ be a $C^1$ diffeomorphism such that $(f,f^{-1},\dd f)$ are $(\bar{L},\underline{L},\bar{H})$-Lipschitz. Let $R^{-1}=
\left(\frac{\bar{L}}{r}+\bar{H}\right)\underline{L}^2$ and $\Y=f(\X)$. 
Then, a ball of radius $R$ rolls freely in $\Y$.}
\end{corollary}
\begin{proof}
\revFirst{Let $y\in\partial\Y$ and $x\in\partial\X$ be such that $y=f(x)$. Since a ball of radius $r$ rolls freely in $\X$, there exists a ball $B_x\triangleq B(\hat{x},r)$ for some $\hat{x}\in\X$ such that $x\in B_x\subseteq\X$. By Corollary \ref{cor:rsmooth:diffeo} and since $B_x$ is $r$-smooth, $f(B_x)$ is $R$-smooth. Thus, a ball of radius $R$ rolls freely in $f(B_x)$. Thus, there exists $\hat{y}\in\Y$ such that $y\in B(\hat{y},R)\subseteq f(B_x)\subseteq\Y$. Thus, a ball of radius $R$ rolls freely in $\Y$. 
}
\end{proof}

\subsection{\rev{Rolling-ball properties are preserved under submersions}}\label{sec:smoothness_output:sub:rolling}
Assuming that $f$ is a diffeomorphism \rev{as in Section \ref{sec:smoothness_output:diffeo}} is restrictive, as it implies that $f$ maps between two sets  of the same dimension and thus does not allow considering problems with more inputs $x\in\X$ than outputs $y\in\Y$. \rev{Although $\Y$ is not necessarily $R$-smooth (see Example \ref{ex:intersection}), we show that a ball rolls freely in $\Y$ if $f$ is a submersion and \revFirst{a ball rolls freely in $\X$ (e.g., if} $\X$ is $r$-smooth\revFirst{)}. The next result combines the rank theorem and \revFirst{Corollary \ref{cor:rsmooth:diffeo:rolling}}.}

\begin{lemma}\label{lem:submersion_rolling_ball_Y}
\rev{Let $r>0$, $\X\subset\R^m$ be a non-empty compact set such that a ball of radius $r$ rolls freely in $\X$, 
$f:\R^m\to\R^n$ be a $C^1$ submersion such that $(f,\dd f)$ are Lipschitz, and $\Y=f(\X)$. 
Then, for some $R>0$, a ball of radius $R$ rolls freely in $\Y$.}
\end{lemma}

To prove Lemma \ref{lem:submersion_rolling_ball_Y}, we use the following intermediate results. %

\begin{lemma}\label{lem:submersion_open_map}\cite[Corollary C.36]{Lee2012}
Let  $f:\R^m\to\R^n$ be a $C^1$ submersion. Then, $f$ is an open map.
\end{lemma}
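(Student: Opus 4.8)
The plan is to reduce the openness of $f$ to the openness of the canonical coordinate projection by invoking the rank theorem, and then patch the resulting local statement into a global one. Because $f$ is a submersion, $\dd f_x:\R^m\to\R^n$ is surjective, hence has the maximal possible rank $n$, at every point $x\in\R^m$; in particular $m\geq n$ and $f$ has constant rank $n$ throughout. This constant-rank hypothesis is exactly what is needed to put $f$ into a local normal form.

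First I would fix an arbitrary point $x_0\in\R^m$ and set $y_0=f(x_0)$. Applying the rank theorem (equivalently, the implicit function theorem specialized to submersions) yields a neighborhood $U_0$ of $x_0$, a neighborhood $V_0$ of $y_0$, and $C^1$ diffeomorphisms $\varphi$ on $U_0$ and $\psi$ on $V_0$ onto open subsets of $\R^m$ and $\R^n$, respectively, such that $\psi\circ f\circ\varphi^{-1}$ coincides on $\varphi(U_0)$ with the canonical projection $\pi:\R^m\to\R^n$, $(a_1,\dots,a_m)\mapsto(a_1,\dots,a_n)$. I would then use two elementary facts. First, $\pi$ is an open map: the image under $\pi$ of a basic open box $\prod_{i=1}^m(c_i,d_i)$ is the open box $\prod_{i=1}^n(c_i,d_i)$, and since every open set is a union of such boxes, $\pi$ sends open sets to open sets. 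Second, any $C^1$ diffeomorphism onto an open set is an open map, being a homeomorphism onto an open subset. Writing $f=\psi^{-1}\circ\pi\circ\varphi$ on $U_0$ then exhibits $f|_{U_0}$ as a composition of open maps, so $f|_{U_0}$ is open as a map into the open set $V_0\subseteq\R^n$, and hence open as a map into $\R^n$.

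Finally I would upgrade this local statement to global openness, using the fact that openness is a local property: if every point of the domain has an open neighborhood on which $f$ restricts to an open map, then $f$ itself is open. Concretely, given any open $U\subseteq\R^m$, write $U=\bigcup_{x\in U}(U\cap U_x)$ with $U_x$ the neighborhood supplied by the previous step; each $U\cap U_x$ is open in $U_x$, so $f(U\cap U_x)$ is open in $\R^n$, and therefore $f(U)=\bigcup_{x\in U} f(U\cap U_x)$ is a union of open sets and hence open. Since the preceding step produces such a neighborhood around every $x$, this completes the argument. I do not expect a genuine obstacle: the only points requiring care are verifying that the submersion hypothesis yields constant (full) rank, so that the rank theorem applies in its $C^1$ form (for submersions this reduces to the implicit function theorem, which is valid for $C^1$ maps and produces $C^1$ charts), and checking the routine local-to-global patching for openness. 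The entire geometric content is carried by the local normal form.
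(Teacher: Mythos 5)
Your proof is correct, and it reconstructs precisely the standard argument behind the paper's citation (the paper gives no proof of its own, deferring to \cite[Corollary C.36]{Lee2012}): local normal form via the rank theorem, openness of the coordinate projection, and local-to-global patching, with the $C^1$ subtlety properly handled by reducing to the implicit function theorem. Nothing further is needed.
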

\begin{lemma}\label{lem:open_map_boundary}
Let  $f:\R^m\to\R^n$ be a continuous open map and  $\rev{A}\subset\R^m$ be compact. Then, $\partial f(\rev{A})\subseteq f(\partial\rev{A})$.
\end{lemma}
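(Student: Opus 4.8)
The plan is to take an arbitrary boundary point $y\in\partial f(\X)$ and exhibit a preimage of $y$ that lies on $\partial\X$. First I would record that, since $f$ is continuous and $\X$ is compact, the image $f(\X)$ is compact and hence closed; consequently $\partial f(\X)=f(\X)\setminus\Int(f(\X))\subseteq f(\X)$. This closedness is exactly what guarantees that every $y\in\partial f(\X)$ actually belongs to $f(\X)$, so that the preimage set $f^{-1}(y)\cap\X$ is non-empty — this is where compactness (rather than mere continuity) is used.

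Next, I would fix $y\in\partial f(\X)$, choose any $x\in\X$ with $f(x)=y$, and argue by contradiction that $x$ cannot lie in the interior of $\X$. Indeed, if $x\in\Int(\X)$, then there is an open set $U$ with $x\in U\subseteq\Int(\X)\subseteq\X$. Because $f$ is an open map, $f(U)$ is open, and $y=f(x)\in f(U)\subseteq f(\X)$; hence $f(U)$ is an open neighborhood of $y$ contained in $f(\X)$, which places $y$ in $\Int(f(\X))$. This contradicts $y\in\partial f(\X)=f(\X)\setminus\Int(f(\X))$.

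Therefore $x\notin\Int(\X)$, and since $x\in\X$ this forces $x\in\partial\X$, giving $y=f(x)\in f(\partial\X)$. As $y\in\partial f(\X)$ was arbitrary, this yields the claimed inclusion $\partial f(\X)\subseteq f(\partial\X)$.

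I do not anticipate a genuine obstacle here: the argument is a short point-set topology computation combining the openness of $f$ with the closedness of $f(\X)$. The only points requiring care are (i) invoking compactness to ensure $f(\X)$ is closed, so that boundary points of $f(\X)$ genuinely have preimages in $\X$, and (ii) correctly applying the open-map property to an open set contained in $\Int(\X)$ rather than merely in $\X$, so that the resulting open image is guaranteed to lie inside $f(\X)$ and hence witnesses membership of $y$ in $\Int(f(\X))$.
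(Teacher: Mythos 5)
Your proof is correct and follows essentially the same route as the paper's: both use compactness to guarantee that a boundary point of $f(\X)$ has a preimage in $\X$, then derive a contradiction from the openness of $f$ if that preimage were interior. If anything, your version is slightly cleaner, since you only invoke the inclusion $f(U)\subseteq\Int(f(\X))$ for an open $U\subseteq\Int(\X)$, whereas the paper asserts the equality $f(\Int(\X))=\Int(f(\X))$, of which only the (valid) forward inclusion is actually needed.
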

\begin{proof}
$A$ is closed, so $\partial A=\overline{A}\setminus\Int(A)=A\setminus\Int(A)$\rev{, so $A=\Int(A)\cup\partial A$.} 
Similarly, 
$f$ is continuous and $A$ is compact, so $f(A)$ is compact\rev{, so $\partial f(A)= \overline{f(A)}\setminus\Int(f(A))=f(A)\setminus\Int(f(A))$}. 

Let $\rev{y}\in\partial f(A)$. \rev{Then,}
there exists $\rev{x}\in A$ such that $\rev{y}=f(\rev{x})$ and either $\rev{x}\in\Int(A)$ or $\rev{x}\in\partial A$. 
\rev{If $x\in\Int(A)$, then $y=f(x)\in f(\Int(A))\subseteq \Int(f(A))$ since $f$ is open. Thus, $y\in\Int(f(A))$, which contradicts $y\in\partial f(A)$. We conclude that $x\in\partial A$.}
\end{proof}

\begin{lemma}\label{lemma:proj}
Let $A\subset\R^m$ be a non-empty \revFirst{closed} set, 
$\pi:\R^m\to\R^n:(x_1,\dots,x_n,x_{n+1},\dots,x_m)\mapsto(x_1,\dots,x_n)$ be the standard projection\revFirst{, and  assume that a ball of radius $r>0$ rolls freely in $A$.} Then, \revFirst{a ball of radius $r$ rolls freely in} $\pi(A)$.
\end{lemma}
 
\begin{proof} 
$\pi(A)$ is \revFirst{closed} since $A$ is \revFirst{closed} and $\pi$ is continuous.  
\revFirst{Let} $y\in\partial\pi(A)$.  
By Lemma \ref{lem:open_map_boundary}, $\partial\pi(A)\subseteq\pi(\partial A)$. Thus, there exists $x\in\partial A$ such that $y=\pi(x)$.  
Since \revFirst{a ball of radius $r$ rolls freely in} $A$, there exists a ball $B^m\triangleq B(\hat{x},r)$ such that $x\in B^m\subseteq A$. 
\revFirst{The ball}  $B^n\triangleq\pi(B^m)$ with $B^n=B(\hat{y},r)=B(\pi(\hat{x}),r)\subset\R^n$ satisfies $y\in B^n\subseteq\pi(A)$. \revFirst{Thus, a ball of radius $r$ rolls freely in $\pi(A)$.}
\end{proof}

\begin{proof}[Proof of Lemma \ref{lem:submersion_rolling_ball_Y}]
Let $x\in\partial\X$.  
By the rank theorem, since $f$ is a submersion, there exist two charts $(U_x,\varphi_x)$ and $(V_{f(x)},\psi_{f(x)})$ such that $\psi_{f(x)}\circ f\circ\varphi_x^{-1}$ is a coordinate projection:
$$
\psi_{f(x)}\circ f\circ\varphi_x^{-1}: \varphi_x(U_x\cap f^{-1}(V_{f(x)}))\to \psi_{f(x)}(V_{f(x)}): \hat{x}=(\hat{x}_1,\dots,\hat{x}_n,\hat{x}_{n+1},\dots\hat{x}_m)\mapsto (\hat{x}_1,\dots,\hat{x}_n).
$$

We proceed in three steps.
\begin{itemize}[leftmargin=5mm]\setlength\itemsep{0.5mm}
\item \textit{Step 1: Build a suitable finite family of charts for $\partial\X$.} Since \revFirst{a ball of radius $r$ rolls freely in} $\X$, \revFirst{for all $0\leq\lambda\leq r$, a ball of radius $\lambda$ rolls freely in} $\X$ by Lemma \ref{lem:rsmooth_implies_lambdasmooth}. Thus, %
	for any $0<r_x\leq r$ and any $\tilde{x}\in \partial\X\cap U_x$,  
	there is a ball $B(\bar{x},r_x)$ such that $\tilde{x}\in B(\bar{x},r_x)\subseteq\X$. 
	Let $\epsilon_x>0$ be small-enough so that $B(x,\epsilon_x)\subset U_x$. Then, by choosing $r_x>0$ small-enough,  
$$
(P)\quad \text{for any }\tilde{x}\in\partial\X\cap B(x,\epsilon_x),
\ \text{there is } B(\bar{x},r_x)
\ \text{such that }
\tilde{x}\in B(\bar{x},r_x)\subset\X\cap U_x.
$$
The family $\{(B(x,\epsilon_x),\varphi_x)\}_{x\in\partial\X}$ is thus a family of smooth charts that covers $\partial\X$ and satisfies (P).  
Since $\partial\X$ is compact, there exists a \textit{finite} subcover of $\{(B(x,\epsilon_x),\varphi_x)\}_{x\in\partial\X}$. 
Thus, we restrict our attention to a finite family of such charts $\{(B(x_i,\epsilon_{x_i}),\varphi_{x_i})\}_{i\in I}$ covering $\partial\X$ satisfying (P).  

\item \textit{Step 2: Show that there is a ball at any $y\in\partial\Y$ inside $\Y$.}
Let  $y\in\partial\Y$ be arbitrary and %
$\tilde{x}\in\partial\X$ be such that $y=f(\tilde{x})$ (this $\tilde{x}$ exists since $\partial\Y\subseteq f(\partial\X)$, by Lemmas \ref{lem:submersion_open_map} and \ref{lem:open_map_boundary} since $f$ is a submersion).  Then, since the $B(x_i,\epsilon_{x_i})$ cover $\partial\X$, $\tilde{x}$ is in the domain of one of the charts $(B(x_i,\epsilon_{x_i}),\varphi_{x_i})$ for some $i\in I$, i.e., $\tilde{x}\in\partial\X\cap B(x_i,\epsilon_{x_i})$. Thus, by  (P), there exists $B(\bar{x},r_{x_i})$ such that $\tilde{x}\in B(\bar{x},r_{x_i})\subset\X\cap U_{x_i}$. 
Next, we prove that \revFirst{for some $R_{x_i}>0$, a ball of radius $R_{x_i}$ rolls freely in} $f(B(\bar{x},r_{x_i}))$ in four steps. For conciseness, we denote $\varphi=\varphi_{x_i}$ and $\psi=\psi_{f(x_i)}$.   
\begin{itemize}[leftmargin=5mm]\setlength\itemsep{0.5mm}
\item \revFirst{A ball of radius $r_{x_i}$ rolls freely in} $B(\bar{x},r_{x_i})$.

\item By Corollary \ref{cor:rsmooth:diffeo:rolling}, \revFirst{a ball of radius $\tilde{r}_{x_i}$ rolls freely in} $\varphi(B(\bar{x},r_{x_i}))$ for some $\tilde{r}_{x_i}>0$, since 
$\varphi$ is a diffeomorphism on $U_{x_i}$ with $B(\bar{x},r_{x_i})\subset U_{x_i}$.

\item By Lemma \ref{lemma:proj}, \revFirst{a ball of radius $\tilde{r}_{x_i}$ rolls freely in} $(\psi\circ f)(B(\bar{x},r_{x_i}))$, since $(\psi\circ f)(B(\bar{x},r_{x_i}))=(\psi\circ f\circ\varphi^{-1})(\varphi(B(\bar{x},r_{x_i})))
=\pi(\varphi(B(\bar{x},r_{x_i})))$. 

\item By \revFirst{Corollary \ref{cor:rsmooth:diffeo:rolling}}, \revFirst{for some $R_{x_i}>0$, a ball of radius $R_{x_i}$ rolls freely in} $f(B(\bar{x},r_{x_i}))$, since 
$\psi^{-1}$ is a diffeomorphism.
\end{itemize}
Since \revFirst{a ball of radius $R_{x_i}$ rolls freely in} $f(B(\bar{x},r_{x_i}))$ 
and $y\in\partial f(B(\bar{x},r_{x_i}))$,  
for any $0<R\leq R_{x_i}$, 
there exists a ball $B(\bar{y},R)$ such that $y\in B(\bar{y},R)\subseteq f(B(\bar{x},r_{x_i}))\subseteq\Y$.   

\item \textit{Step 3: Show that there is a ball of fixed radius $R$ at any $y\in\partial\Y$ inside $\Y$.} Let $R=\inf_{i\in I}R_{x_i}$. Since $R_{x_i}>0$ for all $i\in I$ and $I$ is finite, %
$R>0$. Since $0<R\leq R_{x_i}$ for all $i\in I$, we apply \textit{Step 2} and obtain that for any $y\in\partial\Y$, 
there exists a ball $B(\bar{y},R)$ such that $y\in B(\bar{y},R)\subseteq\Y$. 
\end{itemize}
Thus, a ball of radius $R$ rolls freely in $\Y$ for some $R>0$. 
\begin{figure}[t]
\centering
\includegraphics[width=0.6\linewidth]{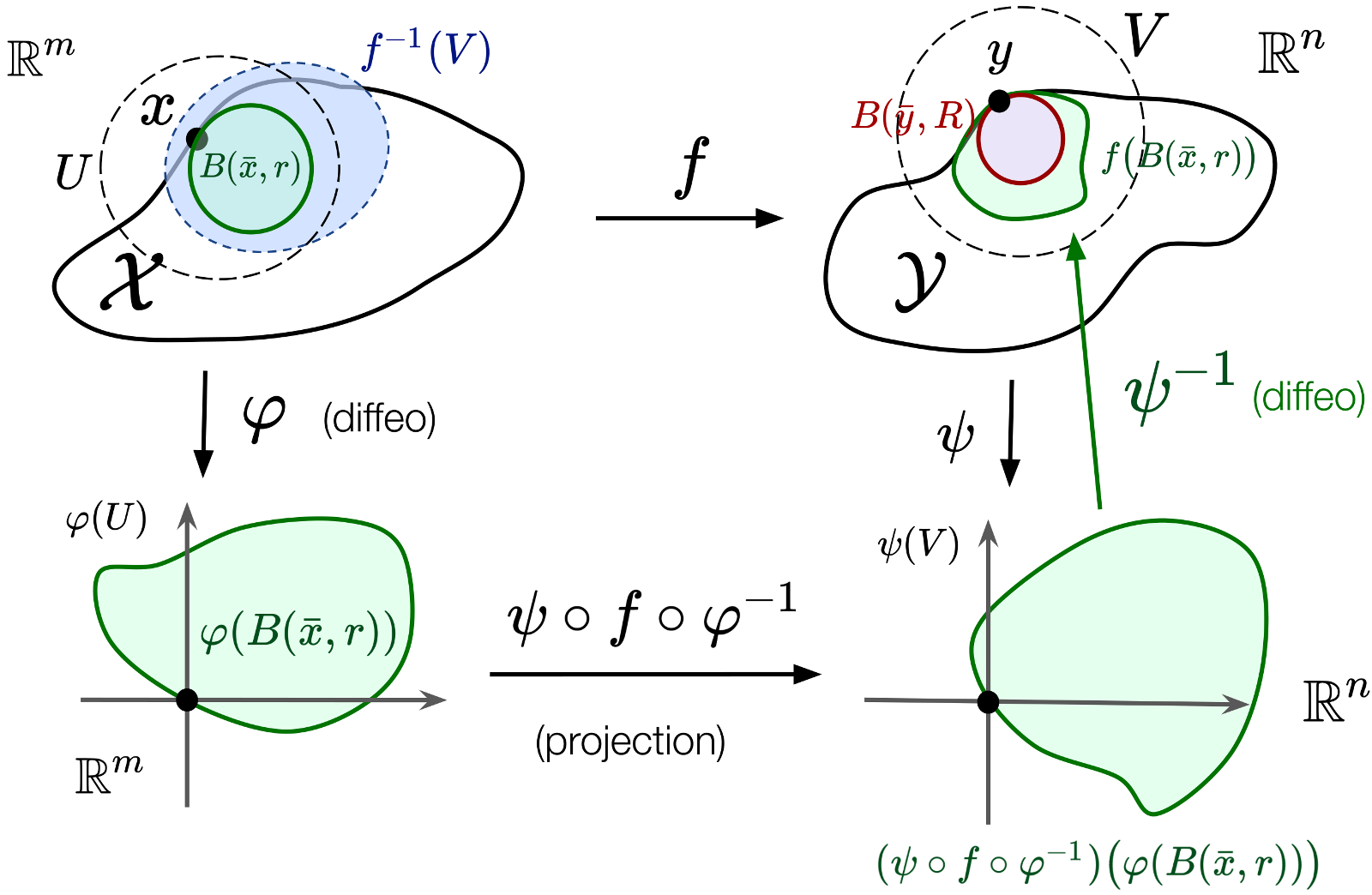}
\caption{Definitions for the proof of \rev{Lemma \ref{lem:submersion_rolling_ball_Y}}.}
\label{fig:ift:proof_of_thm:submersion_rsmooth}
\end{figure}
\end{proof}

\subsection{\rev{Smoothness properties are preserved when taking the convex hull}}\label{sec:smoothness_output:hull}
\rev{The next lemma states that showing that a ball rolls freely in $\Y$ is sufficient to conclude that the convex hull of $\Y$ is $R$-convex. This result formalizes the idea that intersections (see Example \ref{ex:intersection}) disappear after taking the convex hull.}
\begin{lemma}[\rev{$\hull(\Y)$ is $R$-smooth if a ball of radius $R$ rolls freely in $\Y$}]\label{lem:rsmooth:hull}
Let $R>0$ and $\Y\subset\R^n$ be a non-empty compact set. Assume that \rev{a ball of radius $R$ rolls freely in} $\Y$. Then, $\hull(\Y)$ is $R$-smooth.
\end{lemma}

\begin{proof}%
Since $\hull(\Y)$ is convex, a ball of radius $R$ rolls freely in $\overline{\hull(\Y)^\comp}$ by Lemma \ref{lem:convex_rolling_outside}. 
Next, we show that a ball of radius $R$ rolls freely inside $\hull(\Y)$. 
\noindent\begin{minipage}{0.65\linewidth}
\hspace{5mm} %
To do so, we decompose the boundary $\partial\hull(\Y)$ as the \rev{union} $$\partial\hull(\Y)=\rev{(\partial\hull(\Y)\cap\partial\Y)\cup(\partial\hull(\Y)\setminus\partial\Y)}$$
and study boundary points in these two subsets. %
We represent this decomposition in Figure \ref{fig:decomposition_hull}. 
\begin{itemize}[leftmargin=5mm]
\item First, let $y\in\partial\hull(\Y)\cap\partial\Y$.  Since \rev{a ball of radius $R$ rolls in} $\Y$, there exists a ball $B(\bar{y},R)$ such that $y\in B(\bar{y},R)\subseteq\Y\subseteq\hull(\Y)$.
\end{itemize}
\end{minipage}%
\hspace{0.045\linewidth}
\begin{minipage}{0.3\linewidth}
	\vspace{-1mm}
    \centering	\includegraphics[width=0.85\linewidth]{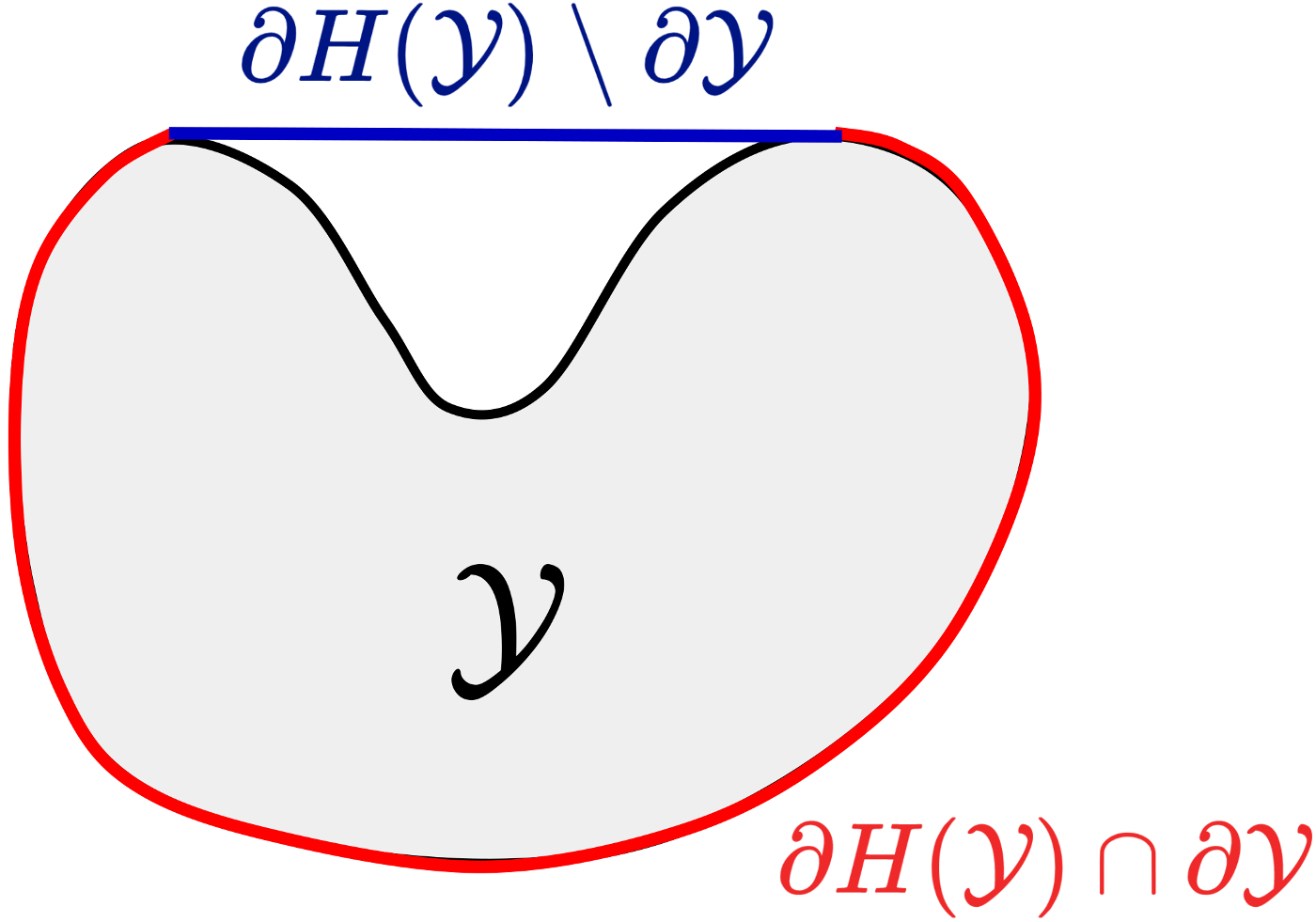}
\captionof{figure}{Decomposition of the boundary of $\hull(\Y)$.}
\label{fig:decomposition_hull}
\end{minipage}%
 
\noindent\begin{minipage}{0.65\linewidth}
\begin{itemize}[leftmargin=5mm]
\item Second, let \rev{$y\in\partial\hull(\Y)\setminus\partial\Y$}. 
\rev{Then, there exist $y_1,y_2\in\Y$ and $t\in(0,1)$ such that $y=ty_1+(1-t)y_2$. If $y_1\in\Int(\Y)$ or $y_2\in\Int(\Y)$, then $y\in\Int(\hull(\Y))$. Indeed, if $y_1\in\Int(\Y)$ (the proof for the case $y_2\in\Int(\Y)$ is identical), then $B(y_1,\epsilon)\subseteq\Y$ for some $\epsilon>0$, see Figure \ref{fig:cone}. Then, $B(y,t\epsilon)\subseteq\hull(\Y)$. Indeed, any $z\in B(y,t\epsilon)$ can be written as $z=y+t\epsilon u$ for some unit-norm $u$. Then, $z=t(y_1+\epsilon u)+(1-t)y_2$, so $z\in\hull(B(y_1,\epsilon)\cup\{y_2\})\subseteq\hull(\Y)$, so $B(y,t\epsilon)\subseteq\hull(\Y)$, which implies that $y\in\Int(\hull(\Y))$. This contradicts $y\in\partial\hull(\Y)$, so $y_1,y_2\in\partial\Y$.}


Since \rev{$y_1,y_2\in\partial\Y$ and} \rev{a ball of radius $R$ rolls in} $\Y$, there exist two balls $B(\bar{y}_1,R)$ and $B(\bar{y}_2,R)$ satisfying $y_1\in B(\bar{y}_1,R)\subseteq\Y\subseteq\hull(\Y)$ and $y_2\in B(\bar{y}_2,R)\subseteq\Y\subseteq\hull(\Y)$\rev{, see Figure \ref{fig:tangentballmilman}}. 
Define $\bar{y}=t\bar{y}_1+(1-t)\bar{y_2}$. 
Then, the ball $B(\bar{y},R)$ satisfies $y\in B(\bar{y},R)\subseteq\hull(\Y)$, since $B(\bar{y},R)\subset\hull(B(\bar{y}_1,R)\cup B(\bar{y}_2,R))\subseteq\hull(\Y)$.
\end{itemize}
\end{minipage}%
\hspace{0.045\linewidth}
\begin{minipage}{0.3\linewidth}
\vspace{4mm}
    \centering	\includegraphics[width=1\linewidth]{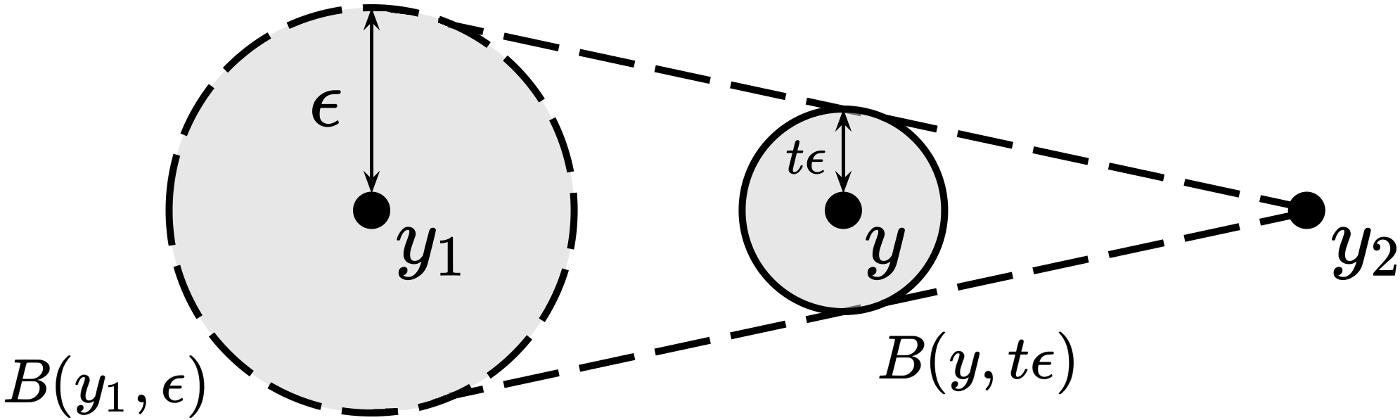}
    \captionof{figure}{\rev{If $y=ty_1+(1-t)y_2$, then $B(y,t\epsilon)\subseteq\hull(B(y_1,\epsilon)\cup y_2)$.}}
    \label{fig:cone}
    \centering
    \includegraphics[width=1\linewidth]{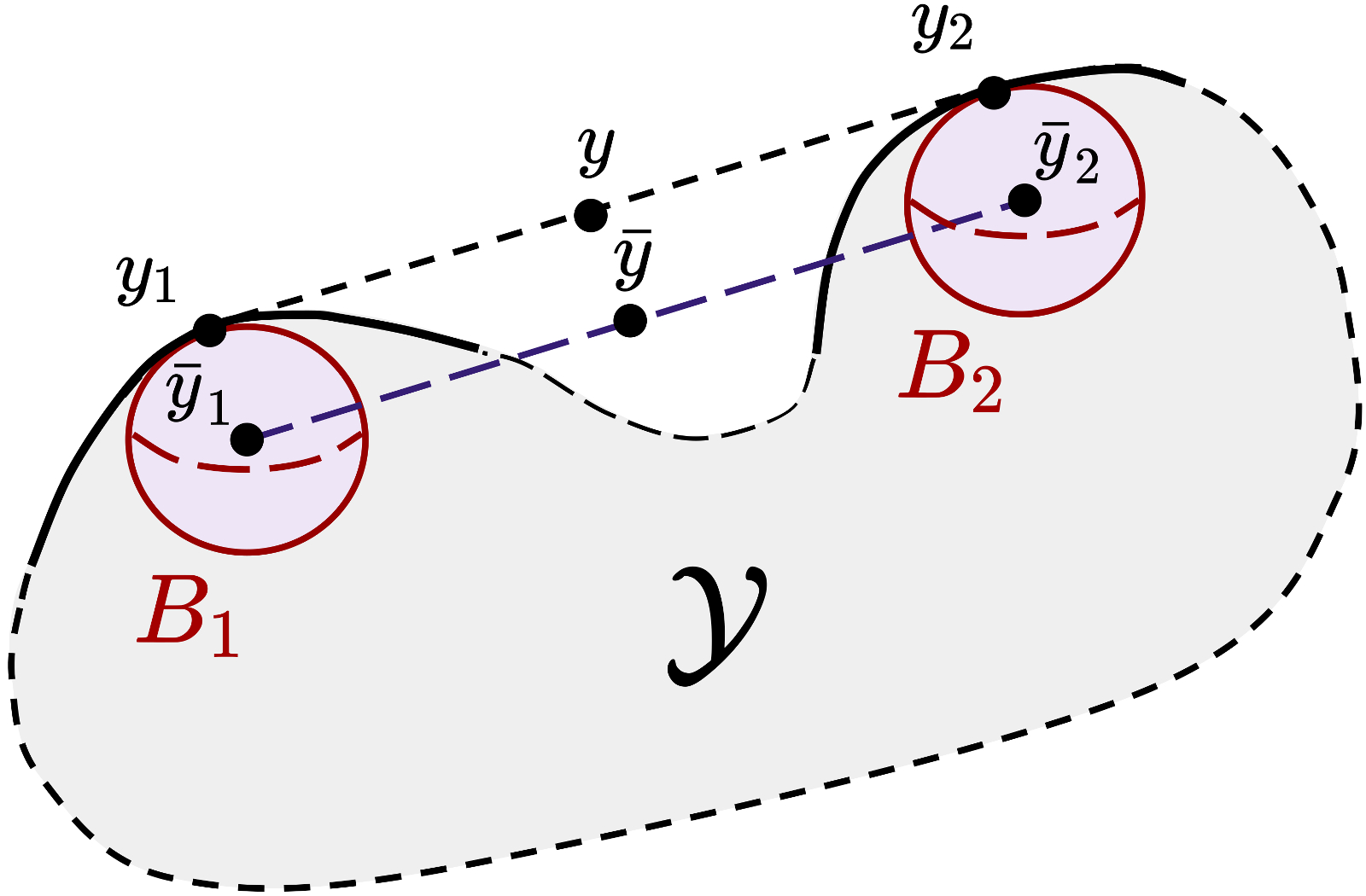}
    \captionof{figure}{Finding a ball tangent to \rev{$\partial\hull(\Y)\setminus\partial\Y$}.}
    \label{fig:tangentballmilman}
\end{minipage}%

By the last two steps, a ball of radius $R$ rolls freely in $\hull(\Y)$ and in $\overline{\hull(\Y)^\comp}$. 
We conclude that $\hull(\Y)$ is $R$-smooth.
\end{proof}


\subsection{\rev{Proof of Theorem \ref{thm:submersion_rsmooth} ($\hull(\Y)$ is $R$-smooth)}}\label{sec:smoothness_output:conclusion}
\rev{Theorem \ref{thm:submersion_rsmooth} follows directly from Lemmas \ref{lem:submersion_rolling_ball_Y} and \ref{lem:rsmooth:hull}.}
%

%
%
%
%
%
\begin{proof}[Proof of Theorem \ref{thm:submersion_rsmooth}]
\rev{By Lemma \ref{lem:submersion_rolling_ball_Y}, a ball of radius $R>0$ rolls freely in $\Y$. The conclusion follows from Lemma \ref{lem:rsmooth:hull}.} 
\end{proof}

\begin{corollary}\label{cor:sub_hull:boundary_manifold}
Let $r>0$, $\X\subset\R^m$ be a non-empty compact set \rev{such that a ball of radius $r$ rolls freely in $\X$}, 
$f:\R^m\to\R^n$ be a $C^1$ submersion, and $\Y=f(\X)$. 
Then, $\partial\hull(\Y)$ is a submanifold of dimension $(n-1)$.
\end{corollary}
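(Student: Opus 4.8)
The plan is to obtain the corollary as an almost immediate consequence of Theorem \ref{thm:submersion_rsmooth} together with the equivalence of characterizations in Theorem \ref{thm:walther1999}, applied directly to the set $\hull(\Y)$. All the substantive geometric content—that $\hull(\Y)$ is $R$-smooth for some $R>0$—is already established in Theorem \ref{thm:submersion_rsmooth}, so what remains is to verify that $\hull(\Y)$ meets the standing hypotheses of Theorem \ref{thm:walther1999} and then to read off the submanifold conclusion from its fourth characterization.

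First I would check the hypotheses of Theorem \ref{thm:walther1999}. The set $\Y=f(\X)$ is compact as the continuous image of the compact set $\X$, and its convex hull $\hull(\Y)\subset\R^n$ is therefore also compact, since the convex hull of a compact subset of $\R^n$ is compact by Carath\'{e}odory's theorem. Moreover, $\hull(\Y)$ is convex and hence path-connected, and it is non-empty because $\X$ is non-empty. Thus $\hull(\Y)$ is a non-empty path-connected compact subset of $\R^n$, exactly as required by Theorem \ref{thm:walther1999}.

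Next I would translate the $R$-smoothness provided by Theorem \ref{thm:submersion_rsmooth} into the precise form needed to invoke the equivalence. By Lemma \ref{lem:rsmooth_implies_lambdasmooth}, the $R$-smoothness of $\hull(\Y)$ implies that $\hull(\Y)$ is $\lambda$-smooth for all $0\leq\lambda\leq R$, which is precisely the third characterization in Theorem \ref{thm:walther1999}. By the equivalence of the four statements in that theorem, the fourth characterization then holds: $\partial\hull(\Y)$ is an $(n-1)$-dimensional submanifold of $\R^n$ (in fact with an outward-pointing unit-normal field that is $(1/R)$-Lipschitz, although only the submanifold conclusion is needed here).

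Since all the genuinely nontrivial work is carried out in Theorem \ref{thm:submersion_rsmooth}, I do not expect any real obstacle in this argument. The only point requiring a moment of care is confirming that $\hull(\Y)$ satisfies the compactness and path-connectedness hypotheses of Theorem \ref{thm:walther1999}; these follow immediately from convexity of the hull and compactness of $f(\X)$, so the corollary is essentially a bookkeeping step chaining Theorem \ref{thm:submersion_rsmooth}, Lemma \ref{lem:rsmooth_implies_lambdasmooth}, and Theorem \ref{thm:walther1999}.
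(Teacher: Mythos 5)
Your proposal is correct and follows essentially the same route as the paper: invoke Theorem \ref{thm:submersion_rsmooth} for $R$-smoothness of $\hull(\Y)$, note path-connectedness from convexity, and read off the submanifold conclusion from Theorem \ref{thm:walther1999}. Your explicit use of Lemma \ref{lem:rsmooth_implies_lambdasmooth} to match the third characterization and your check of compactness are slightly more careful than the paper's terse version, but the argument is identical in substance.
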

\begin{proof}
$\hull(\Y)$ is $R$-smooth for some $R>0$ by Theorem \ref{thm:submersion_rsmooth}. Since $\hull(\Y)$ is always path-connected since it is convex, the conclusion follows from Theorem \ref{thm:walther1999}.
\end{proof}

Albeit $\hull(\Y)$ is $R$-smooth, the constant $R>0$ in Theorem \ref{thm:submersion_rsmooth} depends on the smoothness of local charts given by the rank theorem \rev{(see the proof of Lemma \ref{lem:submersion_rolling_ball_Y})}.  These smoothness constants are difficult to characterize. The error bounds we derive in Theorem \ref{thm:error_bound:smooth_boundary} are independent of this constant. Nevertheless, the fact that $\partial\hull(\Y)$ is an $(n-1)$-dimensional submanifold will be essential to our derivations \rev{in the next section}.

\section{Error bounds}\label{sec:error_bounds}
In this section, we derive error bounds for the reconstruction of the convex hull of $\Y=f(\X)$ from a sample. In Section \ref{sec:error_bounds:lipschitz}, as a baseline, we first derive a coarse error bound using a standard $\delta$-covering argument. %
In Section \ref{sec:error_bounds:smooth}, we derive tighter error bounds that exploit the smoothness of the boundary $\partial\hull(\Y)$ \rev{and prove Theorem \ref{thm:error_bound:smooth_boundary}}. \rev{In Sections \ref{sec:thm:error_bound:smooth_boundary:discussion} and \ref{sec:error_bounds:diffeo}, we discuss Theorem \ref{thm:error_bound:smooth_boundary} and give a simpler proof of a more conservative  error bound under the additional assumption that $f$ is a diffeomorphism.}

\subsection{Naive error bound via covering ($f$ is Lipschitz)}\label{sec:error_bounds:lipschitz}
Lemma \ref{lem:Hausdorff_hulls} provides sufficient conditions to obtain an $\epsilon$-accurate approximation of the convex hull.
\begin{lemma}%
\label{lem:Hausdorff_hulls}
Let $\epsilon\geq 0$, $A,\Y\in\K$ satisfy
$\partial\Y\subseteq A+B(0,\epsilon)$ and $A\subseteq \Y$. 
Then, $\dH(\hull(\Y), \hull(A))\leq \epsilon$.
\end{lemma}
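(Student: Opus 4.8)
The plan is to bound the two one-sided Hausdorff distances separately, namely $\sup_{y\in\hull(A)}d_{\hull(\Y)}(y)$ and $\sup_{x\in\hull(\Y)}d_{\hull(A)}(x)$, and to show each is at most $\epsilon$. The first is immediate: since $A\subseteq\Y$, monotonicity of the convex hull gives $\hull(A)\subseteq\hull(\Y)$, so $d_{\hull(\Y)}(y)=0$ for every $y\in\hull(A)$ and this supremum vanishes. The entire content of the lemma thus lies in the second term.

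To control $\sup_{x\in\hull(\Y)}d_{\hull(A)}(x)$, I would establish the inclusion $\hull(\Y)\subseteq\hull(A)+B(0,\epsilon)$; indeed, if $x=c+b$ with $c\in\hull(A)$ and $\|b\|\leq\epsilon$, then $d_{\hull(A)}(x)\leq\|x-c\|\leq\epsilon$, which is exactly the desired bound. The set $\hull(A)+B(0,\epsilon)$ is convex (a Minkowski sum of convex sets) and compact, and by hypothesis it contains $A+B(0,\epsilon)\supseteq\partial\Y$. Hence it suffices to show that $\hull(\Y)\subseteq\hull(\partial\Y)$, because then $\hull(\Y)\subseteq\hull(\partial\Y)\subseteq\hull\!\big(A+B(0,\epsilon)\big)\subseteq\hull(A)+B(0,\epsilon)$, where the last inclusion uses that $\hull(A)+B(0,\epsilon)$ is convex and already contains $A+B(0,\epsilon)$.

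The crux is therefore the inclusion $\hull(\Y)\subseteq\hull(\partial\Y)$, i.e. that the convex hull of $\Y$ is already generated by its boundary, and I would argue this through extreme points. Every extreme point $e$ of the compact convex set $\hull(\Y)$ lies in $\Y$ (by Carath\'eodory, $e$ is a convex combination of points of $\Y$, and extremality forces that combination to be trivial), and since every extreme point lies on $\partial\hull(\Y)$ while monotonicity of the interior gives $\Int(\Y)\subseteq\Int(\hull(\Y))$, such an $e$ cannot be interior to $\Y$; thus $e\in\Y\setminus\Int(\Y)=\partial\Y$. This shows $\Extreme(\hull(\Y))\subseteq\partial\Y$, and the Krein--Milman theorem \cite{Grothendieck1973} (applicable since $\hull(\Y)$ is compact and convex) yields $\hull(\Y)=\hull(\Extreme(\hull(\Y)))\subseteq\hull(\partial\Y)$. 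Combining this with the chain of inclusions above closes the argument. The main obstacle is precisely this reduction to the boundary: one must rule out that an interior portion of $\Y$ protrudes from $\hull(A)+B(0,\epsilon)$, and the extreme-point argument is what guarantees that only the boundary points of $\Y$ — which the hypothesis $\partial\Y\subseteq A+B(0,\epsilon)$ controls — can influence the convex hull.
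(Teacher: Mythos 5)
Your proof is correct and follows essentially the same route as the paper: both arguments reduce to the two inclusions $\hull(A)\subseteq\hull(\Y)$ and $\hull(\Y)\subseteq\hull(A)+B(0,\epsilon)$, with the latter hinging on $\hull(\Y)\subseteq\hull(\partial\Y)$. The only difference is that you supply a full extreme-point/Krein--Milman justification of $\hull(\Y)=\hull(\partial\Y)$, which the paper's proof asserts without comment, and you use minimality of the convex hull in place of the identity $\hull(A+B(0,\epsilon))=\hull(A)+B(0,\epsilon)$; both are fine.
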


\begin{proof} An equivalent definition of the Hausdorff distance %
is $\dH(\hull(\Y),\hull(A))=\min\{\epsilon\geq 0: \hull(\Y)\subseteq\hull(A)+B(0,\epsilon),\hull(A)\subseteq\hull(\Y)+B(0,\epsilon)\}$ \cite{Schneider2014}.

$\partial\Y\subseteq A+B(0,\epsilon)$ implies that $\hull(\Y)=\hull(\partial\Y)\subseteq\rev{\hull(A+B(0,\epsilon))\subseteq\hull(A)+B(0,\epsilon)}$. 
$A\subseteq\Y$ implies %
that 
$\hull(A)\subseteq\hull(\Y)\subset\hull(\Y)+B(0,\epsilon)$. 
Thus, together, 
$\hull(\Y)\subseteq\hull(A)+B(0,\epsilon)$
and $\hull(A)\subseteq\hull(\Y)+B(0,\epsilon)$
imply that $\dH(\hull(A),\hull(\Y))\leq\epsilon$. 
\end{proof}

Combined with a standard covering argument, Lemma \ref{lem:Hausdorff_hulls} allows deriving an error bound for the convex hull reconstruction from a sample $Z_\delta=\{x_i\}_{i=1}^M$ in $\X$.

\begin{lemma}\label{lem:error_bound:covering_lipschitz}
Let $\X\subset\R^m$ be a non-empty compact set, 
$f:\R^m\to\R^n$ be a $\bar{L}$-Lipschitz function, 
$\Y=f(\X)$, $\delta>0$, and $Z_\delta\subset\X$. Assuming that either
\begin{itemize}[leftmargin=6mm]\setlength\itemsep{0.5mm}
\item $Z_\delta$ is a $\delta$-cover of $\X$ (i.e., $\X\subseteq Z_\delta+B(0,\delta)$), or
\item $Z_\delta$ is a $\delta$-cover of $\partial\X$ (i.e., $\partial\X\subseteq Z_\delta+B(0,\delta)$) and $\partial\Y\subseteq f(\partial\X)$,%
\end{itemize}
then
$$
\dH(\hull(\Y),\hull(f(Z_\delta)))\leq\bar{L}\delta.
$$
\end{lemma}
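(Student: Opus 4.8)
The plan is to apply Lemma \ref{lem:Hausdorff_hulls} with $A=f(Z_\delta)$ and $\epsilon=\bar{L}\delta$, so the task reduces to verifying the two hypotheses of that lemma: namely that $f(Z_\delta)\subseteq\Y$ and that $\partial\Y\subseteq f(Z_\delta)+B(0,\bar{L}\delta)$. The first inclusion is immediate in both cases, since $Z_\delta\subseteq\X$ implies $f(Z_\delta)\subseteq f(\X)=\Y$. The entire content of the proof therefore lies in establishing the covering inclusion $\partial\Y\subseteq f(Z_\delta)+B(0,\bar{L}\delta)$, and here the two bullet-point hypotheses will be handled separately but by the same underlying Lipschitz estimate.

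For the first case, where $Z_\delta$ is a $\delta$-cover of all of $\X$, I would actually prove the stronger inclusion $\Y\subseteq f(Z_\delta)+B(0,\bar{L}\delta)$ (which trivially contains $\partial\Y$). Take any $y\in\Y$; then $y=f(x)$ for some $x\in\X$. Since $\X\subseteq Z_\delta+B(0,\delta)$, there is some $x_i\in Z_\delta$ with $\|x-x_i\|\leq\delta$. Applying the Lipschitz property of $f$ gives $\|f(x)-f(x_i)\|\leq\bar{L}\|x-x_i\|\leq\bar{L}\delta$, so $y=f(x)\in f(x_i)+B(0,\bar{L}\delta)\subseteq f(Z_\delta)+B(0,\bar{L}\delta)$. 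For the second case, where $Z_\delta$ only covers the boundary $\partial\X$, I would take $y\in\partial\Y$ and use the additional hypothesis $\partial\Y\subseteq f(\partial\X)$ to write $y=f(x)$ with $x\in\partial\X$. Then the same argument applies verbatim, using that $\partial\X\subseteq Z_\delta+B(0,\delta)$ to find a nearby $x_i\in Z_\delta$ and concluding $\|f(x)-f(x_i)\|\leq\bar{L}\delta$.

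Having verified both hypotheses of Lemma \ref{lem:Hausdorff_hulls} in each case, the conclusion $\dH(\hull(\Y),\hull(f(Z_\delta)))\leq\bar{L}\delta$ follows directly by invoking that lemma. The proof is essentially a packaging exercise: the covering condition converts into a $\bar{L}\delta$-approximation of the boundary via the Lipschitz bound, and Lemma \ref{lem:Hausdorff_hulls} then does the work of transferring this boundary approximation to a Hausdorff bound on the convex hulls.

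I do not anticipate any serious obstacle here, since this is the intended ``baseline'' result and every step is a one-line application of a stated definition or lemma. The only point requiring mild care is recognizing that the two bullet-point cases demand slightly different arguments — in the first case one covers the entire set and the extra boundary hypothesis $\partial\Y\subseteq f(\partial\X)$ is unnecessary, whereas in the second case that hypothesis is exactly what is needed to ensure every boundary output $y\in\partial\Y$ arises as $f(x)$ for an $x\in\partial\X$ that the cover $Z_\delta$ reaches. Recall that $\partial\Y\subseteq f(\partial\X)$ holds automatically when $f$ is a continuous open map (Lemmas \ref{lem:submersion_open_map} and \ref{lem:open_map_boundary}), which is why the submersion hypothesis in Theorem \ref{thm:error_bound:smooth_boundary} makes this covering bound applicable in the boundary-sampling setting.
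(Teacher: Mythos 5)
Your proof is correct and follows exactly the paper's own argument: verify that $f(Z_\delta)\subseteq\Y$ and that $f(Z_\delta)$ is an $(\bar{L}\delta)$-cover of $\partial\Y$ (via the Lipschitz bound, handling the two covering hypotheses separately), then invoke Lemma \ref{lem:Hausdorff_hulls} with $A=f(Z_\delta)$ and $\epsilon=\bar{L}\delta$. The paper's proof is just a terser version of the same reasoning.
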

\begin{proof}

In both cases, $f(Z_\delta)$ is an $(\bar{L}\delta)$-cover of $\partial\Y$. %
Thus, $\partial\Y\subseteq f(Z_\delta)+B(0,\bar{L}\delta)$. Also, $f(Z_\delta)\subseteq\Y$ since $f(Z_\delta)\subseteq f(\X)=\Y$.  
The conclusion follows from Lemma \ref{lem:Hausdorff_hulls} with $A=f(Z_\delta)$ and $\epsilon=\bar{L}\delta$. 
\end{proof}

\subsection{Error bound via smoothness of the boundary ($f$ is $C^1$ and $\X$ is $r$-smooth)}\label{sec:error_bounds:smooth}
\rev{In this section, we prove Theorem \ref{thm:error_bound:smooth_boundary}. We proceed in three steps.}

\rev{First, we show that a bound on the distance to the tangent space $T_y\partial\hull(\Y)$ gives a Hausdorff distance error bound on the convex hull reconstruction.}
\begin{lemma}\label{lem:tangent_implies_hausdorff}
\rev{Let $\Y,A\subset\R^n$ be non-empty compact sets such that $A\subseteq\hull(\Y)$ and $\partial\hull(\Y)$ is a submanifold of dimension $(n-1)$. 
Then,
}
\begin{equation}\label{eq:tangent_implies_hausdorff}
\rev{\dH(\hull(\Y),\hull(A))\leq\sup_{y\in\partial\hull(\Y)\cap\partial\Y}\left(\Inf_{a\in A}d_{T_y\partial\hull(\Y)}(y-a)\right).
}
\end{equation}
\end{lemma}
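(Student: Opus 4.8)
The plan is to compare the two convex bodies through their support functions. Writing $K=\hull(\Y)$ and $K_A=\hull(A)$, recall that for a unit vector $u$ the support function is $h_K(u)=\sup_{x\in K}u^\top x$, and that for convex bodies $\dH(K,K_A)=\sup_{\norm{u}=1}|h_K(u)-h_{K_A}(u)|$ \cite{Schneider2014}. The hypothesis $A\subseteq\hull(\Y)$ gives $K_A\subseteq K$, hence $h_{K_A}\leq h_K$, so this collapses to $\dH(K,K_A)=\sup_{\norm{u}=1}(h_K(u)-h_{K_A}(u))$. It therefore suffices to bound $h_K(u)-h_{K_A}(u)$ for each fixed unit vector $u$ by the right-hand side of the claim.

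Fix a unit vector $u$. Since the support function of a set and of its convex hull coincide, $h_K(u)=\sup_{y\in\Y}u^\top y$ is attained at some $y^*\in\Y$ by compactness of $\Y$, while $h_{K_A}(u)=\sup_{a\in A}u^\top a$; hence $h_K(u)-h_{K_A}(u)=\inf_{a\in A}u^\top(y^*-a)$. I would next check that $y^*\in\partial K\cap\partial\Y$: the maximizer of a nonzero linear functional over the convex body $K$ must lie on $\partial K$, and a point of $\Y$ lying on $\partial K$ cannot be interior to $\Y$ (otherwise it would be interior to $K\supseteq\Y$), so $y^*\in\partial\Y$ as well. Because $\partial K$ is a smooth $(n-1)$-dimensional submanifold and $K$ is convex, the supporting hyperplane of $K$ at $y^*$ is unique and equals the tangent hyperplane $y^*+T_{y^*}\partial K$; its outward unit normal is thus $n^K(y^*)$, and since $u$ is the outward direction maximized at $y^*$ we conclude $u=n^K(y^*)$.

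It then remains to turn the support-function gap into a tangent-space distance. For every $a\in A\subseteq K$ we have $u^\top(y^*-a)\geq 0$, so $u^\top(y^*-a)=|(y^*-a)^\top n^K(y^*)|$, which by Lemma \ref{lem:dist_TxM_normal} equals $d_{T_{y^*}\partial K}(y^*-a)$. Consequently $h_K(u)-h_{K_A}(u)=\inf_{a\in A}d_{T_{y^*}\partial K}(y^*-a)\leq\sup_{y\in\partial K\cap\partial\Y}\inf_{a\in A}d_{T_y\partial K}(y-a)$, and taking the supremum over unit $u$ yields the stated bound. I expect the main obstacle to be the geometric bookkeeping of the middle paragraph: verifying that the maximizer $y^*$ lands in $\partial\hull(\Y)\cap\partial\Y$ and, crucially, that the smoothness of $\partial\hull(\Y)$ forces the maximizing direction $u$ to coincide with the outward unit normal $n^K(y^*)$, since it is exactly this identification that allows Lemma \ref{lem:dist_TxM_normal} to convert $h_K(u)-h_{K_A}(u)$ into the advertised distance to $T_y\partial\hull(\Y)$.
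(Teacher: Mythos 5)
Your proof is correct and follows essentially the same route as the paper's: express the Hausdorff distance between the two convex hulls via support functions, identify the maximizing direction with the outward unit normal of the smooth boundary $\partial\hull(\Y)$, and convert the resulting support gap into a distance to $T_{y}\partial\hull(\Y)$ via Lemma \ref{lem:dist_TxM_normal}. The one place you diverge is a welcome simplification: by maximizing $u^\top y$ over $\Y$ rather than over $\hull(\Y)$ (legitimate since a set and its convex hull share the same support function), your extremal point $y^*$ lands in $\partial\hull(\Y)\cap\partial\Y$ automatically, whereas the paper first takes a maximizer on $\partial\hull(\Y)$ and then needs a Krein--Milman argument (its footnote) to replace it by a point of $\partial\Y$ achieving the same support value.
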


\newpage
\rev{Second, we derive a bound %
on distances to the tangent spaces %
$T_y\partial\hull(\Y)$.
\begin{lemma}
\label{lem:bound_delta_Tphull}
\rev{Let $r>0$,  $\X\subset\R^m$ be a non-empty path-connected $r$-smooth compact set, 
$f:\R^m\to\R^n$ be a $C^1$ submersion such that $(f,\dd f)$ are $(\bar{L},\bar{H})$-Lipschitz, 
$\Y=f(\X)$, and  
$x,z\in\partial\X$ \rev{be such that $y=f(x)\in\partial\Y\cap\partial\hull(\Y)$}. Then, 
}
\begin{equation}
\rev{d_{T_y\partial\hull(\Y)}(f(z)-\rev{f(x)})\leq \frac{1}{2}\left(\frac{\bar{L}}{r}+\bar{H}\right)\|z-x\|^2.}
\end{equation}
\end{lemma}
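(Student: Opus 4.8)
The plan is to reduce the quantity $d_{T_y\partial\hull(\Y)}(f(z)-y)$ to a single scalar — its component along the outward unit normal $n := n^{\partial\hull(\Y)}(y)$ of $\partial\hull(\Y)$ at $y$ — and then bound that scalar using a first-order expansion of $f$ together with the curvature of $\partial\X$. First, note that $\partial\hull(\Y)$ is a genuine $(n-1)$-dimensional submanifold by Corollary \ref{cor:sub_hull:boundary_manifold}, so $n$ and $T_y\partial\hull(\Y)=n^\perp$ are well defined, and Lemma \ref{lem:dist_TxM_normal} gives $d_{T_y\partial\hull(\Y)}(f(z)-y)=|(f(z)-y)^\top n|$. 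Since $f(z)\in\Y\subseteq\hull(\Y)$ and $n$ is the outward normal of the supporting hyperplane of the convex set $\hull(\Y)$ at $y$, we have $(f(z)-y)^\top n\le 0$, so $d_{T_y\partial\hull(\Y)}(f(z)-y)=(y-f(z))^\top n=(f(x)-f(z))^\top n$.

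The key geometric ingredient (stated separately as Lemma \ref{lem:tangent_mapped_to_tangent_hull}) is that $\dd f_x(T_x\partial\X)=T_y\partial\hull(\Y)$, equivalently $(\dd f_x(v))^\top n=0$ for every $v\in T_x\partial\X$. I would prove this from the same supporting-hyperplane observation: the scalar function $g(x'):=(f(x')-y)^\top n$ satisfies $g(x')\le 0$ for all $x'\in\X$ (as $f(x')\in\hull(\Y)$) and $g(x)=0$, so $x$ globally maximizes $g$ over $\X$. Because $\X$ is $r$-smooth, $\partial\X$ is an $(m-1)$-dimensional submanifold (Theorem \ref{thm:walther1999}); testing maximality of $g$ along smooth curves in $\partial\X$ through $x$ forces $\grad g(x)=\dd f_x^\top n\in N_x\partial\X$, whence $(\dd f_x(v))^\top n=v^\top(\dd f_x^\top n)=0$ for all $v\in T_x\partial\X$, i.e. $\dd f_x(T_x\partial\X)\subseteq n^\perp=T_y\partial\hull(\Y)$. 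Equality then follows by a dimension count: $\dd f_x$ is surjective and $\R^m=T_x\partial\X\oplus N_x\partial\X$ with $\dim N_x\partial\X=1$, so $\dim\dd f_x(T_x\partial\X)\ge n-1=\dim T_y\partial\hull(\Y)$.

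With this in hand, I would expand $f$ to first order. Since $\dd f$ is $\bar H$-Lipschitz, the fundamental theorem of calculus gives the $C^{1,1}$ remainder estimate $\|f(z)-f(x)-\dd f_x(z-x)\|\le\tfrac{\bar H}{2}\|z-x\|^2$. Writing $z-x=\pi_x^\top(z-x)+\pi_x^\perp(z-x)$ for the projections onto $T_x\partial\X$ and $N_x\partial\X$, the tangential part satisfies $(\dd f_x(\pi_x^\top(z-x)))^\top n=0$ by the previous step, so $(\dd f_x(z-x))^\top n=(\dd f_x(\pi_x^\perp(z-x)))^\top n$. Since $\reach(\partial\X)\ge r$ (Lemma \ref{lem:roll_implies_reach}) and $x,z\in\partial\X$, Theorem \ref{thm:reach} yields $\|\pi_x^\perp(z-x)\|=d_{T_x\partial\X}(z-x)\le\|z-x\|^2/(2r)$; combined with $\|\dd f_x\|\le\bar L$ (as $f$ is $\bar L$-Lipschitz), this bounds $|(\dd f_x(z-x))^\top n|\le\tfrac{\bar L}{2r}\|z-x\|^2$. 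Collecting the two sources of error,
\begin{equation*}
d_{T_y\partial\hull(\Y)}(f(z)-y)=(f(x)-f(z))^\top n\le|(\dd f_x(z-x))^\top n|+\tfrac{\bar H}{2}\|z-x\|^2\le\tfrac{1}{2}\Big(\tfrac{\bar L}{r}+\bar H\Big)\|z-x\|^2,
\end{equation*}
which is the claimed bound.

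The main obstacle is the tangent-to-tangent identity $\dd f_x(T_x\partial\X)=T_y\partial\hull(\Y)$: it is where the submersion hypothesis, the smoothness of $\partial\X$, and crucially the fact that $y$ lies on $\partial\hull(\Y)$ (not merely on $\partial\Y$) all enter. The supporting-hyperplane/optimality argument above is clean, but one must check that $y\in\partial\hull(\Y)$ is exactly what makes $n$ a supporting direction for all of $\X$, and that the identity holds for \emph{every} preimage $x\in\partial\X$ of $y$ rather than a distinguished one. By contrast, the remaining steps — the $C^{1,1}$ remainder estimate and the normal-component decomposition — are routine once the two curvature contributions, $\bar H$ from $\dd f$ and $1/r$ from $\partial\X$, are separated.
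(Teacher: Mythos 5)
Your proof is correct, and its quantitative core --- splitting $z-x$ into components tangential and normal to $T_x\partial\X$, killing the tangential contribution via $\dd f_x(T_x\partial\X)\subseteq T_y\partial\hull(\Y)$, bounding the normal component by $\|z-x\|^2/(2r)$ through Federer's characterization of the reach (Theorem \ref{thm:reach} together with Lemma \ref{lem:roll_implies_reach}), and absorbing the Taylor remainder with the $\bar{H}$-Lipschitzness of $\dd f$ --- is exactly the paper's argument, written with the scalar $(f(z)-y)^\top n$ in place of the projection $\pi_y^\perp$ and with the integral form of the remainder replaced by the equivalent $C^{1,1}$ estimate. Where you genuinely depart from the paper is in how you obtain the tangent-to-tangent property. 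The paper isolates this as Lemma \ref{lem:tangent_mapped_to_tangent_hull} and proves it in the appendix via the rank theorem, a transversality argument for the intersection of a slice with a tangent ball, and a chain of intermediate properties; you instead observe that $g(x')=(f(x')-y)^\top n\le 0$ on $\X$ with equality at $x$ (because $n$ supports the convex set $\hull(\Y)$ at $y$), so first-order optimality along curves in the submanifold $\partial\X$ forces $n^\top\dd f_x(v)=0$ for all $v\in T_x\partial\X$. This is a substantial shortcut: the inclusion $\dd f_x(T_x\partial\X)\subseteq T_y\partial\hull(\Y)$ is all the bound actually needs (the reverse inclusion, which you recover by a dimension count using surjectivity of $\dd f_x$, is never used), and your argument applies to every preimage $x\in\partial\X$ of $y$, as the statement requires. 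Note that you still rely on Corollary \ref{cor:sub_hull:boundary_manifold} (hence on Theorem \ref{thm:submersion_rsmooth}) to know that $\partial\hull(\Y)$ is a $(n-1)$-dimensional submanifold, so that $n$ and $T_y\partial\hull(\Y)=n^\perp$ are well defined and the supporting hyperplane at $y$ has normal $n$; the heavy machinery is therefore not avoided entirely, only the rank-theorem computation behind Lemma \ref{lem:tangent_mapped_to_tangent_hull}.
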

Third, we combine Lemmas \ref{lem:tangent_implies_hausdorff} and \ref{lem:bound_delta_Tphull}, which gives Theorem \ref{thm:error_bound:smooth_boundary}.
}

\begin{remark}
[\rev{Does Theorem \ref{thm:error_bound:smooth_boundary} hold if a ball rolls freely in $\X$, but $\X$ is not $r$-smooth?}] \label{remark:rolling_ball_but_not_rsmooth} 
\rev{The facts that $\hull(\Y)$ is $R$-smooth (Theorem \ref{thm:submersion_rsmooth}) and $\partial\hull(\Y)$ is an $(n-1)$-dimensional submanifold (Corollary \ref{cor:sub_hull:boundary_manifold}) are crucial, as they ensure that the tangent spaces $T_y\partial\hull(\Y)$ are well-defined and $(n-1)$-dimensional (a property used in the proof of Lemma \ref{lem:tangent_implies_hausdorff}). 
Similarly, the $r$-smoothness of $\X$ ensures that the tangent spaces $T_x\partial\X$ are well-defined and facilitates the use of standard tools from differential geometry (see Lemma \ref{lem:tangent_mapped_to_tangent_hull}). We conjecture that Theorem \ref{thm:error_bound:smooth_boundary} holds if a ball of radius $r>0$ rolls freely in $\X$, since $\hull(\Y)$ is $R$-smooth under this assumption (Theorem \ref{thm:submersion_rsmooth}). Proving this result, if it holds, would require a significantly different approach.}
\end{remark}
\subsubsection{Bound on $d_{T_y\partial\hull(\Y)}$ implies bound on Hausdorff distance 
\rev{(proof of Lemma \ref{lem:tangent_implies_hausdorff})}}
\rev{The proof of Lemma \ref{lem:tangent_implies_hausdorff} relies on the following result.}

\begin{lemma}\label{lem:dist_TxM_normal}
Let $\M\subset\R^n$ be a compact submanifold of dimension $(n-1)$, $x\in\M$, and \rev{$v\in\R^n$}. \rev{Let $n^\M(x)\in N_x\M$ be a} unit-norm normal vector of $\M$ at $x$, and $\pi_x^\top$ and $\pi_x^\perp$ \rev{be} the linear projection operators onto $T_x\M$ and $N_x\M$, respectively. Then, $d_{T_x\M}(v)
=\|\pi_x^\perp(v)\|=
|v^\top n^\M(x)
|$.
\end{lemma}
\begin{proof}
$d_{T_x\M}(v)
=
\inf_{w\in T_x\M}\|v-w\|
=
\|v-\pi_x^\top(v)\|$. Then, 
note that $(v-\pi_x^\top(v))\in N_x\M$ since $\pi_x^\top(v-\pi_x^\top(v))=0$ by linearity. Thus,  $v-\pi_x^\top(v)=\pi_x^\perp(v-\pi_x^\top(v))$ and we obtain that $d_{T_x\M}(v)
=
\|\pi_x^\perp(v)\|$. 
    To show that $\|\pi_x^\perp(v)\|=|v^\top n^\M(x)|$, we write $T_x\M=\{v\in\R^n:v^\top n^\M(x)=0\}$. Then, $\pi^\top(v)=v-(v^\top n^\M(x))n^\M(x)$ so $\pi^\perp(v)=(v^\top n^\M(x))n^\M(x)$ and the conclusion follows. 
\end{proof}

\begin{proof}[\rev{Proof of Lemma \ref{lem:tangent_implies_hausdorff}}]
For any non-empty convex compact set $C\subset\R^n$, the support function of $C$ is defined as 
$h(C,\cdot):\R^n\to\R, \ u\mapsto h(C,u)=\sup_{y\in C}y^\top u$  
and is convex and continuous \cite{Schneider2014}. 
$\hull(\Y)$ and $\hull(A)$ are both convex, non-empty, and compact. 
Thus, by \cite[Lemma 1.8.14]{Schneider2014}, 
\begin{align*}
\dH(\hull(\Y),\hull(A))
&=
\sup_{u\in\partial B(0,1)}
	|h(\hull(\Y),u)-h(\hull(A),u)|
=
	|h(\hull(\Y),u_0)-h(\hull(A),u_0)|
\\
&=
\bigg|\sup_{y\in\hull(\Y)}y^\top u_0-\sup_{q\in\hull(A)}q^\top u_0\bigg|
\end{align*}
for some $u_0\in\R^n$ with $\|u_0\|=1$. 

Let $y_0\in\partial\hull(\Y)$ be such that $h(\hull(\Y),u_0)=y_0^\top u_0$, so $y^\top u_0\leq y_0^\top u_0$ for all $y\in\hull(\Y)$. Then, $u_0=n^{\partial\hull(\Y)}(y_0)$ \cite{Dumbgen1996}, where $n^{\partial\hull(\Y)}$ denotes the unit-norm outward-pointing normal of $\partial\hull(\Y)$\rev{, and 
\begin{align}\label{eq:haussdorff_equality_normal_vectors_sup}
\dH(\hull(\Y),\hull(A))
&=
\bigg|y_0^\top n^{\partial\hull(\Y)}(y_0)-\sup_{q\in \hull(A)}q^\top n^{\partial\hull(\Y)}(y_0)\bigg|.
\end{align}
}

Without loss of generality, we may assume that $y_0\in\partial\hull(\Y)\cap\partial\Y$. Indeed, if $y_0\notin\partial\Y$, then there exists $y_1\in\partial\hull(\Y)\cap\partial\Y$ with \rev{$y_0^\top n^{\partial\hull(\Y)}(y_0)=y_1^\top n^{\partial\hull(\Y)}(y_1)$  and $n^{\partial\hull(\Y)}(y_0)=n^{\partial\hull(\Y)}(y_1)$}. 
\rev{Indeed, since $y_0\in\hull(\Y)$, there exist $y_1,y_2\in\partial\Y$ and $t\in(0,1)$ such that $y_0=ty_1+(1-t)y_2$ (see Lemma \ref{lem:rsmooth:hull} for a proof that $y_1,y_2\in\partial\Y$) and} 
 such that the line $L=\{y_s=sy_1 +(1-s)y_2, s\in(0,1)\}$ satisfies $L\subseteq\hull(\Y)$. 
Since \rev{$y_0\in\partial\hull(\Y)$}, $(y-y_0)^\top n^{\partial\hull(\Y)}(y_0)\leq 0$ for all $y\in\hull(\Y)$, so $(y_s-y_0)^\top n^{\partial\hull(\Y)}(y_0)\leq 0$ for all $s\in(0,1)$. 
Plugging in the values for $y_0$ and $y_s$, we obtain that $(s-t)(y_1-y_2)^\top n^{\partial\hull(\Y)}(y_0)\leq 0$ for all $s\in(0,1)$, 
which implies that $(y_1-y_2)^\top n^{\partial\hull(\Y)}(y_0)=0$. Since $y_2=(y_0-ty_1)/(1-t)$, \rev{we obtain} $(y_0-y_1)^\top n^{\partial\hull(\Y)}(y_0)=0$. 
\rev{From $(y_0-y_1)^\top n^{\partial\hull(\Y)}(y_0)=0$, we obtain $n^{\partial\hull(\Y)}(y_0)=n^{\partial\hull(\Y)}(y_1)$ since $\partial\hull(\Y)$ is an $(n-1)$-dimensional submanifold. Thus, without loss of generality, $y_0\in\partial\hull(\Y)\cap\partial\Y$.}
%
%
%
%
%
%
%
%
%
%
%
%
%
%
%
%
%
%
%
%
%

Thus, for some $y_0\in\partial\hull(\Y)\cap\partial\Y$, 
\begin{align*}
\dH(\hull(\Y),\hull(A))
\mathop{=}^{\eqref{eq:haussdorff_equality_normal_vectors_sup}}
\bigg|\inf_{q\in \hull(A)} (y_0-q)^\top n^{\partial\hull(\Y)}(y_0)\bigg|
=
\inf_{q\in \hull(A)} 
(y_0-q)^\top n^{\partial\hull(\Y)}(y_0),
\end{align*}
since $(y_0-q)^\top n^{\partial\hull(\Y)}(y_0)\geq 0$ for all $q\in \hull(A)$ because $\hull(A)\subseteq\hull(\Y)$.

\rev{Moreover, $\inf_{q\in \hull(A)} 
(y_0-q)^\top n^{\partial\hull(\Y)}(y_0)
\leq
\inf_{a\in A} 
(y_0-a)^\top n^{\partial\hull(\Y)}(y_0) 
$. Thus,}
\begin{align*}
\dH(\hull(\Y),\hull(A)\rev{)
\leq
\inf_{a\in A}}
(y_0-a)^\top n^{\partial\hull(\Y)}(y_0).
\end{align*}

Since $(y_0-a)^\top n^{\partial\hull(\Y)}(y_0)\geq 0$ for all $a\in A$ because $A\subseteq\hull(\Y)$, we obtain 
\begin{align*}
\dH(\hull(\Y),\hull(A))
&\ \rev{\leq}
\inf_{a\in A}
(y_0-a)^\top n^{\partial\hull(\Y)}(y_0)
=
\inf_{a\in A} \bigg|(y_0-a)^\top n^{\partial\hull(\Y)}(y_0)\bigg|
\\
&=
\inf_{a\in A}d_{T_{y_0}\partial\hull(\Y)}(y_0-a)
\end{align*}
for some $y_0\in\partial\hull(\Y)\cap\partial\Y$, where the last equality follows from Lemma \ref{lem:dist_TxM_normal}. 
The conclusion follows.
\end{proof}
Lemma \ref{lem:tangent_implies_hausdorff} implies that bounding the distance to the tangent space $T_y\partial\hull(\Y)$ at all $y\in\partial\hull(\Y)\cap\partial\Y$ suffices to obtain a bound on the Hausdorff distance. Indeed, $\partial\hull(\Y)\cap\partial\Y$ is compact and $y\mapsto\inf_{a\in A} d_{T_{\partial\hull(\Y)}}(y-a)=(y-a)^\top n^{\partial\hull(\Y)}(y)$ is continuous ($n^{\partial\hull(\Y)}(y)$ is continuous since $\partial\hull(\Y)$ is a submanifold \cite{Lee2018}), so the supremum in \eqref{eq:tangent_implies_hausdorff} is attained at some $y\in\partial\hull(\Y)\cap\partial\Y$.

\subsubsection{Bound on $d_{T_y\partial\hull(\Y)}$ if $f$ is a submersion \rev{(proof of Lemma \ref{lem:bound_delta_Tphull})}}\label{sec:error_bounds:bound_on_deltaTphull}
The proof of Lemma \ref{lem:bound_delta_Tphull} relies on Lemmas  \ref{lem:curve_intersects_ball} and \ref{lem:tangent_mapped_to_tangent_hull}, whose proofs are given in Section \ref{sec:error_bounds:bound_on_deltaTphull:proofs}.

\begin{lemma}[Curve intersecting a ball]\label{lem:curve_intersects_ball}
Let $r>0$, \rev{$B=B(0,r)\subset\R^n$}, $p\in\partial B$,  and $v\notin T_p\partial B$. 
Let $\epsilon>0$ and $\gamma:(-\epsilon,\epsilon)\to\R^n$ be a smooth curve with $\gamma(0)=p$ and $\gamma'(0)=v$. 

Then, there exists  $t\in(-\epsilon,\epsilon)$ such that $\gamma(t)\in\Int(B)$.
\end{lemma}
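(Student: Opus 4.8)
The plan is to reduce the statement to a one-dimensional monotonicity argument applied to the squared distance from $\gamma(t)$ to the center of the ball. First I would make the tangent space explicit: since $B$ is the closed ball of radius $r$ centered at the origin, its boundary $\partial B$ is the sphere of radius $r$, whose outward unit normal at $p$ is $p/r$. Consequently $T_p\partial B=\{w\in\R^n:w^\top p=0\}$, and the hypothesis $v\notin T_p\partial B$ is simply the statement that $p^\top v\neq 0$. This reformulation is what makes the curvature of the sphere irrelevant and reduces everything to a sign condition.

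Next I would introduce the smooth function $g:(-\epsilon,\epsilon)\to\R$ defined by $g(t)=\|\gamma(t)\|^2$. It is smooth because $\gamma$ is, and it encodes membership in the open ball exactly: $\gamma(t)\in\Int(B)$ if and only if $g(t)<r^2$. Differentiating gives $g'(t)=2\,\gamma(t)^\top\gamma'(t)$, so $g(0)=\|p\|^2=r^2$ and $g'(0)=2\,p^\top v$, which is nonzero by the reformulated hypothesis.

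Finally I would read off the conclusion from the sign of $g'(0)$. If $p^\top v>0$, then $g'(0)>0$, so $g$ is strictly increasing at $0$ and $g(t)<g(0)=r^2$ for all sufficiently small $t<0$; if $p^\top v<0$, then $g'(0)<0$ and $g(t)<r^2$ for all sufficiently small $t>0$. In either case there exists $t\in(-\epsilon,\epsilon)$ with $\|\gamma(t)\|<r$, i.e.\ $\gamma(t)\in\Int(B)$, which is precisely the assertion. There is no genuine obstacle in this argument; the only point requiring a little care is the identification of $T_p\partial B$ with the hyperplane $p^\perp$ and the resulting equivalence $v\notin T_p\partial B\iff p^\top v\neq 0$, after which the result is immediate from the nonvanishing first derivative of $g$ at the origin.
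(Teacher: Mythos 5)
Your proof is correct, and it takes a more elementary route than the paper's. You use the explicit defining function $g(t)=\|\gamma(t)\|^2-r^2$ of the sphere (after identifying $T_p\partial B$ with $p^\perp$, so that $v\notin T_p\partial B$ means $p^\top v\neq 0$), compute $g(0)=0$ and $g'(0)=2p^\top v\neq 0$, and conclude that $g$ is negative on one side of the origin. The paper instead constructs a tubular-neighborhood chart $\varphi$ of $\partial B$ via the normal exponential map, takes $f(t)=\varphi^n(\gamma(t))$ as the scalar function whose sign detects interior versus exterior, and argues identically that $f(0)=0$ and $f'(0)=\dd\varphi^n_p(v)\neq 0$. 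Both proofs reduce to the same one-variable fact (a smooth function vanishing at $0$ with nonzero derivative takes negative values arbitrarily close to $0$); the difference is only in the choice of defining function. Your choice exploits the fact that $B$ is a round ball centered at the origin and avoids all the tubular-neighborhood machinery, which is a genuine simplification for this statement. The paper's chart-based argument has the advantage that it would transfer verbatim to any compact hypersurface admitting a uniform tubular neighborhood, not just a sphere, but that generality is not needed here since the lemma is only ever invoked for balls (translates of the origin-centered case, which your argument also covers after a trivial translation).
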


\begin{lemma}\label{lem:tangent_mapped_to_tangent_hull}
Let $r>0$, $\X\subset\R^m$ be a non-empty path-connected $r$-smooth compact set, 
$f:\R^m\to\R^n$ be a $C^1$ submersion, $\Y=f(\X)$, \rev{and $x\in\partial\X$ be such that $y=f(x)\in\partial\Y\cap\partial\hull(\Y)$}. 
Then, $$\dd f_x(T_x\partial\X)=T_y\partial\hull(\Y).
$$
\end{lemma}
Lemma \ref{lem:tangent_mapped_to_tangent_hull} is an important result, as it implies that the image of a geodesic in $\partial\X$ through a submersion $f$ is locally tangent to $\partial\hull(\Y)$. This allows deriving a tight error bound on $d_{T_y\partial\hull(\Y)}$.

\begin{proof}[Proof of Lemma \ref{lem:bound_delta_Tphull}]

Let $t=\|z-x\|$ and $\gamma:[0,t]\to\R^m$ be defined as
$$
\gamma(s)=x+s\frac{z-x}{\|z-x\|},
\quad s\in[0,t]
$$
such that $\gamma(0)=x$, $\gamma(t)=z$,  $\|\gamma'(s)\|=1$, and $\gamma''(s)=0$ for all $s\in[0,t]$. Then, assuming that $f$ is $C^2$ for conciseness (we discuss modifications for the case where $f$ is only $C^1$ in Section \ref{sec:lem:bound_delta_Tphull:proof_C1}), 
\begin{align*}
f(z)-f(x) &=
(f\circ\gamma)(t)-(f\circ\gamma)(0)
=
\int_0^t (f\circ\gamma)'(s)\dd s
\\
&=
\int_0^t \left((f\circ\gamma)'(0)+\int_0^s(f\circ\gamma)''(u)\dd u\right)\dd s
\\
&=
t(f\circ\gamma)'(0)
+
\int_0^t \int_0^s\left((f\circ\gamma)''(u)\right)\dd u\dd s
\\
&=
t\,\dd f_{\gamma(0)}(\gamma'(0))
+
\int_0^t \int_0^s\left(\frac{\dd}{\dd u}
\left(\dd f_{\gamma(u)}(\gamma'(u))\right)\right)\dd u\dd s
\\
&=
t\,\dd f_{\gamma(0)}(\gamma'(0))
+
\int_0^t \int_0^s\left(\dd^2 f_{\gamma(u)}(\gamma'(u),\gamma'(u)) + \dd f_{\gamma(u)}(\gamma''(u))\right)\dd u\dd s
\\
&=
t\,\dd f_{\gamma(0)}(\gamma'(0))
+
\int_0^t \int_0^s\dd^2 f_{\gamma(u)}(\gamma'(u),\gamma'(u))\dd u\dd s
\end{align*} 
by the chain rule. %
By Lemma \ref{lem:dist_TxM_normal} \rev{($\partial\hull(\Y)$ is an $(n-1)$-dimensional submanifold by Corollary \ref{cor:sub_hull:boundary_manifold})}, denoting by $\pi_y^\perp$ the linear projection operator onto $N_y\partial\hull(\Y)$,
\begin{align}
d_{T_y\partial\hull(\Y)}(f(z)-y)
&=
\left\|\pi_y^\perp(f(z)-f(x))\right\|
\nonumber
\\
&=
\left\|\pi^\perp_y\left(
t\,\dd f_{\gamma(0)}(\gamma'(0))
+
\int_0^t \int_0^s\dd^2 f_{\gamma(u)}(\gamma'(u),\gamma'(u))\dd u\dd s
\right)\right\|
\nonumber
\\
&=
\left\|t%
\pi^\perp_y\left(
    \dd f_{\gamma(0)}(\gamma'(0))
    \right)%
+
\int_0^t \int_0^s\pi^\perp_y\left(\dd^2 f_{\gamma(u)}(\gamma'(u),\gamma'(u))\right)\dd u\dd s
\right\|
\nonumber
\\
&\leq
t\left\| 
\pi^\perp_y\left(
    \dd f_{\gamma(0)}(\gamma'(0))
    \right)%
\right\|
+
\left\| 
\int_0^t \int_0^s\pi^\perp_y\left(\dd^2 f_{\gamma(u)}(\gamma'(u),\gamma'(u))\right)\dd u\dd s
\right\|,
\label{eq:lem:bound_delta_Tphull:1}
\end{align}
where the third equality follows from the linearity of $\pi^\perp_y$. We bound the two terms in \eqref{eq:lem:bound_delta_Tphull:1} below:
\begin{itemize}[leftmargin=5mm]
    \item To bound the first term, we first decompose $\gamma'(0)$ into its tangential component $\gamma'(0)^\parallel\in T_x\partial\X$ and its normal component $\gamma'(0)^\perp\in N_x\partial\X$, so that $\gamma'(0)=\gamma'(0)^\parallel+\gamma'(0)^\perp$. Then, by linearity,
    \begin{align*}
    \pi^\perp_y\left(
    \dd f_{\gamma(0)}(\gamma'(0))
    \right)
    &=
    \pi^\perp_y\left(
    \dd f_{\gamma(0)}\left(\gamma'(0)^\parallel+\gamma'(0)^\perp\right)
    \right)
    \\
    &=
    \pi^\perp_y\left(
    \dd f_{\gamma(0)}\left(\gamma'(0)^\parallel\right)
    \right)
    +
    \pi^\perp_y\left(
    \dd f_{\gamma(0)}\left(\gamma'(0)^\perp\right)
    \right)
    \\
    &=
    \pi^\perp_y\left(
    \dd f_{\gamma(0)}\left(\gamma'(0)^\perp\right)
    \right),
    \end{align*}
    where we used the fact that $\pi^\perp_y\left(
    \dd f_{\gamma(0)}\left(\gamma'(0)^\parallel\right)
    \right)=0$ since $\dd f_{\gamma(0)}\left(\gamma'(0)^\parallel\right)\in  T_y\partial\hull(\Y)$ thanks to Lemma \ref{lem:tangent_mapped_to_tangent_hull}. Thus, since $f$ is $\bar{L}$-Lipschitz and $\gamma'(0)=(z-x)/\|z-x\|$,
    \begin{align}
    \left\|
    \pi^\perp_y\left(
    \dd f_{\gamma(0)}(\gamma'(0))
    \right)
    \right\|
    &=
    \left\|\pi^\perp_y\left(
    \dd f_{\gamma(0)}\left(\gamma'(0)^\perp\right)
    \right)
    \right\|
    \leq
    \left\|
    \dd f_{\gamma(0)}\left(\gamma'(0)^\perp\right)
    \right\|
\nonumber
    \\
    &\leq
    \bar{L}\left\|\gamma'(0)^\perp\right\|
\nonumber
    \\
    &\leq
    \frac{\bar{L}}{\|z-x\|}\left\|(z-x)^\perp\right\|
\nonumber
    \\
    &\leq
    \frac{\bar{L}}{\|z-x\|}\frac{\|z-x\|^2}{2r}
    =
    \frac{\bar{L}t}{2r},
\label{eq:lem:bound_delta_Tphull:2}
    \end{align}
    where the last inequality follows from \rev{Lemma \ref{lem:roll_implies_reach} and} Theorem \ref{thm:reach} since \rev{$\X$} is $r$-smooth. 
\item To bound the second term, since $\dd f$ is $\bar{H}$-Lipschitz and $\|\gamma'(u)\|=1$ for all $u\in[0,t]$,
\begin{align}
\left\| 
\int_0^t \int_0^s\pi^\perp_y\left(\dd^2 f_{\gamma(u)}(\gamma'(u),\gamma'(u))\right)\dd u\dd s
\right\|
&\leq
\int_0^t \int_0^s
\left\|\pi^\perp_y\left(\dd^2 f_{\gamma(u)}(\gamma'(u),\gamma'(u))
\right)
\right\|\dd u\dd s
\nonumber
\\
&\leq
\int_0^t \int_0^s
\left\|\dd^2 f_{\gamma(u)}(\gamma'(u),\gamma'(u)) 
\right\|\dd u\dd s
\nonumber
\\
&\leq
	\int_0^t 
	\int_0^s
\left(\bar{H} \|\gamma'(u)\|^2\right)
	\dd u\dd s
\nonumber
\\
&=
\bar{H}\frac{t^2}{2}.
\label{eq:lem:bound_delta_Tphull:3}
\end{align}
\end{itemize}
Combining \eqref{eq:lem:bound_delta_Tphull:1}-\eqref{eq:lem:bound_delta_Tphull:3}, we obtain 
$
d_{T_y\partial\hull(\Y)}(f(z)-y)
\leq
\frac{1}{2}\big(\frac{\bar{L}}{r} + \bar{H}\big)t^2
$, where \rev{$t=\|z-x\|$}. 
\end{proof}

\subsubsection{Hausdorff distance error bound (proof of Theorem \ref{thm:error_bound:smooth_boundary})}
We combine the results from the last two sections and obtain Theorem \ref{thm:error_bound:smooth_boundary}. %
\begin{proof}[Proof of Theorem \ref{thm:error_bound:smooth_boundary}] 
Let $y\in\partial\hull(\Y)\cap\partial\Y$. Then, there exists $x\in\partial\X$ such that $y=f(x)$ ($\partial\Y\subseteq f(\partial\X)$ by Lemmas \ref{lem:submersion_open_map} and \ref{lem:open_map_boundary} since $f$ is a submersion). Then, there exists $z\in Z_\delta\subset\partial\X$ such that $\|x-z\|\leq\delta$. 
Thus, by Lemma \ref{lem:bound_delta_Tphull} ($\partial\hull(\Y)$ is an $(n-1)$-dimensional submanifold by Corollary \ref{cor:sub_hull:boundary_manifold}), we have 
$
d_{T_y\partial\hull(\Y)}(f(z)-y)
\leq 
\frac{1}{2}\big(\frac{\bar{L}}{r} + \bar{H}\big)\delta^2$.  
 We apply Lemma \ref{lem:tangent_implies_hausdorff} and conclude. 
\end{proof}

\subsection{Comments on Theorem \ref{thm:error_bound:smooth_boundary}}\label{sec:thm:error_bound:smooth_boundary:discussion}
First, the error bound from Theorem \ref{thm:error_bound:smooth_boundary} is quadratic in $\delta$. It is thus tighter than the naive Lipschitz-covering bound from Lemma \ref{lem:error_bound:covering_lipschitz} (that is linear in $\delta$)  for small values of $\delta$ (i.e., for a sufficiently-dense cover $Z_\delta$ so that $\delta\leq(2\bar{L})/(\bar{L}/r+\bar{H})$).

Second, the bound from Theorem \ref{thm:error_bound:smooth_boundary} is $2\times$ tighter than the bound \revFirst{in} \cite[Theorem 1]{Dumbgen1996}\revFirst{. Theorem \ref{thm:error_bound:smooth_boundary} also does not rely on convexity assumptions for $\X$ or for $\Y=f(\X)$, compared to \cite[Theorem 1]{Dumbgen1996} that only applies to convex problems}, see Theorem \ref{thm:dumbgen} and Corollary \ref{cor:dumbgen}. 
    \revFirst{These differences} follow from \revFirst{using} the bound in Lemma \ref{lem:tangent_implies_hausdorff} on the Hausdorff distance as a function of the distances to the tangent spaces of $\partial\hull(\Y)$. %

    The bound \revFirst{in Theorem \ref{thm:error_bound:smooth_boundary}} does not depend on the smoothness of the inverse of $f$ (see also Lemma \ref{lem:bound_delta_Tphull:diffeo}), which could potentially be defined on a $n$-dimensional submanifold of $\R^m$ given by the rank theorem, see Theorem \ref{thm:submersion_rsmooth}. Such smoothness property would depend on local charts given by the rank theorem that would be difficult to characterize. The fact that the obtained bound only depends on the smoothness of $(f,\dd f)$ is desirable. We note that Theorem \ref{thm:submersion_rsmooth} is still needed to apply arguments from differential geometry, e.g., when bounding the distances to the tangent spaces $T_y\partial\hull(\Y)$.

\subsection{Naive \rev{error} bound for the diffeomorphism case}\label{sec:error_bounds:diffeo}
Before deriving the bound in Theorem \ref{thm:error_bound:smooth_boundary}, we studied the problem under the additional assumption that $f$ is a diffeomorphism. 
This assumption is restrictive, as it implies that $f$ maps between two spaces of the same dimension and thus does not apply to problems where the dimensionality of the input set $\X$ is larger than the dimensionality of the output set $\Y$. 
Nevertheless, this setting simplifies the analysis, as it prevents the presence of self-intersections in the image $\Y$ (see Example \ref{ex:intersection}) and allows directly obtaining a bound on the reach of $\Y$ (Lemma \ref{lemma:reach:diffeo}). This curvature bound for the boundary of $\Y$ yields a bound on the convex hull approximation error.

\begin{lemma}[Naive error bound if $f$ is a diffeomorphism]\label{lem:bound_delta_Tphull:diffeo}
Let $r,\delta>0$, $\X\subset\R^n$ be a non-empty path-connected $r$-smooth compact set, 
$f:\R^n\to\R^n$ be a $C^1$ diffeomorphism such that  $(f,f^{-1},\dd f)$ are $(\bar{L},\underline{L},\bar{H})$-Lipschitz, 
$\Y=f(\X)$, 
$x,z\in\partial\X$, and $y=f(x)\in\partial\Y$. %
Then, 
\begin{equation}\label{eq:R:diffeo}
d_{T_y\partial\Y}(f(z)-y)\leq \frac{\|z-x\|^2}{2R_{\textrm{diffeo}}},
\quad
\text{where}
\quad
\frac{1}{R_{\textrm{diffeo}}}=%
\left(\frac{\bar{L}}{r}+\bar{H}\right)(\underline{L}\bar{L})^2,
\end{equation}
In particular, if $y\in\partial\Y\cap\partial\hull(\Y)$, then $d_{T_y\partial\hull(\Y)}(f(z)-y)\leq \delta^2/(2R_{\textrm{diffeo}})$. Thus, if $Z_\delta\subset\partial\X$ is a $\delta$-cover of $\partial\X$, then $\dH(\hull(\Y),\hull(f(Z_\delta)))\leq \delta^2/(2R_{\textrm{diffeo}})$.
\end{lemma}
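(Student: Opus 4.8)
The plan is to obtain the tangent-space distance bound on $\partial\Y$ directly from the reach of $\Y$ (which the diffeomorphism assumption makes available through Corollary \ref{cor:rsmooth:diffeo}), and then to pay for the distortion of $f$ through its Lipschitz constant $\bar L$.

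First I would invoke Corollary \ref{cor:rsmooth:diffeo}: since $\X$ is $r$-smooth and $f$ is a $C^1$ diffeomorphism with $(f,f^{-1},\dd f)$ that are $(\bar L,\underline L,\bar H)$-Lipschitz, both $\Y$ and $\hull(\Y)$ are $R$-smooth with $1/R=(\bar L/r+\bar H)\underline L^2$, and $\partial\Y$ and $\partial\hull(\Y)$ are $(n-1)$-dimensional submanifolds. By Lemma \ref{lem:roll_implies_reach}, $R$-smoothness gives $\reach(\partial\Y)\geq R$. Because $f$ is a homeomorphism of $\R^n$, we have $f(\partial\X)=\partial\Y$, so both $y=f(x)$ and $f(z)$ lie on $\partial\Y$; Theorem \ref{thm:reach} applied to these two points then yields $d_{T_y\partial\Y}(f(z)-y)\leq\|f(z)-f(x)\|^2/(2R)$. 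Bounding $\|f(z)-f(x)\|\leq\bar L\|z-x\|$ by the Lipschitz constant of $f$ gives the first inequality, with $1/R_{\textrm{diffeo}}=\bar L^2/R=(\bar L/r+\bar H)(\underline L\bar L)^2$.

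The step I expect to be the main subtlety is passing from $T_y\partial\Y$ to $T_y\partial\hull(\Y)$ when $y\in\partial\Y\cap\partial\hull(\Y)$; I would show that these two hyperplanes coincide. Let $n_h$ be the outward unit normal of the convex set $\hull(\Y)$ at $y$, so that $H=\{w:(w-y)^\top n_h=0\}$ supports $\hull(\Y)$, hence also supports $\Y\subseteq\hull(\Y)$ at $y$. Since $\Y$ is $R$-smooth, there is an inscribed ball $B(\bar y,R)\subseteq\Y$ internally tangent to $\partial\Y$ at $y$, whose tangent hyperplane at $y$ is exactly $y+T_y\partial\Y$. As $B(\bar y,R)$ lies in the closed half-space bounded by $H$ and meets $H$ only at the boundary point $y$, the hyperplane $H$ must be tangent to that ball at $y$, i.e. $H=y+T_y\partial\Y$. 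Therefore $T_y\partial\hull(\Y)=T_y\partial\Y$, so the bound of the first part transfers verbatim, and specializing to $\|z-x\|\leq\delta$ gives $d_{T_y\partial\hull(\Y)}(f(z)-y)\leq\delta^2/(2R_{\textrm{diffeo}})$.

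Finally, for the Hausdorff bound I would apply Lemma \ref{lem:tangent_implies_hausdorff} with $A=f(Z_\delta)$: indeed $A\subseteq f(\X)=\Y\subseteq\hull(\Y)$ and $\partial\hull(\Y)$ is an $(n-1)$-submanifold, so its hypotheses hold. For each $y\in\partial\hull(\Y)\cap\partial\Y$, writing $y=f(x)$ with $x\in\partial\X$ and picking $z\in Z_\delta$ with $\|z-x\|\leq\delta$, the inner infimum $\Inf_{a\in A}d_{T_y\partial\hull(\Y)}(y-a)$ is at most $d_{T_y\partial\hull(\Y)}(y-f(z))=d_{T_y\partial\hull(\Y)}(f(z)-y)\leq\delta^2/(2R_{\textrm{diffeo}})$, using that $T_y\partial\hull(\Y)$ is a linear subspace so the distance is even in its argument. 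Taking the supremum over $y$ yields $\dH(\hull(\Y),\hull(f(Z_\delta)))\leq\delta^2/(2R_{\textrm{diffeo}})$.
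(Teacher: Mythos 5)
Your proof is correct and follows essentially the same route as the paper's: bound $\reach(\partial\Y)$ via the diffeomorphism (you go through Corollary \ref{cor:rsmooth:diffeo} and Lemma \ref{lem:roll_implies_reach} rather than applying Lemma \ref{lemma:reach:diffeo} directly to $\partial\X$, but the constant is the same), apply Theorem \ref{thm:reach} on $\partial\Y$ together with $\|f(z)-f(x)\|\leq\bar L\|z-x\|$, identify $T_y\partial\Y=T_y\partial\hull(\Y)$ via the tangent inscribed ball, and conclude with Lemma \ref{lem:tangent_implies_hausdorff}. Your supporting-hyperplane argument for the tangent-space identification just spells out what the paper leaves as a parenthetical remark.
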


\begin{proof}
$\X$ is $r$-smooth, so $\reach(\partial\X)\geq r$ by Lemma \ref{lem:roll_implies_reach_boundary}.  
Thus, by Lemma \ref{lemma:reach:diffeo}, $\reach(\partial\Y)\geq\tilde{R}$ with $\tilde{R}^{-1}=\left(\frac{\bar{L}}{r}+\bar{H}\right)\underline{L}^2$ (note that $\partial\Y=f(\partial\X)$ since $f$ is a diffeomorphism). In addition, $\partial\X$ is a submanifold thanks to Theorem \ref{thm:walther1999}, so $\partial\Y$ is also a submanifold since $f$ is a diffeomorphism. 

Since $x,z\in\partial\X$ and $f$ is a diffeomorphism, $y,f(z)\in\partial\Y$. Also, $\|f(z)-y\|\leq\bar{L}\|z-x\|$. 
Thus, by Theorem \ref{thm:reach} applied to $\partial\Y$, we have $d_{T_y\partial\Y}(f(z)-y)\leq(\bar{L}\|z-x\|)^2/(2\tilde{R})=\|z-x\|^2/(2R_{\textrm{diffeo}})$. 

At $y\in\partial\Y\cap\partial\hull(\Y)$, we have $T_y\partial\Y=T_y\partial\hull(\Y)$ (since there exists a ball $B$ inside $\Y\subseteq\hull(\Y)$ that is tangent at $y$ to both $\partial\hull(\Y)$ and $\partial\Y$). 

The error bound on the Hausdorff distance follows from  \eqref{eq:R:diffeo} and Lemma \ref{lem:tangent_implies_hausdorff}.
\end{proof}
The error bound from Lemma \ref{lem:bound_delta_Tphull:diffeo} is more conservative than the error bound from Theorem \ref{thm:error_bound:smooth_boundary} by a factor $(\underline{L}\bar{L})^2$. This additional conservatism comes from the use of Lemma \ref{lemma:reach:diffeo} to bound the curvature of $\Y$. By directly working with the convex hull $\hull(\Y)$, the error bound in Theorem \ref{thm:error_bound:smooth_boundary} is tighter and also applies to submersions, although it requires more involved analysis.

\section{Applications}\label{sec:applications}
We apply our results to the problems of 
(1) geometric inference, wherein inputs $x_i\in\partial\X$ are randomly sampled from a distribution $\Prob_\X$ and one seeks a reconstruction of the convex hull of the output set, 
(2) reachability analysis of dynamical systems, (3) robust programming, wherein constraints should be satisfied for a given range parameters,  
and (4) robust optimal control of uncertain dynamical systems. 
For conciseness, proofs and details are deferred to Section \ref{sec:applications:appendix}. 
Code to reproduce experiments is available at \url{https://github.com/StanfordASL/convex-hull-estimation}.

\subsection{Geometric inference}\label{sec:applications:random_samples}
The error bound in Theorem \ref{thm:error_bound:smooth_boundary} implies the fast convergence of convex hull estimators of $\hull(\Y)$ from a random sample of inputs $x_i\in\partial\X$. 
We define the following.
\begin{itemize}[leftmargin=4mm]\setlength\itemsep{0.5mm}
\item Let $\Prob_\X$ be a probability measure on $(\R^m,\B(\R^m))$ with $\Prob_\X(\partial\X)=1$ (i.e., $\Prob_\X$ has support $\partial\X$). 
\item Let $\{x_i\}_{i=1}^M$ be $M\in\bN$ independent and identically-distributed (iid) inputs sampled from $\Prob_\X$.
\item Let $y_i=f(x_i)$ for $i=1,\dots,M$ and $\hat{\Y}^M=\hull\left(\{y_i\}_{i{=}1}^M\right)$.
\end{itemize}
$\hat\Y^M$ is a random compact set \cite{Molchanov_BookTheoryOfRandomSets2017}; we refer to Section \ref{sec:random_compact_set_estimator} for details. 
Intuitively, different sampled inputs $x_i(\omega)$  induce different sampled outputs $y_i(\omega)$, resulting in  different approximated compact sets $\hat\Y^M(\omega)\in\K$, where $\omega\in\Omega$ is drawn from a probability space $(\Omega,\G,\Prob)$.  

To derive high-probability bounds for the approximation error $\dH(\hull(\Y),\hat{\Y}^M)$, 
we need an assumption on the sampling distribution. %
\begin{assumption}\label{assum:sampling_density:boundary}
Given $\delta>0$, there is a $\Lambda_\delta^{\partial\X}>0$ such that $\Prob_\X\left(B\left(x,\frac{\delta}{2}\right)\right)\geq\Lambda_\delta^{\partial\X}$ for all $x\in\partial\X$.
\end{assumption}
\noindent Assumption \ref{assum:sampling_density:boundary} gives a lower bound on sampling inputs that are $\delta/2$-close to any $x\in\partial\X$. In particular, it is satisfied if $\X$ is $r$-smooth and if the sampled inputs $x^i$ are drawn from a uniform distribution over $\partial\X$ \cite[Lemma III.23]{AamariPhD2017} or over $\X$ \cite[Lemma 6]{LewJansonEtAl2022}. Assumption %
\ref{assum:sampling_density:boundary}, combined with a standard covering argument and a union bound, allows bounding the probability that the sample $X^M=\{x_i\}_{i=1}^M$ covers %
$\partial\X$. 
\begin{lemma}\label{lem:covering_probability}
Let $\X\subset\R^m$ be a non-empty compact set, 
$\delta>0$, 
$N(\partial\X,\delta/2)$ denote the internal $(\delta/2)$-covering number of $\partial\X$\footnote{$N(\partial\X,\delta/2)$ denotes the minimum number of points $z_j\in\partial\X$ such that $\partial\X\subseteq\cup_{j}B(z_j,\delta/2)$.},  
$\Prob_\X$ be a probability measure over $\partial\X$ satisfying Assumption \ref{assum:sampling_density:boundary} with $\Lambda_\delta^{\partial\X}$, and
\begin{equation}\label{eq:delta:sampling_density:boundary}
\beta_{M,\delta}^{\partial\X}= 
N(\partial\X,\delta/2)(1 -
	\Lambda_\delta^{\partial\X}
)^M.
\end{equation}
Let $X^M=\{x_i\}_{i=1}^M$ be a sample of $M$ inputs $x_i$ drawn iid from $\Prob_\X$. 
Then, $\partial\X\subseteq X^M+B(0,\delta)$ with probability at least $1-\beta_{M,\delta}^{\partial\X}$.
\end{lemma}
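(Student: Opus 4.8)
The plan is to combine a deterministic covering argument with a union bound over a minimal cover of $\partial\X$. First I would fix a minimal internal $(\delta/2)$-cover of the boundary: set $N=N(\partial\X,\delta/2)$ and choose centers $z_1,\dots,z_N\in\partial\X$ with $\partial\X\subseteq\bigcup_{j=1}^N B(z_j,\delta/2)$. The deterministic heart of the argument is the observation that if every ball $B(z_j,\delta/2)$ contains at least one sample point of $X^M$, then $X^M$ is automatically a $\delta$-cover of $\partial\X$. Indeed, given any $x\in\partial\X$, there is some $j$ with $x\in B(z_j,\delta/2)$; if $x_i\in B(z_j,\delta/2)$ for some $i$, the triangle inequality gives $\|x-x_i\|\leq\|x-z_j\|+\|z_j-x_i\|\leq\delta/2+\delta/2=\delta$, so $x\in X^M+B(0,\delta)$. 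Hence the failure event $\{\partial\X\not\subseteq X^M+B(0,\delta)\}$ is contained in the event that at least one covering ball is ``missed'' by all $M$ samples.

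Next I would bound the probability that a single covering ball is missed. Since each center $z_j$ lies in $\partial\X$, Assumption \ref{assum:sampling_density:boundary} applies directly to it and yields $\Prob_\X(B(z_j,\delta/2))\geq\Lambda_\delta^{\partial\X}$. As the inputs $x_1,\dots,x_M$ are drawn iid from $\Prob_\X$, the probability that none of them lands in $B(z_j,\delta/2)$ is $(1-\Prob_\X(B(z_j,\delta/2)))^M\leq(1-\Lambda_\delta^{\partial\X})^M$. Applying a union bound over the $N$ balls then gives $\Prob(\text{some ball is missed})\leq\sum_{j=1}^N(1-\Lambda_\delta^{\partial\X})^M=N(\partial\X,\delta/2)(1-\Lambda_\delta^{\partial\X})^M=\beta_{M,\delta}^{\partial\X}$, and taking complements yields $\Prob(\partial\X\subseteq X^M+B(0,\delta))\geq 1-\beta_{M,\delta}^{\partial\X}$, as claimed.

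The argument is elementary and I do not anticipate a genuine obstacle. The only points requiring a little care are ensuring the cover centers are chosen in $\partial\X$ (so that Assumption \ref{assum:sampling_density:boundary} is applicable at exactly those points) and keeping the radius bookkeeping consistent: covering $\partial\X$ by balls of radius $\delta/2$ and hitting each such ball produces a $\delta$-cover through a single triangle inequality. This factor-of-two interplay between the covering resolution $\delta/2$ and the target resolution $\delta$ is the main subtlety, and it is precisely what makes the stated $N(\partial\X,\delta/2)$ the relevant covering number rather than $N(\partial\X,\delta)$.
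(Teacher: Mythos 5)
Your proof is correct and follows essentially the same route as the paper: a minimal internal $(\delta/2)$-cover of $\partial\X$, the triangle-inequality observation that hitting every covering ball yields a $\delta$-cover, the lower bound $\Prob_\X(B(z_j,\delta/2))\geq\Lambda_\delta^{\partial\X}$ from Assumption \ref{assum:sampling_density:boundary} applied at the cover centers, and a union bound over the $N(\partial\X,\delta/2)$ balls. The paper phrases the failure event as some cover center lying outside $X^M+B(0,\delta/2)$ rather than a ball being missed, but this is the same event and the same argument.
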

Theorem \ref{thm:error_bound:smooth_boundary} (and Lemma \ref{lem:error_bound:covering_lipschitz}), combined with Lemma \ref{lem:covering_probability}, yields high-probability error bounds for the reconstruction of $\hull(\Y)$ using the convex hull of the images $f(x_i)$ of the sampled inputs $x_i$.
\begin{corollary}[Finite-sample error bounds] \label{cor:conservative_finite_sample} 
Let $\X\subset\R^m$ be a non-empty compact set, 
$f:\R^m\to\R^n$, 
$\Y=f(\X)$, 
$\delta>0$, 
$\Prob_\X$ be a probability measure over $\partial\X$ satisfying Assumption \ref{assum:sampling_density:boundary} with $\Lambda_\delta^{\partial\X}$, 
\rev{$M\in\bN$,} $\beta_{M,\delta}^{\partial\X}=\eqref{eq:delta:sampling_density:boundary}$, 
$\{x_i\}_{i=1}^M$ be $M$ inputs sampled iid from $\Prob_\X$, 
$y_i=f(x_i)$ for $i=1,\dots,M$, and  
 $\hat\Y^M=\hull\left(\{y_i\}_{i{=}1}^M\right)$. 
 Then, the following hold with probability at least $1-\beta_{M,\delta}^{\partial\X}$:
\begin{itemize}[leftmargin=4mm]\setlength\itemsep{0.5mm}
\item \textit{First-order error bound:} If $f$ is $\bar{L}$-Lipschitz and  
$\partial\Y\subseteq f(\partial\X)$ (e.g., if $f$ is a submersion by Lemmas \ref{lem:submersion_open_map}-\ref{lem:open_map_boundary}), then $\dH(\hull(\Y),\hat\Y^M)\leq \bar{L}\delta$.
\item \textit{Second-order error bound:} Let $r>0$. If $\X$ is path-connected and $r$-smooth, $f$ is a $C^1$ submersion, and $(f,\dd f)$ are $(\bar{L},\bar{H})$-Lipschitz,
then 
\begin{equation}\label{eq:cor:conservative_finite_sample:error}
\dH(\hull(\Y),\hat\Y^M)\leq \frac{1}{2}\left(\frac{\bar{L}}{r}+\bar{H}\right)\delta^2. 
\end{equation}
\end{itemize}
\end{corollary}

\rev{Corollary \ref{cor:conservative_finite_sample} gives an error bound with associated confidence probability $1-\beta^{\partial\X}_{M,\delta}$ as a function of $\delta^2$ and of the sample size $M$. The error bound in \eqref{eq:cor:conservative_finite_sample:error} gets tighter (with decreasing probability) as $\delta$ decreases, and increasing the sample size  $M$ increases the probability that \eqref{eq:cor:conservative_finite_sample:error}  holds.}

\revFirst{
The next asymptotic convergence result (Corollary \ref{cor:conservative_asymptotic}) follows from Corollary \ref{cor:conservative_finite_sample}. Corollary \ref{cor:conservative_asymptotic} resembles \cite[Corollary 2]{Dumbgen1996}, generalizing it to the non-convex problem setting.  Given strictly positive scalar functions $g,h$, we say that $g(M)=O(h(M))$ if $\lim\sup_{M\to\infty}g(M)/h(M)<\infty$.}
\begin{corollary}[\revFirst{Asymptotic convergence rates}] \label{cor:conservative_asymptotic} 
\revFirst{Let $\X\subset\R^m$ be a non-empty path-connected compact set that is $r$-smooth, 
$f:\R^m\to\R^n$ be a $C^1$ submersion where $(f,\dd f)$ are $(\bar{L},\bar{H})$-Lipschitz, 
$\Y=f(\X)$, 
\rev{$\Prob_\X$ be an absolutely continuous\footnote{\rev{with respect to the Lebesgue measure over $\partial\X$. That is, the inputs $x_i$ are drawn roughly uniformly over $\partial\X$.}} probability measure over $\partial\X$ with a density $p$ 
bounded below by a strictly positive constant (i.e., $p(x)\geq p_{\textrm{min}}>0$ for all $x\in\partial\X$),} 
$\{x_i\}_{i=1}^M$ be $M$ inputs sampled iid from $\Prob_\X$, 
$y_i=f(x_i)$ for $i=1,\dots,M$, and  
 $\hat\Y^M=\hull\left(\{y_i\}_{i{=}1}^M\right)$. 
 Then, almost surely,}
 $$
 \revFirst{\dH(\hull(\Y),\hat\Y^M) =  O\big((\log(M)/M)^{2/(m-1)}\big).}
 $$
\end{corollary}

\begin{figure}[t]
\includegraphics[height=3.85cm]{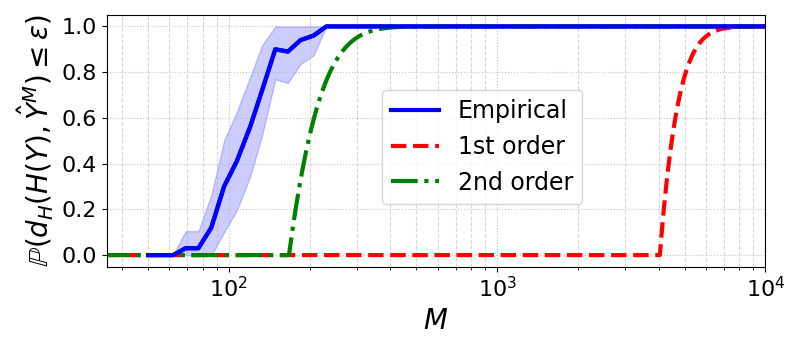}
\includegraphics[height=3.85cm]{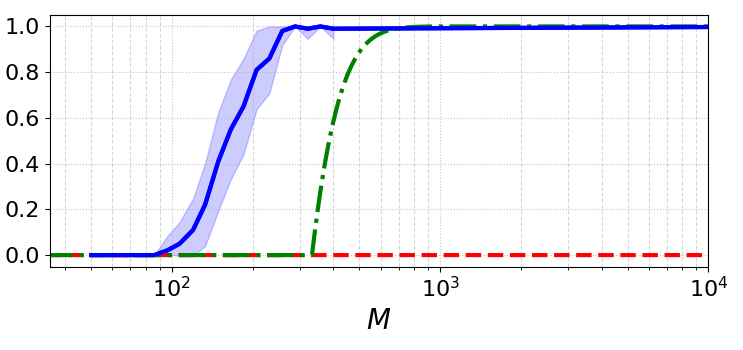}
\caption{Bounds from Corollary \ref{cor:conservative_finite_sample} for $L=1$ (left) and $L=3$ (right) for different sample sizes.}
\label{fig:sensitivity}
\end{figure}
 
We numerically evaluate the tightness of the bounds from Corollary \ref{cor:conservative_finite_sample}. As in \cite{LewJansonEtAl2022}, let $\X=B(0,1)\subset\R^2$ and $f:x\mapsto(Lx^1,x^2)$ for $L>0$, so that $\Y=f(\X)$ %
is an ellipsoid. 
We sample $M$ inputs $x_i$ from a uniform distribution $\Prob_\X$ over $\partial\X$, which satisfies Assumption \ref{assum:sampling_density:boundary}. 
$\X$ is $1$-smooth, $f$ is a diffeomorphism (and thus a submersion), and $(f,\dd f)$ are $(\max(1,L),0)$-Lipschitz, so the assumptions of Corollary \ref{cor:conservative_finite_sample} hold. 
For a desired accuracy $\epsilon=10^{-2}$, using the first- and second-order bounds from Corollary \ref{cor:conservative_finite_sample}, we determine $\beta_{M,\delta}^{\partial\X}=\eqref{eq:delta:sampling_density:boundary}$ to achieve a Hausdorff distance error $\dH(\hull(\Y),\hat\Y^M)\leq\epsilon$ with probability at least $1-\beta_{M,\delta}^{\partial\X}$ for different sample sizes $M$.

We compare the bounds $1-\beta_{M,\delta}^{\partial\X}$ given by Corollary \ref{cor:conservative_finite_sample} with the empirical average number of trials that achieve $\epsilon$-accuracy (we use $100$ independent trials for each value of $M$). 
Results for different sample sizes $M$ are shown in Figure \ref{fig:sensitivity}. We observe that the second-order error bounds are quite sharp in the case $L=1$ and are an order of magnitude tighter than the first-order error bounds. Bounds become more conservative for larger values of $L$.

\subsection{Reachability analysis of uncertain dynamical systems}\label{sec:applications:reachability}
Next, we consider the problem of estimating the convex hull of all reachable states
of a dynamical system at a given time in the future. Such reachable sets play an important role in many applications ranging from robust predictive control \cite{Schurmann2018,Sieber2022} to 
neural network verification \cite{Everett21_journal}. 
Empirically, sampling-based approaches can provide accurate reconstructions of convex hulls of reachable sets from relatively few inputs \cite{LewPavone2020,LewJansonEtAl2022}. However, previous error bounds \cite{LewJansonEtAl2022} rely on naive Lipschitz-covering arguments and do not match empirical results, see Figure \ref{fig:sensitivity}. 

Let $\X_0\subset\R^n$, $\Theta\subset\R^p$, and $\U\subset\R^m$ be non-empty compact sets of initial conditions, parameters, and admissible control inputs. 
Let $f:\R^n\times\R^p\times\R^m\times\R\to\R^n$ be a continuous map that is Lipschitz in its first argument, i.e., for some $L\geq 0$, 
$\|f(x_1,\theta,u,t)-f(x_2,\theta,u,t)\|\leq L\|x_1-x_2\|$ for all $x_1,x_2\in\R^n, \theta\in\Theta, u\in\U, t\in\R$. 
Given $T>0$, the ordinary differential equation (ODE)
\begin{equation}
\label{eq:ODE}
\dot{x}(t)=f(x(t),\theta,u(t),t),\quad t\in[0,T],
\quad
x(0)=x^0
\end{equation}
has a unique solution $x_u^{x^0,\theta}\in C([0,T],\R^n)$ for any $u\in L^2([0,T],\U)$. For any $t\in[0,T]$, the map $x^0\mapsto x_u^{x^0,\theta}(t)$ is a diffeomorphism, so $(x^0,\theta)\mapsto x_u^{x^0,\theta}(t)$ is a submersion. Define the reachable set 
\begin{align*}
\Y_u(t)&=\left\{x_u^{x^0,\theta}(t)=x^0+\int_0^tf\big(x_u^{x^0,\theta}(s),\theta,u(s),s\big)\dd s: (x^0,\theta)\in\X\right\}.
\end{align*}
where $\X\subset\R^{n+p}$ is any approximation with smooth boundary of $\X_0\times\Theta$.  
Theorem \ref{thm:error_bound:smooth_boundary} implies that $\hull(\Y_u(t))$ can be accurately estimated using inputs in $\partial\X$.
\begin{corollary} \label{cor:reachability_analysis}
Let $r,\delta>0$, $Z_\delta=\{(x_i^0,\theta_i)\}_{i=1}^M\subset\partial\X$ be a $\delta$-cover of $\partial\X$, and assume that $\X$ is non-empty, compact, path-connected, and $r$-smooth. Let $u\in L^2([0,T],\U)$, $t\in\R$, $\hat{\Y}_u^M(t)=\hull\big(\{x_u^{x_i^0,\theta_i}(t)\}_{i=1}^M\big)$, and $(\bar{L}_u,\bar{H}_u)$ the Lipschitz constants of $(x^0,\theta)\mapsto (x_u^{x^0,\theta}(t),\dd x_u^{x^0,\theta}(t))$. Then, 
$\dH\big(\hull(\Y_u(t)),\hat{\Y}_u^M(t)\big)\leq \frac{1}{2}\big(\frac{\bar{L}_u}{r}+\bar{H}_u\big)\delta^2$. 
\end{corollary}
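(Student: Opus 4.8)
The plan is to read this off as a direct specialization of Theorem~\ref{thm:error_bound:smooth_boundary} to the parametrized flow map. I would first name the map $\Phi_u^t:\R^{n+p}\to\R^n$, $\Phi_u^t(x^0,\theta)=x_u^{x^0,\theta}(t)$, and record the three identifications that make the theorem applicable: by the definition of the reachable set, $\Y_u(t)=\Phi_u^t(\X)$; by construction $\Phi_u^t(x_i^0,\theta_i)=x_u^{x_i^0,\theta_i}(t)$, so that $\hat\Y_u^M(t)=\hull(\Phi_u^t(Z_\delta))$; and $Z_\delta$ plays the role of the $\delta$-cover of $\partial\X$. With these matches in place, the corollary reduces entirely to verifying the hypotheses of Theorem~\ref{thm:error_bound:smooth_boundary} for the pair $(\X,\Phi_u^t)$.

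Most hypotheses hold by assumption: $\X$ is non-empty, path-connected, compact and $r$-smooth; $Z_\delta$ is a $\delta$-cover of $\partial\X$; and the Lipschitz constants $(\bar L_u,\bar H_u)$ of $(\Phi_u^t,\dd\Phi_u^t)$ are assumed to exist. The one substantive point I would verify is that $\Phi_u^t$ is a $C^1$ submersion. The $C^1$ regularity I would obtain from the classical theorem on differentiable dependence of solutions of \eqref{eq:ODE} on their initial condition and parameters (the appearance of $\dd x_u^{x^0,\theta}(t)$ in the statement presupposes exactly this regularity). For the submersion property I would use the observation already recorded in the setup: for each fixed $\theta$, the partial flow $x^0\mapsto x_u^{x^0,\theta}(t)$ is a diffeomorphism of $\R^n$, so its differential is a linear isomorphism of $\R^n$. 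Since this differential is precisely the restriction of $\dd(\Phi_u^t)_{(x^0,\theta)}$ to the $x^0$-coordinate subspace of $\R^{n+p}$, the full differential $\dd(\Phi_u^t)_{(x^0,\theta)}:\R^{n+p}\to\R^n$ is already surjective, so $\Phi_u^t$ is a submersion at every point.

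Having verified the hypotheses, Theorem~\ref{thm:error_bound:smooth_boundary} applied to $(\X,\Phi_u^t)$ delivers $\dH(\hull(\Y_u(t)),\hull(\Phi_u^t(Z_\delta)))\leq\frac12(\frac{\bar L_u}{r}+\bar H_u)\delta^2$, and substituting $\hull(\Phi_u^t(Z_\delta))=\hat\Y_u^M(t)$ finishes the argument. I expect the only real content to sit in the submersion step, and in particular in the point that the input dimension $m=n+p$ exceeds the output dimension $n$ whenever there is at least one uncertain parameter: this is exactly why the submersion form of Theorem~\ref{thm:error_bound:smooth_boundary} is needed here, and why the diffeomorphism-based bound of Lemma~\ref{lem:bound_delta_Tphull:diffeo} does not apply. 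The remaining matching of $\Y_u(t)$, $\hat\Y_u^M(t)$, and $Z_\delta$ to the objects of the theorem is routine bookkeeping.
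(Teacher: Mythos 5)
Your proposal is correct and follows exactly the paper's route: the paper's proof simply notes that $(x^0,\theta)\mapsto x_u^{x^0,\theta}(t)$ is a $C^1$ submersion (justified in the setup by the same observation you make, that $x^0\mapsto x_u^{x^0,\theta}(t)$ is a diffeomorphism for each fixed $\theta$, so the full differential is surjective) and then invokes Theorem~\ref{thm:error_bound:smooth_boundary}. Your write-up just makes the bookkeeping and the hypothesis-checking explicit.
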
 
The smoothness constants of the map $(x^0,\theta)\mapsto x_u^{x^0,\theta}(t)$ depend on properties of $f$ in \eqref{eq:ODE}. For example, tight bounds can be derived if the system in \eqref{eq:ODE} is contracting \cite{Wensing2020}. Such analysis is problem-specific and left for future work.

\subsection{Robust optimization}\label{sec:applications:optimization}
Next, we apply our analysis to study the feasibility of approximations to non-convex robust programs. 
Let $\X\subset\R^m$ be a compact set, 
$\C\subseteq\R^n$ be a closed convex set, 
$\ell:\R^p\to\R$ and 
$f:\R^m\times\R^p\to\R^n$ be two continuous functions, and define the robust optimization problem
\begin{align*}
\mathbf{P}:\ 
\inf_{u\in\R^p} \ell(u)
\ \ \text{s.t.} \ \ f(x,u)\in\C \ \text{for all }x\in\X.
\end{align*}
If $\X$ is infinite (e.g., if $\X=B(0,r)$ is a ball of parameters), then \textbf{P} has an infinite number of constraints that can be approximated as follows. 
Given $M$ sampled inputs $x_i\in\X$ and a padding $\epsilon>0$, define the relaxation
\begin{align*}
\hat{\mathbf{P}}_\epsilon^M:\ 
\inf_{u\in\R^p} \ell(u)
\ \ \text{s.t.} \ \ f(x_i,u)+B(0,\epsilon)\subseteq\C \ \text{for all }i=1,\dots,M.
\end{align*}
For instance, if $\C$ is an intersection of hyperplanes $\C_j=\{y\in\R^n:n_j^\top(y-c_j)\leq 0, c_j\in\R^n, \|n_j\|=1\}$, then the constraints in $\hat{\mathbf{P}}_\epsilon^M$ are equivalent to $n_j^\top(f(x_i,u)-c_j)+\epsilon\leq 0$ for all $i=1,\dots,M$ and $j$. 
In this case, $\hat{\mathbf{P}}_\epsilon^M$ is a tractable finite-dimensional relaxation of $\mathbf{P}$. 

Thanks to Theorem \ref{thm:error_bound:smooth_boundary},  solving $\hat{\mathbf{P}}_\epsilon^M$ yields feasible solutions of $\mathbf{P}$ given sufficiently many sampled inputs $x_i$ on the boundary $\partial\X$. If $f$ is a submersion, a small sample size $M$ suffices.
\begin{corollary}\label{cor:robust_programming}
Let $r,\delta>0$, $\X\subset\R^m$ be a non-empty path-connected compact $r$-smooth set, 
$\{x_i\}_{i=1}^M\subset\partial\X$ be a $\delta$-cover of $\partial\X$,
 $f$ be such that $f_u=f(\cdot,u)$ is a $C^1$ submersion and $(f_u,\dd f_u)$ are $(\bar{L},\bar{H})$-Lipschitz for all $u\in\R^p$,  
and $\epsilon\geq\frac{1}{2}\big(\frac{\bar{L}_u}{r}+\bar{H}_u\big)\delta^2$.  
Then, any solution of $\hat{\mathbf{P}}_\epsilon^M$ is feasible for $\mathbf{P}$.
\end{corollary}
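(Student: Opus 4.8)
The plan is to fix an arbitrary feasible point $u^\star$ of $\hat{\mathbf{P}}_\epsilon^M$ (in particular, any minimizer is feasible) and prove that $f(x,u^\star)\in\C$ for every $x\in\X$, i.e.\ that $f_{u^\star}(\X)\subseteq\C$. Since $\C$ is convex and closed, this is equivalent to $\hull(f_{u^\star}(\X))\subseteq\C$, so I would carry out the whole argument at the level of convex hulls. Write $\Y_{u^\star}=f_{u^\star}(\X)$ and $\hat\Y=\hull(f_{u^\star}(Z_\delta))=\hull(\{f(x_i,u^\star)\}_{i=1}^M)$, where $Z_\delta=\{x_i\}_{i=1}^M$.

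First I would apply Theorem \ref{thm:error_bound:smooth_boundary} to the map $f_{u^\star}$. All hypotheses transfer directly: $\X$ is non-empty, path-connected, compact and $r$-smooth, $f_{u^\star}$ is a $C^1$ submersion with $(f_{u^\star},\dd f_{u^\star})$ being $(\bar L,\bar H)$-Lipschitz, and $Z_\delta$ is a $\delta$-cover of $\partial\X$. This yields
\[
\dH\big(\hull(\Y_{u^\star}),\hat\Y\big)\leq \tfrac12\big(\tfrac{\bar L}{r}+\bar H\big)\delta^2\leq\epsilon,
\]
where the last inequality is the standing assumption on $\epsilon$. By the definition of the Hausdorff distance, this gives the one-sided inclusion $\hull(\Y_{u^\star})\subseteq\hat\Y+B(0,\epsilon)$.

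Next I would convert the feasibility of $u^\star$ into an inclusion for $\hat\Y$. Feasibility of $u^\star$ for $\hat{\mathbf{P}}_\epsilon^M$ means $f(x_i,u^\star)+B(0,\epsilon)\subseteq\C$ for every $i$. Using that Minkowski sum commutes with the convex hull, namely $\hull(S)+B(0,\epsilon)=\hull\!\big(S+B(0,\epsilon)\big)$ since $B(0,\epsilon)$ is convex, together with the convexity of $\C$, I would conclude
\[
\hat\Y+B(0,\epsilon)
=\hull\Big(\textstyle\bigcup_{i}\big(f(x_i,u^\star)+B(0,\epsilon)\big)\Big)
\subseteq\hull(\C)=\C.
\]
Chaining the two inclusions gives $\hull(\Y_{u^\star})\subseteq\C$, hence $f_{u^\star}(\X)\subseteq\hull(f_{u^\star}(\X))\subseteq\C$, which is exactly feasibility of $u^\star$ for $\mathbf{P}$.

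The proof is a clean composition of Theorem \ref{thm:error_bound:smooth_boundary} with the erosion encoded by the $\epsilon$-padding, so I do not expect a deep obstacle. The step that deserves the most care is the Minkowski-sum manipulation yielding $\hat\Y+B(0,\epsilon)\subseteq\C$: it relies on the convexity of $B(0,\epsilon)$ (to pull the sum inside the hull) and on the convexity of $\C$ (to absorb the hull of a union of subsets of $\C$). A secondary point worth stating explicitly is that the constants $(\bar L,\bar H)$ are uniform in $u$ by hypothesis, so they are valid Lipschitz constants for the particular map $f_{u^\star}$ and the bound from Theorem \ref{thm:error_bound:smooth_boundary} indeed matches the lower bound imposed on $\epsilon$.
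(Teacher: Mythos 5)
Your proposal is correct and follows essentially the same route as the paper's proof: apply Theorem \ref{thm:error_bound:smooth_boundary} to $f_{u^\star}$ to get $\hull(\Y_{u^\star})\subseteq\hat\Y_{u^\star}^M+B(0,\epsilon)$, use convexity of $\C$ to turn the sampled constraints into $\hat\Y_{u^\star}^M+B(0,\epsilon)\subseteq\C$, and chain the inclusions. Your explicit justification of the Minkowski-sum/convex-hull step is a slightly more detailed rendering of the same argument the paper gives.
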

Corollary \ref{cor:robust_programming} justifies solving the relaxed problem $\hat{\mathbf{P}}_\epsilon^M$ to obtain feasible solutions of $\mathbf{P}$. The analysis of the suboptimality gap is left for future work. 
We provide an application of this result next.

\subsection{Numerical example: planning under bounded uncertainty}\label{sec:applications:robust_planning}
We consider the following optimal control problem (OCP) under bounded uncertainty:
\begin{align*}
\textbf{OCP}:\quad 
\inf_{u\in\UU} \qquad &\int_0^T\|u(t)\|^2\dd t
    &&\text{(min. fuel consumption)}
\\
\text{s.t.} \qquad &\dot{p}(t)=v(t),
\  
\dot{v}(t)=\frac{1}{m}(u(t)+F),
\ \ t\in[0,T], 
    &&\text{(dynamics)}
\\
&Hp(t)\leq h, \hspace{39mm} t\in[0,T], 
\ \ 
    &&\text{(obstacle avoidance)}
\\[1mm]
&Gp(T)\leq g, \quad (p(0),v(0))=0,
    &&\text{(initial \& final conditions)}
\\[1mm]
&(m,F)\in\X,
    &&\text{(uncertain parameters)}
\end{align*}
where $(p(t),v(t))\in\R^4$ denote the position and velocity of the system, 
$u(t)\in\R^2$ is the control input, 
$(m,F)\in\X\subset\R^3$ correspond to the uncertain mass of the system and constant disturbance, 
$H,h,G,g$ define the obstacle-free statespace and goal region, 
$T$ is the planning horizon, and 
we optimize over control trajectories $u\in\UU \subset L^2([0,T],\R^2)$ that are piecewise-constant on a partition of $[0,T]$, as is common in applications such as model predictive control \cite{Schurmann2018,Sieber2022}. The dynamics may correspond to a spacecraft system \cite{LewJansonEtAl2022} carrying an uncertain payload subject to constant disturbances. 

Although the dynamics are linear in the control input, the dynamics are nonlinear in the uncertain parameters $(m,F)$. The resulting uncertainty over the state trajectory is correlated over time, which makes solving \textbf{OCP} challenging. This contrasts with problems with additive independent disturbances for which a wide range of numerical resolution schemes exist. We refer to \cite{LewJansonEtAl2022} for a discussion of existing methods for reachability analysis of such systems.

We study the problem in  detail in Section \ref{sec:applications:robust_planning:appendix}. 
We consider the uncertainty set $(m,F)\in[30,34]\times B(0,5\cdot 10^{-3})$ and show how to outer-bound these inputs with an $r$-smooth compact set $\X$. 
Then, we show that the map $(m,F)\mapsto p_u^{m,F}(t)$ is a submersion and study its smoothness. 
The assumptions of Corollaries \ref{cor:reachability_analysis} and \ref{cor:robust_programming} hold, so we evaluate the bound on the Hausdorff distance  $\dH\big(\hull(\Y_u(t)),\hat{\Y}_u^M(t)\big)\leq \epsilon=0.025$ for $M=100$ inputs $(m_i,F_i)$, which is sufficiently accurate. 
We discretize \textbf{OCP} in time and express the finite-dimensional relaxation where constraints are only evaluated at the $M$ inputs $(m_i,F_i)$ as described in Section \ref{sec:applications:optimization}. 
We solve the resulting convex program in approximately $200\,\textrm{ms}$ (measured on a laptop with a 1.10GHz Intel Core i7-10710U CPU) using \textrm{OSQP} \cite{Stellato2020} and present results in Figure \ref{fig:traj}. As guaranteed by Corollary \ref{cor:robust_programming}, the obtained trajectory is collision-free and reaches the goal region for all uncertain parameters $(m,F)$. 
We note that $M=3300$ inputs would be necessary to provably achieve the same level of precision with a naive bound that only leverages the Lipschitzness of $p_u^{m,F}(t)$ (see Lemma \ref{lem:error_bound:covering_lipschitz}), and solving the resulting approximation of \textbf{OCP} would take over $25\,\textrm{s}$. %

\begin{figure}[t]
\centering
\includegraphics[width=0.48\linewidth, trim={4mm 2mm 2mm 2mm}, clip]{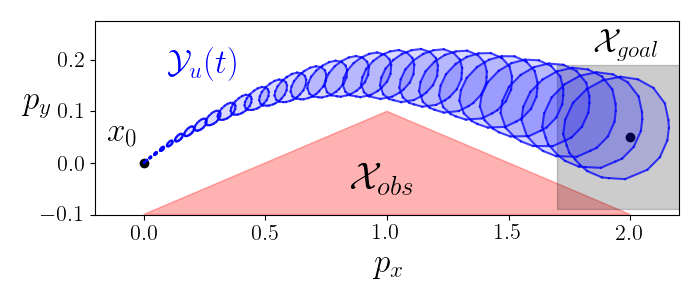}
\vspace{-2mm}
\includegraphics[width=0.48\linewidth, trim={6mm 2mm 2mm 2mm}, clip]{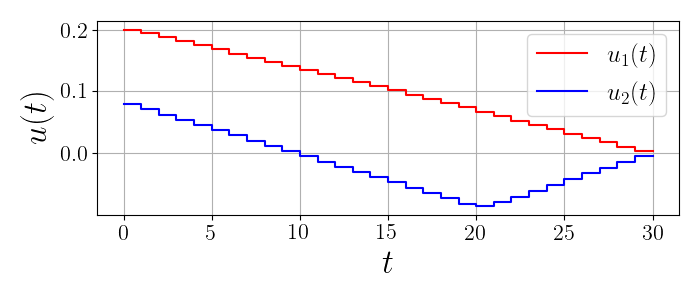}
\caption{Solution of the finite-dimensional approximation of \textbf{OCP}.}
\label{fig:traj}
\end{figure}

\section{Conclusion}\label{sec:conclusion}
We derived new error bounds for the estimation of the convex hull of the image $f(\X)$ of a set $\X$ with smooth boundary. Our results show that accurate reconstructions are possible using a few sampled inputs $x_i$ on the boundary of $\X$. We provided numerical experiments demonstrating the tightness of our bounds in practical applications. 

Of immediate interest for future research is deriving error bounds for non-convex approximations of the output set $f(\X)$ (e.g., for tangential Delaunay complexes \cite{Boissonnat2013,Aamari2018}) from assumptions on $f$ and $\X$. 
Extending the results to the presence of noise corrupting the sample  \cite{Aamari2018,Aamari2022} %
would allow reconstructing $f(\X)$ from sampled inputs that are not exactly on the boundary of $\X$. Potentially, this would also allow accurate reconstructions using approximate models of $f$. \rev{Finally, exploring whether Theorem \ref{thm:error_bound:smooth_boundary} holds under a weaker rolling ball condition (see Remark \ref{remark:rolling_ball_but_not_rsmooth}) is of interest.}
%

%
%
%

\let\oldbibliography\thebibliography
\renewcommand{\thebibliography}[1]{\oldbibliography{#1}
\setlength{\itemsep}{0pt}} %
\bibliographystyle{amsalpha}
\bibliography{ASL_papers,main}

\newcommand{\etalchar}[1]{$^{#1}$}
\newcommand{\noopsort}[1]{} \newcommand{\printfirst}[2]{#1}
  \newcommand{\singleletter}[1]{#1} \newcommand{\switchargs}[2]{#2#1}
\providecommand{\bysame}{\leavevmode\hbox to3em{\hrulefill}\thinspace}
\providecommand{\MR}{\relax\ifhmode\unskip\space\fi MR }
\providecommand{\MRhref}[2]{%
  \href{http://www.ams.org/mathscinet-getitem?mr=#1}{#2}
}
\providecommand{\href}[2]{#2}
\begin{thebibliography}{ACPLRC19}

\bibitem[Aam17]{AamariPhD2017}
E.~Aamari, \emph{Rates of convergence for geometric inference}, Ph.D. thesis,
  Universit\'e Paris-Saclay, 2017.

\bibitem[ACPLRC19]{AriasCastro2019}
E.~Arias-Castro, B.~Pateiro-Lopez, and A.~Rodriguez-Casal, \emph{Minimax
  estimation of the volume of a set under the rolling ball condition}, Journal
  of the American Statistical Association \textbf{114} (2019), no.~527,
  1162--1173.

\bibitem[AK22]{Aamari2022}
E.~Aamari and A.~Knop, \emph{Adversarial manifold estimation}, {Foundations of
  Computational Mathematics} (2022).

\bibitem[AKC{\etalchar{+}}19]{Aamari2019}
E.~Aamari, J.~Kim, F.~Chazal, B.~Michel, A.~Rinaldo, and L.~Wasserman,
  \emph{Estimating the reach of a manifold}, Electronic Journal of Statistics
  \textbf{13} (2019), no.~1, 1359--1399.

\bibitem[AL18]{Aamari2018}
E.~Aamari and C.~Levrard, \emph{Stability and minimax optimality of tangential
  {Delaunay} complexes for manifold reconstruction}, Discrete {\&}
  Computational Geometry \textbf{59} (2018), no.~4, 923--971.

\bibitem[BBC11]{Bertsimas2011}
D.~Bertsimas, D.~B. Brown, and C.~Caramanis, \emph{Theory and applications of
  robust optimization}, {SIAM Review} \textbf{53} (2011), no.~3, 464--501.

\bibitem[BC01]{Baillo2001}
A.~Baillo and A.~Cuevas, \emph{On the estimation of a star-shaped set},
  Advances in Applied Probability \textbf{33} (2001), no.~4, 717--726.

\bibitem[BDG18]{Boissonnat2018}
J.-D. Boissonnat, R.~Dyer, and A.~Ghosh, \emph{Delaunay triangulation of
  manifolds}, {Foundations of Computational Mathematics} \textbf{18} (2018),
  399--431.

\bibitem[BG13]{Boissonnat2013}
J.-D. Boissonnat and A.~Ghosh, \emph{Manifold reconstruction using tangential
  {Delaunay} complexes}, Discrete {\&} Computational Geometry \textbf{51}
  (2013), no.~1, 221--267.

\bibitem[BHB98]{Braker1998}
H.~Bräker, T.~Hsing, and N.~H. Bingham, \emph{On the {Hausdorff} distance
  between a convex set and an interior random convex hull}, Advances in Applied
  Probability \textbf{30} (1998), no.~2, 295--316.

\bibitem[BHHS21]{Berenfeld2021}
C.~Berenfeld, J.~Harvey, M.~Hoffmann, and K.~Shankar, \emph{Estimating the
  reach of a manifold via its convexity defect function}, {Discrete \&
  Computational Geometry} \textbf{67} (2021), no.~2, 403--438.

\bibitem[BLW19]{Boissonnat2019}
J.-D. Boissonnat, A.~Lieutier, and M.~Wintraecken, \emph{The reach, metric
  distortion, geodesic convexity and the variation of tangent spaces}, Journal
  of Applied and Computational Topology \textbf{3} (2019), no.~1-2, 29--58.

\bibitem[BTN98]{BenTal1998}
A.~Ben-Tal and A.~Nemirovski, \emph{Robust convex optimization}, {Mathematics
  of Operations Research} \textbf{23} (1998), no.~4, 769--805.

\bibitem[CBT{\etalchar{+}}04]{RayChaudhuri2004}
A.~Ray Chaudhuri, A.~Basu, K.~Tan, S.~Bhandari, and B.B. Chaudhuri, \emph{An
  efficient set estimator in high dimensions: consistency and applications to
  fast data visualization}, {Computer Vision and Image Understanding}
  \textbf{93} (2004), no.~3, 260--287.

\bibitem[CFLPL16]{Cholaquidis2016}
A.~Cholaquidis, R.~Fraiman, G.~Lugosi, and B.~Pateiro-L\'{o}pez, \emph{Set
  estimation from reflected brownian motion}, {Journal of the Royal Statistical
  Society: Series B} \textbf{78} (2016), no.~5, 1057--1078.

\bibitem[CGLM15]{Chazal2015}
F.~Chazal, M.~Glisse, C.~Labru{{\`e}}re, and B.~Michel, \emph{Convergence rates
  for persistence diagram estimation in topological data analysis}, {Journal of
  Machine Learning Research} \textbf{16} (2015), no.~110, 3603--3635.

\bibitem[Cot24]{Cotsakis2024}
R.~Cotsakis, \emph{Computable bounds for the reach and r-convexity of subsets
  of ${{\mathbb {R}}}^d$}, {Discrete \& Computational Geometry} (2024).

\bibitem[Cue09]{Cuevas2009}
A.~Cuevas, \emph{Set estimation: Another bridge between statistics and
  geometry}, Boletin de Estadistica e Investigacion Operativa \textbf{25}
  (2009), no.~2, 71--85.

\bibitem[DHR94]{DeHaan1994}
L.~De~Haan and S.~Resnick, \emph{Estimating the home range}, {Applied
  Probability} \textbf{31} (1994), no.~3, 700--720.

\bibitem[DVRT14]{DeVito2014}
E.~De~Vito, L.~Rosasco, and A.~Toigo, \emph{Learning sets with separating
  kernels}, Applied and Computational Harmonic Analysis \textbf{37} (2014),
  no.~2, 185--217.

\bibitem[DW80]{Devroye1980}
L.~Devroye and G.~L. Wise, \emph{Detection of abnormal behavior via
  nonparametric estimation of the support}, {SIAM Journal on Applied
  Mathematics} \textbf{38} (1980), no.~3, 480--488.

\bibitem[DW96]{Dumbgen1996}
L.~D\"{u}mbgen and G.~Walther, \emph{Rates of convergence for random
  approximations of convex sets}, Advances in Applied Probability \textbf{28}
  (1996), no.~2, 384--393.

\bibitem[EHCH21]{Everett21_journal}
M.~Everett, G.~Habibi, S.~Chuangchuang, and J.~P. How, \emph{Reachability
  analysis of neural feedback loops}, {IEEE Access} \textbf{9} (2021),
  163938--163953.

\bibitem[Fed59]{Federer1959}
H.~Federer, \emph{Curvature measures}, {Transactions of the American
  Mathematical Society} (1959), no.~93, 418--491.

\bibitem[Fol90]{Folland1990}
G.~B. Folland, \emph{Remainder estimates in {Taylor}'s theorem}, The American
  Mathematical Monthly \textbf{97} (1990), no.~3, 233--235.

\bibitem[Gon09]{Gonzalez2009}
A.~Gonz{\'{a}}lez, \emph{Measurement of areas on a sphere using fibonacci and
  latitude-longitude lattices}, Mathematical Geosciences \textbf{42} (2009),
  no.~1, 49--64.

\bibitem[JH07]{Jang2007}
W.~Jang and M.~Hendry, \emph{Cluster analysis of massive datasets in
  astronomy}, Statistics and Computing \textbf{17} (2007), no.~3, 253--262.

\bibitem[Lee12]{Lee2012}
J.~M. Lee, \emph{Introduction to smooth manifolds}, second ed., {Springer New
  York}, 2012.

\bibitem[Lee18]{Lee2018}
\bysame, \emph{Introduction to {Riemannian} manifolds}, second ed., {Springer},
  2018.

\bibitem[LJBP22]{LewJansonEtAl2022}
T.~Lew, L.~Janson, R.~Bonalli, and M.~Pavone, \emph{A simple and efficient
  sampling-based algorithm for general reachability analysis}, {Learning for
  Dynamics \& Control Conference}, 2022.

\bibitem[LMM{\etalchar{+}}20]{Leyffer2020}
S.~Leyffer, M.~Menickelly, T.~Munson, C.~Vanaret, and S.~M. Wild, \emph{A
  survey of nonlinear robust optimization}, {INFOR}: Information Systems and
  Operational Research \textbf{58} (2020), no.~2, 342--373.

\bibitem[LP20]{LewPavone2020}
T.~Lew and M.~Pavone, \emph{Sampling-based reachability analysis: A random set
  theory approach with adversarial sampling}, {Conf.\ on Robot Learning}, 2020.

\bibitem[LSH{\etalchar{+}}22]{LewEtAl2022}
T.~Lew, A.~Sharma, J.~Harrison, A.~Bylard, and M.~Pavone, \emph{Safe active
  dynamics learning and control: A sequential exploration-exploitation
  framework}, {IEEE Transactions on Robotics} \textbf{38} (2022), no.~5,
  2888--2907.

\bibitem[Mol17]{Molchanov_BookTheoryOfRandomSets2017}
I.~Molchanov, \emph{Theory of random sets}, second ed., {Springer-Verlag},
  2017.

\bibitem[NSW08]{Niyogi2008}
P.~Niyogi, S.~Smale, and S.~Weinberger, \emph{Finding the homology of
  submanifolds with high confidence from random samples}, Discrete \&
  Computational Geometry \textbf{39} (2008), no.~1, 419--441.

\bibitem[PL08]{PateiroPhD2008}
B.~Pateiro-L\'{o}pez, \emph{Set estimation under convexity type restrictions},
  Ph.D. thesis, Universidade de Santiago de Compostela, 2008.

\bibitem[Rau74]{Rauch1974}
J.~Rauch, \emph{An inclusion theorem for ovaloids with comparable second
  fundamental forms}, Journal of Differential Geometry \textbf{9} (1974),
  no.~4.

\bibitem[RCSN16]{Rodriguez2016}
A.~Rodriguez-Casal and P.~Saavedra-Nieves, \emph{A fully data-driven method for
  estimating the shape of a point cloud}, ESAIM: Probability and Statistics
  \textbf{20} (2016), no.~1, 332--348.

\bibitem[RDVVO17]{Rudi2017}
A.~Rudi, E.~De~Vito, A.~Verri, and F.~Odone, \emph{Regularized kernel
  algorithms for support estimation}, Frontiers in Applied Mathematics and
  Statistics \textbf{3} (2017), 1--15.

\bibitem[RR77]{RipleyPoissonForest1977}
B.~D. Ripley and J.~P. Rasson, \emph{Finding the edge of a poisson forest},
  {Journal of Applied Probability} \textbf{14} (1977), 483--491.

\bibitem[SBG{\etalchar{+}}20]{Stellato2020}
B.~Stellato, G.~Banjac, P.~Goulart, A.~Bemporad, and S.~Boyd, \emph{{OSQP}: an
  operator splitting solver for quadratic programs}, Mathematical Programming
  Computation \textbf{12} (2020), no.~4, 637--672.

\bibitem[Sch87]{Schneider1987}
R.~Schneider, \emph{Approximation of convex bodies by random polytopes},
  Aequationes Mathematicae \textbf{32} (1987), no.~1, 304--310.

\bibitem[Sch88]{Schneider1988}
\bysame, \emph{Random approximation of convex sets}, Journal of Microscopy
  \textbf{151} (1988), no.~3, 211--227.

\bibitem[Sch14]{Schneider2014}
\bysame, \emph{Convex bodies: The {Brunn}-{Minkowski} theory}, second ed.,
  {Cambridge Univ.\ Press}, 2014.

\bibitem[SKA18]{Schurmann2018}
B.~Sch\"urmann, N.~Kochdumper, and M.~Althoff, \emph{Reachset model predictive
  control for disturbed nonlinear systems}, {Proc.\ IEEE Conf.\ on Decision and
  Control}, 2018.

\bibitem[SW08]{Schneider2008}
R.~Schneider and W.~Weil, \emph{Stochastic and integral geometry}, Springer
  Berlin Heidelberg, 2008.

\bibitem[SZBZ22]{Sieber2022}
J.~Sieber, A.~Zanelli, S.~Bennani, and M.~N. Zeilinger, \emph{System level
  disturbance reachable sets and their application to tube-based {MPC}},
  {European Journal of Control} \textbf{68} (2022), 100680.

\bibitem[Wal97]{Walther1997}
G.~Walther, \emph{Granulometric smoothing}, The Annals of Statistics
  \textbf{25} (1997), no.~6, 2273--2299.

\bibitem[Wal99]{Walther1999}
\bysame, \emph{On a generalization of {Blaschke}'s rolling theorem and the
  smoothing of surfaces}, Mathematical Methods in the Applied Sciences
  \textbf{22} (1999), no.~4, 301--316.

\bibitem[WS20]{Wensing2020}
P.~M. Wensing and J.-J. Slotine, \emph{Beyond convexity{\textemdash}contraction
  and global convergence of gradient descent}, {PLoS ONE} \textbf{15} (2020),
  no.~8.

\end{thebibliography}

\appendix

\section{Proofs for Section \ref{sec:smooth_sets:blaschke}}\label{sec:smooth_sets:proofs}

\begin{proof}[Proof of Lemma \ref{lem:rsmooth_implies_lambdasmooth}]
Let $x\in\partial\X$ and $0<\lambda\leq R$ (the result is trivial if $\lambda=0$). A ball of radius $R$ rolls freely in $\X$; let $a\in\X$ be such that $x\in B(a,R)\subseteq\X$. Denoting by $n(x)=\frac{x-a}{\|x-a\|}$ the outward-pointing unit-norm normal of $\partial B(a,R)$ at $x$, we define $a_\lambda=x-\lambda n(x)$. %
Then, $x\in B(a_\lambda,\lambda)$ since $\|x-a_\lambda\|=\lambda$, and $n(x)$ is also the outward-pointing unit-norm normal of $\partial B(a_\lambda,\lambda)$. 
Since $\frac{1}{\lambda}\geq\frac{1}{R}$, by \cite{Rauch1974}, $B(a_\lambda,\lambda)\subseteq B(a,R)\subseteq\X$. We conclude that a ball of radius \revFirst{$\lambda$} rolls freely in $\X$. 
\end{proof}

\begin{proof}[Proof of Lemma \ref{lem:convex_rolling_outside}]
    The result follows from the supporting hyperplane theorem, see \cite[Theorem 1.3.2]{Schneider2014}.
\end{proof}

\section{Proofs for Section \ref{sec:error_bounds:bound_on_deltaTphull} (bound on $d_{T_y\partial\hull(\Y)}$)}\label{sec:error_bounds:bound_on_deltaTphull:proofs}
\subsection{Modifications for Lemma \ref{lem:bound_delta_Tphull} if $f$ is only $C^1$}\label{sec:lem:bound_delta_Tphull:proof_C1}
In this section, 
we sketch the minor modifications to the proof of Lemma \ref{lem:bound_delta_Tphull} in Section \ref{sec:error_bounds} to handle the case where $f$ is only $C^1$ (if $f$ is only $C^1$, using $\dd^2 f$ is not rigorous). The result can be justified using  Taylor's Theorem \cite{Folland1990}, which states that any $C^1$ scalar function $g:\R\to\R$ satisfies
$$
g(b)=g(a)+g'(a)(b-a)+(b-a)\int_0^1\left(g'(a+u(b-a))-g'(a)\right)\dd u.
$$
For simplicity, consider the scalar case where $f:\R\to\R$ is $C^1$ and $\gamma:\R\to\R$ is $C^2$. Then,
\begin{align*}
f(\gamma(t))-f(\gamma(0))
-t(f\circ\gamma)'(0)
&=
t\int_0^1\left((f\circ\gamma)'(ut)-(f\circ\gamma)'(0)\right)\dd u
\\
&= 
t\int_0^1\left(
\dd f_{\gamma(ut)}(\gamma'(ut))-
\dd f_{\gamma(0)}(\gamma'(0))
\right)\dd u
\\
&= 
t\int_0^1\left(
\dd f_{\gamma(ut)}(\gamma'(0))-
\dd f_{\gamma(0)}(\gamma'(0))
\right)\dd u.
\end{align*}
using the fact that $\gamma''(s)=0$ so that $\gamma'(ut)=\gamma'(0)$.

Assuming that $\dd f$ is $\bar{H}$-Lipschitz, $\|\dd f_{\gamma(ut)}(\gamma'(0))-
\dd f_{\gamma(0)}(\gamma'(0))\|\leq \bar{H}\|\gamma(ut)-\gamma(0)\|\|\gamma'(0)\|$ and
\begin{align*}
\left\|f(\gamma(t))-f(\gamma(0))
-t(f\circ\gamma)'(0)\right\|
&=
\left\|t\int_0^1\left(
\dd f_{\gamma(ut)}(\gamma'(0))-
\dd f_{\gamma(0)}(\gamma'(0))
\right)\dd u\right\|
\\
&\leq
t
\int_0^1
\bar{H}\|\gamma(ut)-\gamma(0)\|\|\gamma'(0)\|\dd u
\\
&=
\bar{H}\frac{t^2}{2}
\end{align*}
using $\gamma(ut)-\gamma(0)=t\gamma'(0)$ and $\|\gamma'(0)\|=1$.  Thus, with minor modifications, the proof of Lemma \ref{lem:bound_delta_Tphull} only requires assuming that $f\in C^1$ as claimed. 

\subsection{Proof of Lemma \ref{lem:curve_intersects_ball} (curve intersecting a ball)}\label{sec:lem:curve_intersects_ball:proof}

We first define a suitable chart to prove Lemma \ref{lem:curve_intersects_ball}. 

Let $B\subset\R^n$ be an $n$-dimensional closed ball of radius $r>0$ in $\R^n$, whose boundary $\partial B\subset\R^n$ is an $(n-1)$-dimensional submanifold of $\R^n$. We denote the normal bundle of $\partial B$ by $N(\partial B)$ %
and the outward-pointing unit-norm normal of $\partial B$ by $n^{\partial B}$, which defines a smooth frame for $N(\partial B)$. The restriction of the exponential map in $\R^n$ to the normal bundle of $\partial B$ is defined as
$$
E:N(\partial B)\to\R^n:(x,v)\mapsto x+v.
$$
Let $U\subset\R^n$ be a uniform tubular neighborhood of $\partial B$ in $\R^n$, see \cite[Theorem 5.25]{Lee2018}. Then, there is an open set
$$
V_{\delta} = \Big\{ (x,s n^{\partial B}(x)) \in N(\partial B) :  s \in (-\delta,\delta) \Big\}%
$$
for some $\delta>0$ such that the map
\begin{align*}
E : V_{\delta} \to U, \ \big(x,sn^{\partial B}(x)\big) \mapsto x + s n^{\partial B}(x) 
\end{align*}
is a diffeomorphism. Next, we define the smooth map
\begin{align*}
    \psi : \partial B \times (-\delta,\delta) \to V_{\delta}, \ (x,s) \mapsto \big(x,sn^{\partial B}(x)\big)
\end{align*}
which is a diffeomorphism since its differential is an isomorphism. Therefore, the smooth map
$$
(E\circ\psi) : \partial B \times (-\delta,\delta) \to U
$$
is a diffeomorphism. Finally, we define the following chart of $\R^n$
$$
\varphi=(E\circ\psi)^{-1} : U \to \partial B \times (-\delta,\delta).
$$
which satisfies 
$\varphi(y)=(x,s)\in\partial B\times(-\delta,\delta)$ for any $y=x+s n^{\partial B}(x)\in U$. 
Since $n^{\partial B}$ is outward-pointing, the last component  of $\varphi$ satisfies
\begin{itemize}
\item $\varphi^n(y)>0\iff y\in B^\comp$,
\item $\varphi^n(y)=0\iff y\in \partial B$,
\item $\varphi^n(y)<0\iff y\in \Int(B)$.
\end{itemize}

\begin{proof}[Proof of Lemma \ref{lem:curve_intersects_ball}]
In the following, we use the coordinates defined previously.

Define $f(t)=\varphi^n(\gamma(t))$. Since $\gamma(t)\in\Int(B)$ if and only if $f(t)=\varphi^n(\gamma(t))<0$, it suffices to prove that $f(t)<0$ for some $t\in (-\epsilon,\epsilon)$. Since $f(t)$ is smooth and $f(0)=\varphi^n(\gamma(0))=\varphi^n(p)=0$, it suffices to prove that $f'(0)<0$ or $f'(0)>0$. 

$f'(0)=\frac{\dd}{\dd t}\left(\varphi^n(\gamma(t))\right)\big|_{t=0}=\left(\dd\varphi^n_{\gamma(t)}
(\gamma'(t))\right)\Big|_{t=0}=\dd\varphi^n_p(v)$. 
Since $\dd\varphi^n_p(v)=0$ if and only if $v\in T_p\partial B$, but $v\notin T_p\partial B$ by assumption, we obtain that $f'(0)\neq 0$. This concludes the proof.
\end{proof}

\subsection{Proof of Lemma \ref{lem:tangent_mapped_to_tangent_hull} ($\dd f_x(T_x\partial\X)=T_y\partial\hull(\Y)$)}\label{sec:tangent_to_tangent}
\subsubsection{Problem definition and setup}

\begin{itemize}[leftmargin=5mm]\setlength\itemsep{0.5mm}
\item 
Let $r>0$ and $\X\subset\R^m$ be a non-empty path-connected $r$-smooth compact set. 
	\begin{itemize}[leftmargin=5mm]
		\vspace{-2mm}\setlength\itemsep{0.5mm}
	\item By Theorem \ref{thm:walther1999}, $\partial\X\subset\R^m$ is an $(m-1)$-dimensional submanifold with  unique normal $n^{\partial\X}(x)$.
\item By Theorem \ref{thm:walther1999}, an $m$-dimensional ball of radius $r>0$ rolls freely in $\X$ and $\overline{\X^{\comp}}$. 
	\vspace{-1mm}
	\end{itemize}
\item Let $f:\R^m\to\R^n$ be a smooth submersion.
\item Let $\Y=f(\X)$.
\item Let $y\in\partial\Y\cap\partial\hull(\Y)$. 
\item Let $x\in\partial\X$ be such that $y=f(x)$ (this input $x$ exists since $\partial\Y\subseteq f(\partial\X)$ by Lemmas \ref{lem:submersion_open_map} and \ref{lem:open_map_boundary} since $f$ is a submersion).  
	\begin{itemize}[leftmargin=5mm]
		\vspace{-2mm}\setlength\itemsep{0.5mm}
	\item  By the rank theorem, there exist two charts $((U,\varphi), (V,\psi))$ centered at $x$ such that $\psi\circ f\circ\varphi^{-1}$ is a coordinate projection:
$$
\psi\circ f\circ\varphi^{-1}: 
\varphi(U\cap f^{-1}(V))\to\psi(V),
\ 
(\hat{x}_1,\dots,\hat{x}_n,\hat{x}_{n+1},\dots\hat{x}_m)\mapsto (\hat{x}_1,\dots,\hat{x}_n).
$$
In other words, $(\psi\circ f\circ\varphi^{-1})|_{\varphi(U\cap f^{-1}(V))}(\cdot)=\pi(\cdot)$ is the projection from $\R^m$ to $\R^n$.
	\vspace{-2mm}
	\end{itemize}
\item Let $P=\{\hat{x}\in\R^m:\hat{x}_{n+1}=\dots=\hat{x}_m=0\}$ and $Q=\{\hat{x}\in\R^m:\hat{x}_1=\dots=\hat{x}_n=0\}$. 
\item Let $S=\varphi^{-1}\left(P\cap\varphi(U)\right)$.
\item Let $B\subseteq\X$ be a tangent ball at $x$ inside $\X$ of radius $\min(r,\reach(S))>0$ with $T_x\partial B=T_x\partial\X$. This ball exists by Theorem \ref{thm:walther1999}, and $\reach(S)>0$ since $S$ is a submanifold.
\item Let $Z=S\cap\partial B$. 
\end{itemize}
\vspace{-2mm}
We claim that $\dd f_x(T_x\partial\X)=T_y\partial\hull(\Y)$.

\vspace{3mm}

\begin{figure}[!htb]
    \centering\includegraphics[width=0.8\linewidth]{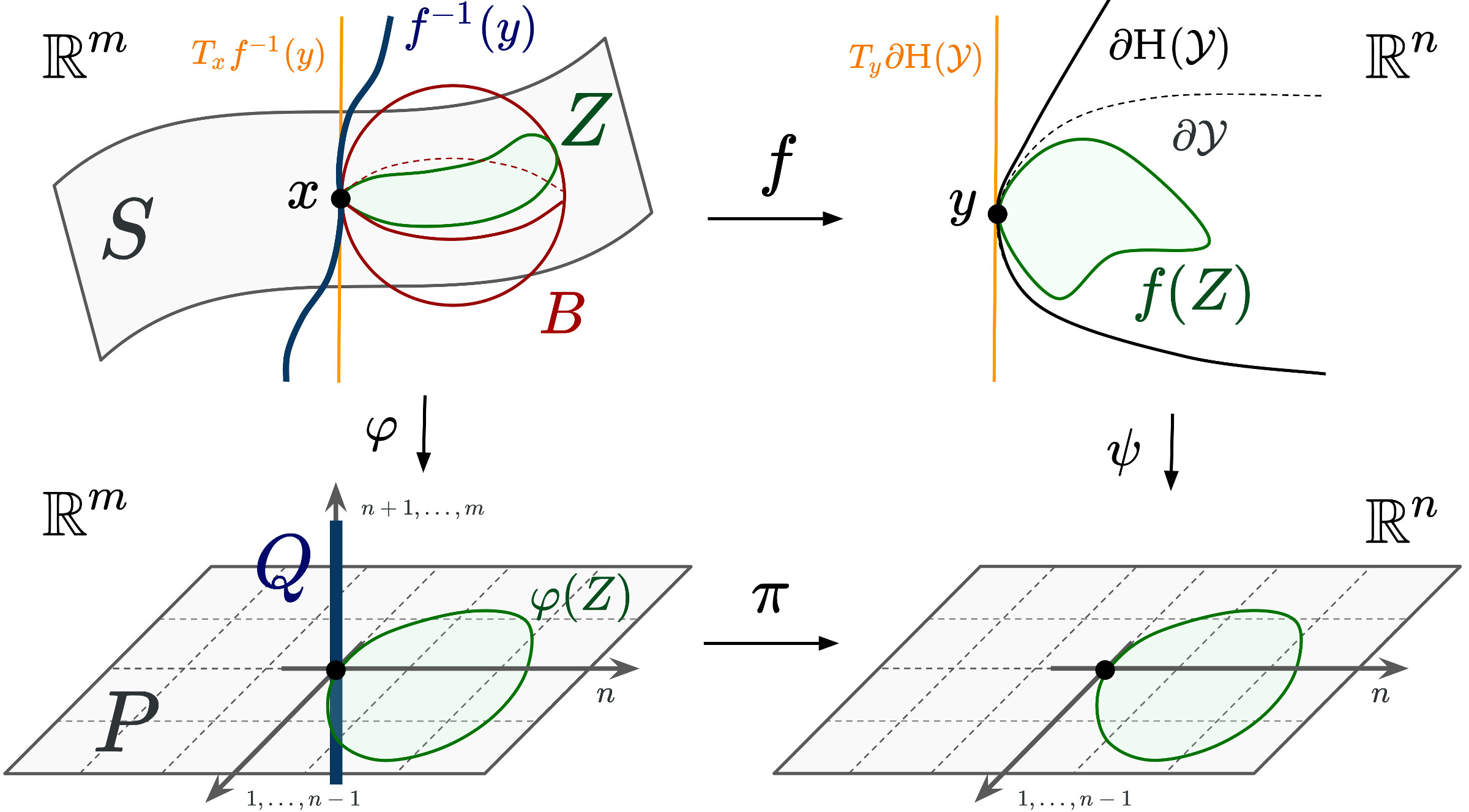}\caption{Definitions for the proof of Lemma \ref{lem:tangent_mapped_to_tangent_hull}.}
\label{fig:ift:proof_of_lem:tangent_mapped_to_tangent_hull}
    \end{figure}

\subsubsection{Properties}
First, $f^{-1}(y)\subset\R^m$ is an $(m-n)$-dimensional submanifold  and $T_xf^{-1}(y)=\Ker(\dd f_x)$ since $f$ is a submersion (see Theorem 5.12 and Proposition 5.38 in \cite{Lee2012}). The following properties are used in the proof of Lemma \ref{lem:tangent_mapped_to_tangent_hull}.

\begin{property}\label{property:2}
$\Ker(\dd f_x)=\dd(\varphi^{-1})_{\varphi(x)}\left(T_{\varphi(x)}Q\right)$. 
\end{property}

\begin{property}\label{property:3}
$T_x\R^m=T_xS+\Ker(\dd f_x)$. 
\end{property}

\begin{property}\label{property:4}
$T_xf^{-1}(y)\subseteq T_x\partial\X$. 
\end{property}

\begin{property}\label{property:5}
$Z$ is a submanifold of dimension $(n-1)$. 
\end{property}

\begin{property}\label{property:6}
$T_x\partial\X=T_xZ+ T_xf^{-1}(y)$.
\end{property}

\begin{property}\label{property:7}
$T_y\partial\hull(\Y)=T_yf(Z)$.
\end{property}

\subsubsection{Proofs of Properties \ref{property:2}-\ref{property:7}}

\begin{proof}[Proof of Property \ref{property:2}: \underline{$\Ker(\dd f_x)=\dd(\varphi^{-1})_{\varphi(x)}\left(T_{\varphi(x)}Q\right)$}] First, we show that $\dd(\varphi^{-1})_{\varphi(x)}(T_{\varphi(x)}Q)\subseteq\Ker(\dd f_x)$. 
Let $v\in\dd(\varphi^{-1})_{\varphi(x)}(T_{\varphi(x)}Q)$ and $w=\dd\varphi_x(v)\in T_{\varphi(x)}Q$. Then,  
\begin{align*}
\dd f_x(v)
&= \dd (\psi^{-1}\circ\pi\circ\varphi)_x(\dd(\varphi^{-1})_{\varphi(x)}(w))
=
\dd(\psi^{-1})_{\pi(\varphi(x))}(\dd\pi_{\varphi(x)}(w))
=
\dd(\psi^{-1})_{\pi(\varphi(x))}(0)=0
\end{align*}
since $\dd\pi_{\varphi(x)}(T_{\varphi(x)}Q)=0$ by definition of $Q$. %
Thus, $v\in\Ker(\dd f_x)$. 

Second, $\dim(\dd(\varphi^{-1})_{\varphi(x)}(T_{\varphi(x)}Q))=\dim(T_{\varphi(x)}Q)=m-n$, since $\varphi^{-1}$ is a diffeomorphism.

Thus, $\dim(\dd(\varphi^{-1})_{\varphi(x)}(T_{\varphi(x)}Q))=\dim(\Ker(\dd f_x))$ and $\dd(\varphi^{-1})_{\varphi(x)}(T_{\varphi(x)}Q)\subseteq\Ker(\dd f_x)$. 

The conclusion follows. 
\end{proof}

\begin{proof}[Proof of Property \ref{property:3}: \underline{$T_x\R^m=T_xS+\Ker(\dd f_x)$}] 
Indeed, since $\varphi^{-1}$ is a diffeomorphism, 
\begin{align*}
T_x\R^m
&= \dd(\varphi^{-1})_{\varphi(x)}(T_{\varphi(x)}\R^m)
\\
&=
\dd(\varphi^{-1})_{\varphi(x)}(T_{\varphi(x)}P+ T_{\varphi(x)}Q)
\\
&=
T_xS+ \dd(\varphi^{-1})_{\varphi(x)}(T_{\varphi(x)}Q))
\\
&=
T_xS+ \Ker(\dd f_x). &&\text{(Property \ref{property:2})}
\end{align*}

\vspace{-8mm}
\end{proof}

We observe that up to Property \ref{property:3}, we did not use the fact that $y\in\partial\Y$.
\begin{proof}[Proof of Property \ref{property:4}: \underline{$T_xf^{-1}(y)\subseteq T_x\partial\X$}] 

By contradiction, assume that $T_xf^{-1}(y)\nsubseteq T_x\partial\X$. Then, there is some $v\in T_xf^{-1}(y)$ but $v\notin T_x\partial\X=T_x\partial B$.

Let $I=(-\epsilon,\epsilon)$ and $\gamma:I\to f^{-1}(y)$ be a curve with $\gamma(0)=x$ and $\gamma'(0)=v$. By definition, this curve satisfies $f(\gamma(t))=y\in\partial\Y$ for all $t\in I$. 
However, by Lemma \ref{lem:curve_intersects_ball}, there exists $s\in I$ such that $\gamma(s)\in\Int(B)$, since $\gamma'(0)=v\notin T_x\partial B$. 
Thus, $y=f(\gamma(s))\in f(\Int(B))\subseteq f(\Int(\X))\subseteq\Int(f(\X))=\Int(\Y)$, since $f$ is an open map (since it is a submersion, see Lemma \ref{lem:submersion_open_map}). Thus, $y\in\Int(\Y)$, which is a contradiction since $y\in\partial\Y$. 
\end{proof}

\begin{proof}[Proof of Property \ref{property:5}: \underline{$Z$ is a submanifold of dimension $(n-1)$}] We proceed in three steps.

\begin{itemize}[leftmargin=5mm]\setlength\itemsep{0.5mm}
\item \textit{Step 1: The open ball $\mathring{B}$ intersects $S$.} 
First, we show that $T_x\R^m=T_xS+ T_x\partial B$. Indeed,
\begin{align*}
T_x\R^m &= 
	T_xS+\Ker(\dd f_x)&&\text{(Property \ref{property:3})}
	\\
	&=
	T_xS+T_xf^{-1}(y)%
	\\
	&\subseteq 
	T_xS+T_x\partial\X&&\text{(Property \ref{property:4})}
	\\
	&=
	T_xS+T_x\partial B.
\end{align*}
Thus, $T_x\R^m=T_xS+ T_x\partial B$. 
Since $\dim(T_x\partial B)=m-1$, this implies that there exists $v\in T_xS$ with $v\notin T_x\partial B$.

 Let $\gamma:(-\epsilon,\epsilon)\to S$ be any smooth curve with $\gamma(0)=x$ and $\gamma'(0)=v$. By Lemma \ref{lem:curve_intersects_ball}, $\gamma(t)\in\Int(B)$ for some $t\in(-\epsilon,\epsilon)$. Thus, $\gamma(t)\in S\cap\Int(B)$. We obtain that the open ball $\mathring{B}$ intersects $S$.

\item \textit{Step 2: $T_{\tilde{x}}\R^m=T_{\tilde{x}}S+T_{\tilde{x}}\partial B$ for all $\tilde{x}\in S\cap\partial B$.} 
By contradiction, assume that $T_{\tilde{x}}\R^m\neq T_{\tilde{x}}S+T_{\tilde{x}}\partial B$ for some $\tilde{x}\in S\cap\partial B$. 
Since $\dim(T_{\tilde{x}}\partial B)=m-1$, this implies that $T_{\tilde{x}}S\subseteq T_{\tilde{x}}\partial B$.  Thus, the ball $B$ is tangent to $S$ at $\tilde{x}$. 

Since $B$ has a radius smaller than $\reach(S)$ and is tangent to $S$ at $\tilde{x}$ by the above, the open ball $\mathring{B}$ does not intersect $S$ \cite[Corollary 2]{Boissonnat2019}. This contradicts \textit{Step 1}.

\item \textit{Step 3: Conclude by transversality.} By \textit{Step 2}, $\Span(T_{\tilde{x}}S,T_{\tilde{x}}\partial B)=T_{\tilde{x}}\R^m$ for all $\tilde{x}\in S\cap\partial B$. Thus, by transversality \cite[Theorem 6.30]{Lee2012}, $Z=S\cap\partial B$ is a submanifold of dimension $m-(\codim(S)+\codim(\partial B))=m-(m-n+1)=n-1$.
The conclusion follows. 
\end{itemize}\vspace{-7mm}
\end{proof}

\begin{proof}[Proof of Property \ref{property:6}: \underline{$T_x\partial\X=T_xZ+ T_xf^{-1}(y)$}]
\begin{align*}
\dim\left(\Span(
	T_xZ,\, 
	T_xf^{-1}(y)
)\right)
&=
\dim\left(\Span(
	\dd\varphi_x(T_xZ),\,  
	\dd\varphi_x(T_xf^{-1}(y))
)\right)
&&\text{($\varphi$ is a diffeomorphism)}
\\
&=
\dim\left(\Span(
	\dd\varphi_x(T_xZ),\,  
	T_{\varphi(x)}Q
)\right)
&&\text{(Property \ref{property:2})}%
\\
&\hspace{-2cm}=
\dim(\dd\varphi_x(T_xZ)) 
+\dim(T_{\varphi(x)}Q)
&&\hspace{-3cm}\text{($\dd\varphi_x(T_xZ)\subset T_{\varphi(x)}P\text{ and }T_{\varphi(x)}P\perp T_{\varphi(x)}Q$)}
\\
&\hspace{-2cm}=(n-1)+(m-n)=m-1.
\end{align*}
Thus, $\dim\left(\Span(
	T_xZ,\, 
	T_xf^{-1}(y)
)\right)=\dim(T_x\partial\X)$. Since 
$T_xZ\subseteq T_x\partial\X$ (since $Z\subseteq\partial\X$) and $T_xf^{-1}(y)\subseteq T_x\partial\X$ (Property \ref{property:4}), we obtain that  $T_x\partial\X=T_xZ+ T_xf^{-1}(y)$.
\end{proof}

We recall that  $\partial\hull(\Y)$ is a submanifold of dimension $(n-1)$ by Corollary \ref{cor:sub_hull:boundary_manifold}. Also, $f(Z)\subset\R^n$ is a submanifold, since $Z\subset S$ is a submanifold (Property \ref{property:5}) and $f|_S$ is a diffeomorphism.

\begin{proof}[Proof of Property \ref{property:7}: \underline{$T_y\partial\hull(\Y)=T_yf(Z)$}]  By contradiction, assume that $T_y\partial\hull(\Y)\neq T_yf(Z)$. Then, there exists $v\in T_yf(Z)$ such that $v\notin T_y\partial\hull(\Y)$, since $\dim(\partial\hull(\Y))=n-1=\dim(T_yf(Z))$ (note that $f|_S$ is a diffeomorphism and $Z\subset S$, so $\dim(T_yf(Z))=\dim(T_xZ)=n-1$).

Let $\tilde{B}\subset\overline{\hull(\Y)^\comp}$ be a ball outside $\hull(\Y)$ that is tangent to $\hull(\Y)$ at $y$ (such that $T_y\partial\tilde{B}=T_y\partial\hull(\Y)$). Such a ball exists since $\hull(\Y)$ is convex.

Let $\gamma:(-\epsilon,\epsilon)\to f(Z)$ be a smooth curve with $\gamma(0)=y$ and $\gamma'(0)=v$. Since $v\notin T_y\partial\tilde{B}=T_y\partial\hull(\Y)$, by Lemma \ref{lem:curve_intersects_ball},  $\gamma(s)\in\Int(\tilde{B})\subset\hull(\Y)^\comp$ for some $s\in (-\epsilon,\epsilon)$. 

However, $f(Z)\subset f(B) \subseteq f(\X)=\Y\subseteq\hull(\Y)$, so $\gamma(t)\in\hull(\Y)$ for all $t\in(-\epsilon,\epsilon)$. This is a contradiction. 
\end{proof}

\subsubsection{Proof that $\dd f_x(T_x\partial\X)=T_y\partial\hull(\Y)$}
\begin{proof} We have
\begin{align*}
\dd f_x(T_x\partial\X)
&=
\dd f_x(T_xZ+\Ker(\dd f_x))
&&\text{(Property \ref{property:6} and $T_xf^{-1}(y)=\Ker(\dd f_x)$)}
\\
&=
\dd f_x(T_xZ)
\\
&=
T_{f(x)}f(Z)
&&\text{($f|_S$ is a diffeomorphism and $Z\subset S$)}
\\
&=
T_yf(Z)
\\
&=T_y\partial\hull(\Y).
&&\text{(Property \ref{property:7})}
\end{align*}
Thus,  $\dd f_x(T_x\partial\X)=T_y\partial\hull(\Y)$.
\end{proof}

\section{Proofs and details for Section \ref{sec:applications} (applications)}\label{sec:applications:appendix}

\subsection{Geometric inference}

\subsubsection{The estimator $\hat{\Y}^M$ is a random compact set}\label{sec:random_compact_set_estimator}

Let $(\Omega,\G,\Prob)$ be a probability space such that the $x_i$ are $\G$-measurable independent random variables whose laws $\Prob_\X$ satisfy  $\Prob_\X(A)=\Prob(x_i\in A)$ for any $A\in\B(\R^m)$\footnote{For a canonical construction, 
let $\Omega=\R^m\times\mydots\times\R^m$ ($M$ times), 
$\G=\B(\R^m)\otimes\mydots\otimes\B(\R^m)$,  
$\Prob=\Prob_\X\otimes\mydots\otimes\Prob_\X$ the product measure, and %
$x=(x_1,\mydots,x_M): \Omega\rightarrow\Omega:\omega\mapsto\omega$. Then, the $x_i$ are independent and have the law $\Prob_\X$.}. 
Then, the $y_i=f(x_i)$ are $\R^n$-valued random variables, whose laws $\Prob_{\Y}$ satisfy %
$\Prob_{\Y}(B)=\Prob(y_i\in B)=\Prob_\X(f^{-1}(B))$ for any $B\in\B(\R^n)$.

The Hausdorff distance $\dH$ induces the \textit{myopic topology} on $\K$ \cite{Molchanov_BookTheoryOfRandomSets2017} with its associated generated Borel $\sigma$-algebra $\B(\K)$. As such, 
$(\K,\B(\K))$ is a measurable space, and the map 
$\hat\Y^M:(\Omega,\G)\to(\K,\B(\K))$  is a random compact set, i.e., a random variable taking values in the space of compact sets $\K$. The measurability of $\hat\Y^M$ follows from the measurability of the convex hull of a random closed set \cite[Theorem 1.3.25]{Molchanov_BookTheoryOfRandomSets2017}, and allows studying the probability of achieving a desired reconstruction accuracy with $\hat\Y^M$ (in particular, $\omega\mapsto \dH(\hull(\Y),\hat\Y^M(\omega))$ is measurable, as is taking countable intersections and unions of random compact sets \cite[Theorem 1.3.25]{Molchanov_BookTheoryOfRandomSets2017} as in the proof of Lemma \ref{lem:covering_probability}). 
Intuitively, different sampled inputs $x_i(\omega)$  induce different sampled output $y_i(\omega)$, resulting in  different approximated compact sets $\hat\Y^M(\omega)\in\K$, where $\omega\in\Omega$.  
\subsubsection{Proofs}

\begin{proof}[Proof of Lemma \ref{lem:covering_probability}] 
Let $X^M_\delta=\{x_i\}_{i{=}1}^M+B(0,\delta)$ and 
$\pi(\partial\X,X_{\delta/2}^M)
=
\sup_{x\in\partial\X}\Prob(B(x,\delta/2) \cap X^M = \emptyset)$, 
which gives the worst probability over $x\in\partial\X$ of not sampling an input $x_i$ that is $(\delta/2)$-close to some $x\in\partial\X$. 
Since the $x_i$'s are iid and by Assumption \ref{assum:sampling_density:boundary}, 
\begin{align*}
\pi(\partial\X,X_{\delta/2}^M) 
&=
\sup_{x\in\partial\X}\Prob(B(x,\delta/2) \cap X^M = \emptyset)
= \sup_{x\in\partial\X}\Prob\left(\bigcap_{i=1}^M
(x_i\notin B(x,\delta/2))\right)
\\
&=
\left(1 -
	\inf_{x\in\partial\X}\Prob_\X(B(x,\delta/2)) 
\right)^M
\leq 
\left(1 -
	\Lambda_\delta^{\partial\X}
\right)^M.
\end{align*}
Let $F_{\partial\X}\subset\partial\X$ be a minimal internal $(\delta/2)$-covering of $\partial\X$, so that $\partial\X\subset F_{\partial\X}+B(0,\delta/2)$ and the number of elements in $F_{\partial\X}$ is the internal $(\delta/2)$-covering number $N(\partial\X,\delta/2)$. Then,  
$\partial\X\nsubseteq X_\delta^M\implies
F_{\partial\X}+B(0,\delta/2)\nsubseteq X_\delta^M$ and
\begin{align*}
\Prob(\partial\X\nsubseteq X_\delta^M)
&\leq 
\Prob(F_{\partial\Y}+B(0,\delta/2)\nsubseteq X_\delta^M)
=
\Prob(F_{\partial\X}\nsubseteq X_{\delta/2}^M)
=
\Prob\bigg(\bigcup_{x\in F_{\partial\X}} x\notin X_{\delta/2}^M\bigg)
\\
&\leq 
\sum_{x\in F_{\partial\X}}\Prob(x_i\notin X_{\delta/2}^M)
=
\sum_{x\in F_{\partial\X}}
\Prob(\{x\}\cap X_{\delta/2}^M = \emptyset)
\\
&\leq 
|F_{\partial\X}| \cdot
\sup_{x\in\partial\X}\Prob(\{x\}\cap X_{\delta/2}^M = \emptyset)
=
N(\partial\X,\delta/2)\pi(\partial\X,X_{\delta/2}^M)
\\
&\leq
N(\partial\X,\delta/2)(1 -
	\Lambda_\delta^{\partial\X})^M
=
\beta_{M,\delta}^{\partial\X}.
\end{align*}
Thus, the sample $X^M=\{x_i\}_{i{=}1}^M$ is a $\delta$-cover of $\partial\X$ with probability at least $1-\beta_{M,\delta}^{\partial\X}$. 
\end{proof}

\begin{proof}[Proof of Corollary \ref{cor:conservative_finite_sample}]
By Lemma \ref{lem:covering_probability}, with probability at least $1-\beta_{M,\delta}^{\partial\X}$, the sample $X^M=\{x_i\}_{i{=}1}^M\subset\partial\X$ is an internal $\delta$-cover of $\partial\X$. The result follows from Lemma \ref{lem:error_bound:covering_lipschitz} and Theorem \ref{thm:error_bound:smooth_boundary}.
\end{proof}

\begin{proof}[\revFirst{Proof of Corollary \ref{cor:conservative_asymptotic}}]
\revFirst{Let $d=m-1$. First, since $\X$ is compact and $r$-smooth, $N(\partial\X,\delta/2)\leq C_{d,r}\delta^{-d}$ for some constant $C_{d,r}>0$ for all $\delta\leq r/2$ by \cite[Lemma 2.2]{Aamari2018} (see also \cite[Lemma 10]{Chazal2015}). Second, since $\Prob_\X$ is \rev{absolutely continuous over $\partial\X$ and its density $p(x)$ is bounded below by a strictly positive constant,} 
$\Prob_\X$ satisfies Assumption \ref{assum:sampling_density:boundary} with $\Lambda_{\delta}^{\partial\X}=\bar{C}_{d,r}\delta^d$ for some constant $\bar{C}_{d,r}>0$ for all $\delta\leq r/2$ by \cite[Lemma 9.1]{Aamari2018}. 
Let $\delta=(\frac{3}{\bar{C}_{d,r}}\frac{\log(M)}{M})^{1/d}$ with $M$ large-enough so that $\delta\leq r/2$. Let $\tilde{C}_{f,r}=(\bar{L}/r+\bar{H})/2$. Then, by Corollary \ref{cor:conservative_finite_sample}, 
$$
\dH(\hull(\Y),\hat\Y^M) > \tilde{C}_{f,r}\delta^2
= 
\tilde{C}_{f,r}\left(\frac{3}{\bar{C}_{d,r}}\right)^{2/d}\left(\frac{\log(M)}{M}\right)^{2/d}
$$
with probability less than 
\begin{align*}
\rev{\beta^{\partial\X}_{M,\delta}}=C_{d,r}\delta^{-d}(1-\bar{C}_{d,r}\delta^d)^M\leq 
C_{d,r}\delta^{-d}\exp(-M\bar{C}_{d,r}\delta^d)
&=
C_{d,r}\frac{\bar{C}_{d,r}}{3}\frac{M}{\log(M)}
\exp\left(-3\log(M)\right).
\end{align*}
Since $\sum_{M=1}^\infty\rev{\beta^{\partial\X}_{M,\delta}}<\infty$, the conclusion follows from the Borel-Cantelli lemma.}
\end{proof}

\subsection{Reachability analysis}
\begin{proof}[Proof of Corollary \ref{cor:reachability_analysis}]
The map $(x^0,\theta)\mapsto x_u^{x^0,\theta}(t)$ is a $C^1$ submersion. The result follows from Theorem \ref{thm:error_bound:smooth_boundary}.
\end{proof}

\subsection{Robust optimization}
\begin{proof}[Proof of Corollary \ref{cor:robust_programming}]
Given $u\in\R^p$, define $\Y_u=f(\X,u)$ and $\hat{\Y}_u^M=\hull(\{f(x_i,u)\}_{i=1}^M)$. Then, $\mathbf{P}$ is equivalent to
\begin{align*}
\mathbf{P}: \, \inf_{u\in\R^p} \ \ell(u)\ \ 
\text{s.t.} \ \ \Y_u\subseteq\C.
\end{align*}
By Theorem \ref{thm:error_bound:smooth_boundary}, 
$\dH(\hull(\Y_u),\hat{\Y}_u^M)\leq\epsilon$,  
which implies that $\hull(\Y_u)\subseteq\hat{\Y}_u^M+B(0,\epsilon)$.  
Since $\C$ is convex, $f(x_i,u)+B(0,\epsilon)\in\C$ for all $i=1,\dots,M$ if and only if $\hat{\Y}_u^M+B(0,\epsilon)\subseteq\C$. 
Thus, any solution $u\in\R^p$ of $\hat{\mathbf{P}}_\epsilon^M$ satisfies
$$
\Y_u\subseteq\hull(\Y_u)\subseteq\hat{\Y}_u^M+B(0,\epsilon)\subseteq\C.
$$
Thus, any solution of $\hat{\mathbf{P}}_\epsilon^M$ is feasible for $\mathbf{P}$.
\end{proof}

\subsection{Numerical example: planning under bounded uncertainty}\label{sec:applications:robust_planning:appendix}
The planning horizon is $T=30\,\textrm{s}$. Constraints are given as
\begin{align*}
\frac{n_1}{\|n_1\|}(p(t)-c_1)\leq 0\text{ for }t\in[0,20), 
\quad
&n_1=(1,-5),\ c_1=(0,-0.1),
\\
\frac{n_2}{\|n_2\|}(p(t)-c_2)\leq 0\text{ for }t\in[20,T], 
\quad
&n_2=(-1,-5),\ c_2=(2,-0.1),
\\
-\Delta p_{\textrm{goal}}\leq p(T)-p_{\textrm{goal}}\leq\Delta p_{\textrm{goal}},
\quad
&p_{\textrm{goal}}=(2,0.05), \ 
\Delta p_{\textrm{goal}}=(0.3, 0.14),
\end{align*}
The feasible control set is given by $U=\{u\in\R^2:\|u\|_\infty\leq \bar{u}_{\textrm{max}}\}$ with $\bar{u}_{\textrm{max}}=0.2$.  We optimize over a space $\U$ of stepwise-constant controls $u(t)=\sum_{s=0}^{T-1}\bar{u}_s\mathbf{1}_{[s,s+1)}(t)$ where $\bar{u}_s\in U$ for all $s=0,\dots,T-1$, and $\mathbf{1}_{[s,s+1)}(t)=1$ if $t\in[s,s+1)$ and $0$ otherwise. %

We assume that $F\in B(0,F_{\textrm{max}})\subset\R^2$ with $F_{\textrm{max}}=0.005\,\textrm{N}$ and $m\in[30,34]\,\textrm{kg}$. To obtain tighter bounds with our analysis, we make a change of variables. We define 
$M=\frac{1}{m}$, $\overline{M}=\frac{1}{32}$, $\Delta M=M-\overline{M}$, and $\gamma=\frac{9}{4}$, so that
$$
\frac{1}{m}=M=\overline{M}+\Delta M=\overline{M}+\frac{1}{\gamma}(\gamma\Delta M).
$$
One verifies that $(\gamma\Delta M)\in[-0.005,0.005]=[-F_{\textrm{max}},F_{\textrm{max}}]$. Thus, the parameters $x=(\gamma\Delta M,F)$ satisfy $x\in \X=B(0,r)\subset\R^3$ for $r=\sqrt{2}F_{\textrm{max}}$, which is a non-empty $r$-smooth compact set.

Given a piecewise-constant control $u\in\U$, the trajectory of the system is given by
$$
p_u^x(t)=
p(0)+v(0)t+
\frac{1}{2m}\left(
    Ft^2 + 
    \sum_{k=0}^{s-1}\bar{u}_k(2(t-k)-1)
    +\bar{u}_s\Delta t^2
    \right)
$$
for any time $t=s+\Delta t$ with $s\in\bN$ and $|\Delta t|<1$. %
Thus, the map $(m,F)\to p_u^{m,F}(t)$ is a submersion and Corollaries \ref{cor:robust_programming} and \ref{cor:reachability_analysis} apply. Specifically, the convex hull of the reachable positions of the system and the constraints of the problem can be accurately approximated using a finite number of inputs $(m_i,F_i)$, and sampling the boundary $\partial\X$ is sufficient.

For any $t\in\bN\cap [0,T]$ (so that $s=t$ and $\Delta t=0$) and defining $\Delta_k=2(t-k)-1$, 
\begin{align*}
p_u^x(t)
&=
p(0)+v(0)t+
\frac{1}{2m}\left(t^2F+\sum_{k=0}^{t-1}\bar{u}_k\Delta_k\right)
\\
&=
p(0)+v(0)t+
\frac{1}{2}\left(\overline{M}\left(t^2F+\sum_{k=0}^{t-1}\bar{u}_k\Delta_k\right)
+\frac{1}{\gamma}(\gamma\Delta M)\left(t^2F+\sum_{k=0}^{t-1}\bar{u}_k\Delta_k\right)
\right)
\\
&=
p(0)+v(0)t+
\frac{\overline{M}}{2}\sum_{k=0}^{t-1}\bar{u}_k\Delta_k
+
\left(\frac{\overline{M}t^2}{2}\right)F
+
\left(\frac{1}{2\gamma}\sum_{k=0}^{t-1}\bar{u}_k\Delta_k\right)(\gamma\Delta M)
+
\left(\frac{t^2}{2\gamma}\right)(\gamma\Delta M)F
\end{align*}
which is quadratic in $x=(\gamma\Delta M,F)$, so the differential $x\mapsto \dd (p_u^x(t))_{x}$ is $\bar{H}_t=(t^2/2\gamma)$-Lipschitz. 

By rearranging terms, for $x_1,x_2\in\X$,
\begin{align*}
\left\|p_u^{x_1}(t)-p_u^{x_2}(t)\right\|
&=
\bigg\|
\underbrace{\frac{1}{2\gamma}
\begin{bmatrix}
t^2(\gamma\overline{M}+(\gamma\Delta M)_2)I_{2\times2}
&
\left(\sum_{k=0}^{t-1}\bar{u}_k\Delta_k+t^2F_1\right)
\end{bmatrix}}_{A(x_1,x_2,t)}
\underbrace{\begin{bmatrix}
F_1-F_2
\\
(\gamma\Delta M)_1-(\gamma\Delta M)_2
\end{bmatrix}}_{x_1-x_2}
\bigg\|
\\
&\leq 
\|A(x_1,x_2,t)\|\|x_1-x_2\|.
\end{align*}
Since $\|A(x_1,x_2,t)\|\leq\sqrt{2}\|A(x_1,x_2,t)\|_\infty=\sqrt{2}\max_{ij}(|A_{ij}(x_1,x_2,t)|)$, we obtain
\begin{align*}
\|A(x_1,x_2,t)\|
&\leq
\frac{1}{\sqrt{2}\gamma}
\max\left(
\left|t^2(\gamma\overline{M}+(\gamma\Delta M)_2)\right|,
\ 
\left|\sum_{k=0}^{t-1}\bar{u}_{k,1}\Delta_k+t^2F_{1,1}\right|
\right)
\\
&\leq
\frac{t^2}{\sqrt{2}\gamma}
\max\left(
(\gamma M_{\textrm{max}}+F_{\textrm{max}}),
\, 
(\bar{u}_{max}+F_{\textrm{max}})
\right)
\end{align*}
where $M_{\textrm{max}}=1/30$ and since $\gamma\Delta M\in[-F_{\textrm{max}},F_{\textrm{max}}]$ and $\sum_{k=0}^{t-1}\Delta_k=t^2$. 
Defining 
$$
\bar{L}=\frac{T^2}{\sqrt{2}\gamma}
\max\left(
(\gamma M_{\textrm{max}}+F_{\textrm{max}}),
\, 
(\bar{u}_{max}+F_{\textrm{max}})
\right),
\quad
\bar{H}=\frac{T^2}{2\gamma},
$$ 
we conclude that the submersion $x\mapsto p_u^x(t)$ is $\bar{L}$-Lipschitz and its differential $x\mapsto \dd (p_u^x(t))_{x}$ is $\bar{H}$-Lipschitz for all $t\in[0,T]$. 

Let $\delta>0$ and $X^M=\{(\gamma\Delta M_i,F_i)\}_{i=1}^M\subset\partial\X$ be a $\delta$-covering of $\partial\X$. Applying Corollary \ref{cor:reachability_analysis}, 
$$
\dH\big(\hull(\Y_u(t)),\hat{\Y}_u^M(t)\big)\leq \frac{1}{2}\left(\frac{\bar{L}}{r}+\bar{H}\right)\delta^2.
$$
In contrast, a naive Lipschitz bound would give (see Corollary \ref{cor:conservative_finite_sample}) 
$$
\dH\big(\hull(\Y_u(t)),\hat{\Y}_u^M(t)\big)\leq \bar{L}\delta.
$$

$X^M$ is constructed as a Fibonacci lattice \cite{Gonzalez2009} with $M=100$ points, which gives an internal $\delta$-covering of $\partial\X$ for $\delta=10^{-3}$. We then evaluate the bound above and obtain $\dH\big(\hull(\Y_u(t)),\hat{\Y}_u^M(t)\big)\leq \epsilon$ with $\epsilon=\left(\bar{L}/r+\bar{H}\right)\delta^2/2=0.025$. We use this value of $\epsilon$ to pad the constraints as described in Section \ref{sec:applications:optimization}. We refer to our open-source implementation for further details.

\section{Prior error bounds in the convex setting}%
\label{sec:error_bounds_dumbgen_convex}
In this section, we report previous error bounds for completeness. Specifically, \cite[Theorem 1]{Dumbgen1996} gives an error bound for reconstructing convex sets with smooth boundary.  
\\[-2mm]

\noindent\begin{minipage}{0.65\linewidth}
\begin{theorem}\label{thm:dumbgen}\cite[Theorem 1]{Dumbgen1996}
Let $\X\subset\R^n$ be a convex compact set such that $\Int(\X)\neq\emptyset$. Let $R>0$. Assume that 
for any $x\in\partial\X$, there exists a unique $n(x)\in\R^n$ with $\|n(x)\|=1$ such that $y^\top n(x)\leq x^\top n(x)$ for all $y\in\X$ and 
\begin{align*}
\|n(x)-n(y)\|\leq \frac{1}{R}\|x-y\|\quad 
&\text{for all }x,y\in\partial\X.
\end{align*}
Let $\delta>0$ and $Z_\delta\subset\partial\X$  be such that $\partial\X\subset Z_\delta+B(0,\delta)$. Then,
$$
\dH(\X,\hull(Z_\delta))\leq \frac{\delta^2}{R}.
$$
\end{theorem}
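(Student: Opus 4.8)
The plan is to reduce the Hausdorff distance to a one-sided comparison of support functions and then to bound, direction by direction, the gap between a supporting hyperplane of $\X$ and the nearest sampled boundary point. Since $Z_\delta\subseteq\partial\X\subseteq\X$ and $\X$ is convex, we have $\hull(Z_\delta)\subseteq\X$; writing $h(C,u)=\sup_{y\in C}y^\top u$ for the support function, the characterization $\dH(A,B)=\sup_{\|u\|=1}|h(A,u)-h(B,u)|$ \cite{Schneider2014} together with $h(\hull(Z_\delta),u)\le h(\X,u)$ gives
\begin{equation*}
\dH(\X,\hull(Z_\delta))=\sup_{\|u\|=1}\big(h(\X,u)-h(\hull(Z_\delta),u)\big).
\end{equation*}
Fix a unit vector $u$. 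Because $\X$ is compact, convex, and has smooth boundary, the Gauss map is surjective, so there is a support point $x_0\in\partial\X$ with $n(x_0)=u$ and $h(\X,u)=x_0^\top u$. As $x_0\in\partial\X$ and $Z_\delta$ is a $\delta$-cover of $\partial\X$, I would pick $z^*\in Z_\delta$ with $\|x_0-z^*\|\le\delta$; then $h(\hull(Z_\delta),u)\ge (z^*)^\top u$, and hence $h(\X,u)-h(\hull(Z_\delta),u)\le (x_0-z^*)^\top n(x_0)$.

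The heart of the argument is to bound this last quantity by $\delta^2/R$. I would split it using the normal at the sampled point,
\begin{equation*}
(x_0-z^*)^\top n(x_0)=(x_0-z^*)^\top n(z^*)+(x_0-z^*)^\top\big(n(x_0)-n(z^*)\big).
\end{equation*}
The first term is nonpositive: since $n(z^*)$ is the supporting normal at the boundary point $z^*$ and $x_0\in\X$, convexity gives $(x_0-z^*)^\top n(z^*)\le 0$. The second term is controlled by the hypothesis that the Gauss map is $(1/R)$-Lipschitz: by Cauchy--Schwarz,
\begin{equation*}
(x_0-z^*)^\top\big(n(x_0)-n(z^*)\big)\le \|x_0-z^*\|\,\|n(x_0)-n(z^*)\|\le \frac{\|x_0-z^*\|^2}{R}\le\frac{\delta^2}{R}.
\end{equation*}
Taking the supremum over $u$ would then yield $\dH(\X,\hull(Z_\delta))\le\delta^2/R$.

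The only delicate point is this final estimate, and in particular the constant. The crude step $(x_0-z^*)^\top n(z^*)\le 0$ is exactly what costs a factor of two: under the stated hypotheses $\X$ is $R$-smooth (Theorem \ref{thm:walther1999}, condition 4 $\Rightarrow$ condition 3), so $\reach(\partial\X)\ge R$ by Lemma \ref{lem:roll_implies_reach}, and Federer's characterization (Theorem \ref{thm:reach}) applied directly at $x_0$ gives $|(x_0-z^*)^\top n(x_0)|=d_{T_{x_0}\partial\X}(x_0-z^*)\le \|x_0-z^*\|^2/(2R)\le \delta^2/(2R)$ via Lemma \ref{lem:dist_TxM_normal}. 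This sharper route would prove the stronger bound $\delta^2/(2R)$ and is precisely the improvement the present paper exploits; the elementary decomposition above is already enough for the stated $\delta^2/R$.
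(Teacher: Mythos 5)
Your proposal is correct and follows essentially the same route as the paper's proof: reduce to a support-function comparison, pick the support point $x_0$ with normal $u$, choose a $\delta$-close sample $z^*$, and split $(x_0-z^*)^\top n(x_0)$ into the nonpositive term $(x_0-z^*)^\top n(z^*)$ plus the term controlled by the $(1/R)$-Lipschitz normal. Your closing remark about recovering the sharper $\delta^2/(2R)$ via Federer's tangent-space characterization also matches the paper's own discussion of where its factor-of-two improvement comes from.
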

\end{minipage}%
\hspace{1mm}
\begin{minipage}{0.35\linewidth}
    \centering	
 \includegraphics[width=0.8\linewidth]{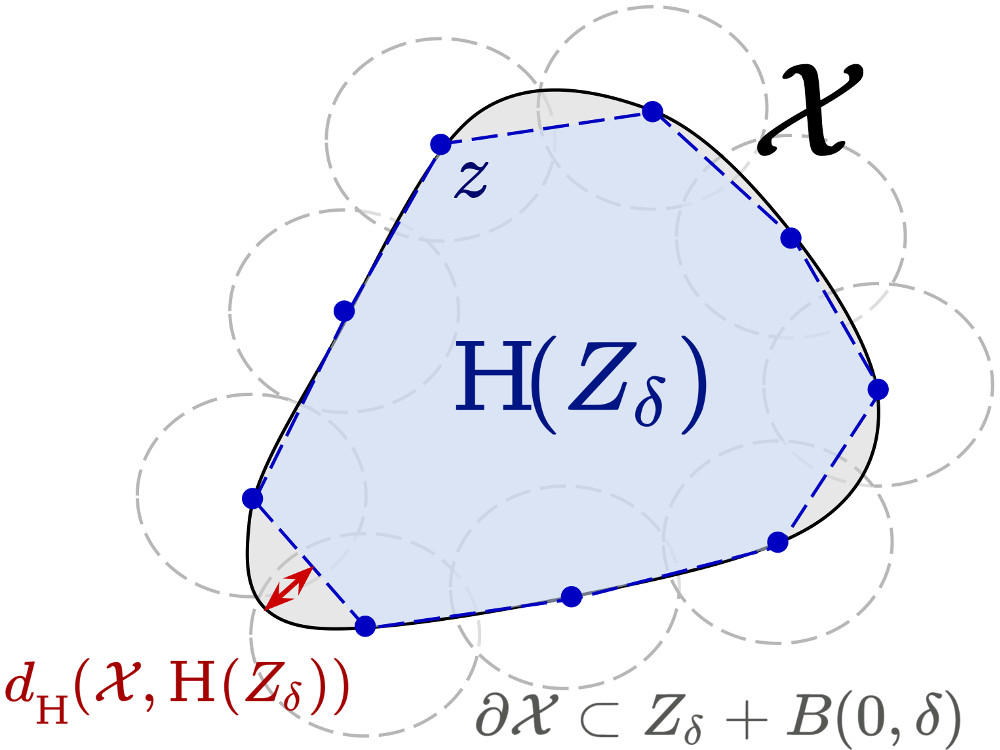}
\captionof{figure}{Error bound in the convex setting \cite[Theorem 1]{Dumbgen1996}.}
\end{minipage}%

For completeness, we provide a proof of this result at the end of this section. 
Theorem \ref{thm:dumbgen} implies that convex sets with smooth boundaries (i.e., with a Lipschitz-continuous normal vector field) are accurately reconstructed using the convex hull of points on the boundary. Theorem \ref{thm:error_bound:smooth_boundary} improves this error bound by a factor of $2$.

By assuming that $\Y=f(\X)$ is convex and $f$ is a diffeomorphism, Theorem \ref{thm:dumbgen} can be used to derive an error bound for the estimation of the image of sets $\X$ with smooth boundary. In contrast, Theorem \ref{thm:error_bound:smooth_boundary} does not require a convexity assumption and applies to submersions $f:\R^m\to\R^n$ as well, which allows studying problems where the dimensionality $m$ of the input set $\X$ is larger than the dimensionality $n$ of the output set $\Y$.

\begin{corollary}\label{cor:dumbgen} Let $r>0$, $\X\subset\R^n$ be a non-empty path-connected compact set, $f:\R^n\to\R^n$, and $\Y=f(\X)$. 
Let $\delta>0$ and $Z_\delta\subset\partial\X$  be such that $\partial\X\subset Z_\delta+B(0,\delta)$. Assume that 
$\X$ is $r$-smooth,  
$f$ is a $C^1$ diffeomorphism such that $(f,f^{-1},\dd f)$ are $(\bar{L},\underline{L},\bar{H})$-Lipschitz, and 
$\Y$ is convex. 
Then,
$$
\dH(\Y,\hull(f(Z_\delta)))\leq \frac{(\bar{L}\delta)^2}{R},
\quad \text{
where }R = \frac{1}{\left(\frac{\bar{L}}{r}+\bar{H}\right)\underline{L}^2}.
$$
\end{corollary}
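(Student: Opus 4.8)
The plan is to verify that the image set $\Y=f(\X)$ satisfies every hypothesis of D\"umbgen's Theorem \ref{thm:dumbgen} and then invoke that theorem directly, with the cover radius rescaled by the Lipschitz constant $\bar{L}$ of $f$. The key simplification comes from assumption (A3): since $\Y$ is convex, $\Y=\hull(\Y)$, so there is no gap between the set and its convex hull and the bound of Theorem \ref{thm:dumbgen} applies to $\Y$ itself rather than to $\hull(\Y)$.

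First I would establish the regularity of $\Y$. By (A1)--(A2), Corollary \ref{cor:rsmooth:diffeo} gives that $\Y$ is $R$-smooth with $R^{-1}=(\bar{L}/r+\bar{H})\underline{L}^2$ and that $\partial\Y$ is an $(n-1)$-dimensional submanifold whose outward-pointing unit-norm normal $n(\cdot)$ is $(1/R)$-Lipschitz, i.e.\ $\|n(x)-n(y)\|\leq\frac{1}{R}\|x-y\|$ for all $x,y\in\partial\Y$. Since $\Y$ is convex, this submanifold normal coincides with the supporting-hyperplane normal, so for each $x\in\partial\Y$ the vector $n(x)$ is the unique unit vector satisfying $y^\top n(x)\leq x^\top n(x)$ for all $y\in\Y$. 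I would also note that $\Int(\Y)\neq\emptyset$, which follows from the $R$-smoothness of $\Y$ via Theorem \ref{thm:walther1999}. This verifies all the hypotheses of Theorem \ref{thm:dumbgen} for the convex compact set $\Y$ with constant $R$.

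Next I would rescale the boundary cover. Because $f$ is a diffeomorphism, $\partial\Y=f(\partial\X)$, whence $f(Z_\delta)\subseteq f(\partial\X)=\partial\Y$. For any $y=f(x)\in\partial\Y$ with $x\in\partial\X$, the $\delta$-cover property $\partial\X\subseteq Z_\delta+B(0,\delta)$ furnishes some $z\in Z_\delta$ with $\|x-z\|\leq\delta$; the $\bar{L}$-Lipschitzness of $f$ then yields $\|y-f(z)\|\leq\bar{L}\|x-z\|\leq\bar{L}\delta$ with $f(z)\in f(Z_\delta)$. Hence $f(Z_\delta)$ is an $(\bar{L}\delta)$-cover of $\partial\Y$, that is, $\partial\Y\subseteq f(Z_\delta)+B(0,\bar{L}\delta)$.

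Finally, I would apply Theorem \ref{thm:dumbgen} to $\Y$ with the boundary sample $f(Z_\delta)\subseteq\partial\Y$ and cover radius $\bar{L}\delta$, obtaining $\dH(\Y,\hull(f(Z_\delta)))\leq(\bar{L}\delta)^2/R$, exactly as claimed. The one delicate point is the identification, in the first step, of the submanifold normal of $\partial\Y$ with the supporting-hyperplane normal demanded by Theorem \ref{thm:dumbgen}; this is precisely where convexity (A3) is indispensable, since for a non-convex $R$-smooth $\Y$ the supporting-hyperplane characterization of $n(x)$ would fail and one could at best bound $\dH$ against $\hull(\Y)\neq\Y$ rather than against $\Y$ itself.
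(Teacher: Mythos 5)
Your proposal is correct and follows essentially the same route as the paper's proof: invoke Corollary \ref{cor:rsmooth:diffeo} to get $R$-smoothness of $\Y$, use Theorem \ref{thm:walther1999} for the $(1/R)$-Lipschitz normal and nonempty interior, observe that $f(Z_\delta)$ is an $(\bar{L}\delta)$-cover of $\partial\Y$, and apply Theorem \ref{thm:dumbgen}. Your extra care in identifying the submanifold normal with the supporting-hyperplane normal via convexity is a point the paper only notes parenthetically, but it is the same argument.
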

\begin{proof}
Let $R=\left(\left(\frac{\bar{L}}{r}+\bar{H}\right)\underline{L}^2\right)^{-1}$. 
By Corollary \ref{cor:rsmooth:diffeo}, $\Y$ is $R$-smooth\rev{, since $\X$ is $r$-smooth and $f$ is a diffeomorphism}. 

Thus, by Theorem \ref{thm:walther1999} (note that $\Int(\Y)\neq\emptyset$ since $\Int(\X)\neq\emptyset$ and $f$ is bijective), $\partial\Y$ is an $(n-1)$-dimensional submanifold in $\R^n$ with the outward-pointing normal $n(x)$ at $x\in\partial\Y$ satisfying 
$\|n(x)-n(y)\|\leq\frac{1}{R}\|x-y\|$ for all $\forall x,y\in\partial\Y
$ (and $y^\top n(x)\leq x^\top n(x)$ for all $y\in\Y$).

Since $Z_\delta$ is a $\delta$-cover of $\partial\X$ and $f$ is $\bar{L}$-Lipschitz, $f(Z_\delta)$ is an $(\bar{L}\delta)$-cover of $\partial\Y$. 

The result follows from Theorem \ref{thm:dumbgen} using the last two results. 
\end{proof}

\begin{proof}[Proof of Theorem \ref{thm:dumbgen} \cite{{Dumbgen1996}}]
First, we define the support function of $\X$ as 
$h(\X,\cdot):\R^n\to\R, \ u\mapsto h(\X,u):=\sup_{x\in \X}x^\top u$.  
Since $\X$ and $\hull(Z_\delta)$ are both convex, non-empty, and compact, by \cite[Lemma 1.8.14]{Schneider2014}, 
\begin{align*}
\dH(\X,\hull(Z_\delta))
&=
\max_{u\in\partial B(0,1)}|h(\X,u)-h(\hull(Z_\delta),u)|
=
|h(\X,u_0)-h(\hull(Z_\delta),u_0)|
\\
&=
\bigg|\sup_{x\in\X}x^\top u_0-\sup_{z\in \hull(Z_\delta)}z^\top u_0\bigg|
\end{align*}
for some $u_0\in\R^n$ with $\|u_0\|=1$. 
Let $x_0\in\partial\X$ be such that $h(\X,u_0)=x_0^\top u_0$. Then, $u_0=n(x_0)$. Indeed, by assumption, $n(x_0)$ is the unique vector in $\partial B(0,1)$ that satisfies $x^\top n(x_0)\leq x_0^\top n(x_0)=h(\X,u_0)$ for all $x\in\X$.
Thus, 
\begin{align*}
\dH(\X,\hull(Z_\delta))
&=
\bigg|x_0^\top n(x_0)-\sup_{z\in \hull(Z_\delta)}z^\top n(x_0)\bigg|
=
\bigg|\inf_{z\in Z_\delta} (x_0-z)^\top n(x_0)\bigg|
=
\bigg|(x_0-z)^\top n(x_0)\bigg|
\end{align*}
for some $z\in Z_\delta\subset\partial\X$ with $\|x_0-z\|\leq\delta$. %
In addition,
\begin{align*}
(x_0-z)^\top n(x_0)&\geq 0, 
\hspace{4.2cm}
\text{(since $z\in\X$)}
\\
\text{and}\qquad 
(x_0-z)^\top n(x_0)&=(x_0-z)^\top (n(x_0)-n(z))+(x_0-z)^\top n(z)
\\
&\leq (x_0-z)^\top (n(x_0)-n(z)), \quad \text{(since $z\in\partial\X$)}.
\end{align*}
Thus,
\begin{align*}
\dH(\X,\hull(Z_\delta))
&=
\bigg|(x_0-z)^\top n(x_0)\bigg|
\\
&=
(x_0-z)^\top n(x_0)
\\
&\leq 
(x_0-z)^\top (n(x_0)-n(z))
\\
&\leq 
\|x_0-z\| \|n(x_0)-n(z)\|
\\
&\leq 
\delta^2/R,
\end{align*}
where the last inequality follows by assumption.
\end{proof}

\end{document}